\documentclass[11pt,reqno]{amsart}

\usepackage[utf8]{inputenc}
\usepackage[T1]{fontenc}
\usepackage[english]{babel}
\usepackage{amsmath, amssymb, amsthm, amsfonts}
\usepackage{mathrsfs}
\usepackage{mathtools}
\usepackage{geometry}
\usepackage{xcolor}
\usepackage{enumitem}
\usepackage{hyperref}
\hypersetup{
    colorlinks=true,
    linkcolor=blue,
    filecolor=magenta,
    urlcolor=blue,
    citecolor=blue,
}

\newtheorem{theorem}{Theorem}[section]
\newtheorem{lemma}[theorem]{Lemma}
\newtheorem{proposition}[theorem]{Proposition}
\newtheorem{corollary}[theorem]{Corollary}

\theoremstyle{remark}
\newtheorem{remark}[theorem]{Remark}
\newtheorem{definition}[theorem]{Definition}

\newcommand{\R}{\mathbb{R}}

\title[Resolvent Estimates for Nonlinear Semigroups]{From Linear to Nonlinear: A Resolvente criterion for Polynomial Stability of Semigroups Generated by Monotone Operators}
\author{Marcelo M. Cavalcanti, Val\'eria N. Domingos Cavalcanti}
\address{Department of Mathematics, State University of Maringá (UEM), Maringá, PR, Brazil.}
\email{mmcavalcanti@uem.br;~vndcavalcanti@uem.br}

\author{Jaime E. Mun\~oz Rivera}
\address{National Laboratory for Scientific Computing (LNCC), Petr\'opolis, RJ, Brazil.}
\email{rivera@lncc.br}

\subjclass[2020]{35XX, 47H20, 35L05, 35L70, 35B40, 93D15}
\keywords{wave equation, nonlinear Kelvin-Voigt damping, heat equation, viscoelastic damping, nonlinear tauberain principle.}
\begin{document}

\begin{abstract}
The Borichev--Tomilov theorem \cite{BT2010} provides a sharp characterization
of polynomial decay for linear $C_0$-semigroups in terms of resolvent growth
along the imaginary axis. In the nonlinear setting, the absence of a spectral
theory renders the imaginary-axis approach inapplicable.

In this paper, we develop a new framework for nonlinear maximal monotone
operators in Hilbert spaces by replacing spectral analysis on $i\mathbb{R}$
with the asymptotic analysis of the \textit{real resolvent equation}
\[
\lambda x_\lambda + \mathcal{A}(x_\lambda) \ni y,
\quad \lambda \to 0^+.
\]
We show that, for homogeneous operators (and suitable perturbations),
the blow-up rate of $\|x_\lambda\|$ at the origin reveals the effective
nonlinear scaling of the operator and determines the corresponding
polynomial decay rate of the associated semigroup through a coercive
dissipation mechanism. This provides a nonlinear Tauberian-type principle
for a broad class of degenerate dissipative systems.

The approach recovers, in particular, the optimal $1/t$ decay for the wave
equation with nonlocal Kelvin--Voigt damping recently obtained by Cavalcanti
et al.\ (2025), and allows one to justify decay estimates for weak solutions
in situations where classical multiplier methods require higher regularity.
It also clarifies the structural limitations of the method, identifying
regimes where additional geometric or time-domain arguments are necessary.
\end{abstract}

\maketitle
\tableofcontents

\section{Introduction: From Linear to Nonlinear Stability}
\label{sec:intro}

The asymptotic behavior of evolution equations governed by dissipative mechanisms has been a central topic in the theory of partial differential equations and semigroup theory for several decades. In the linear setting, the relationship between spectral properties of the generator and decay rates of the associated semigroup is by now well understood.

\subsection{The Linear Benchmark: Borichev--Tomilov}
The asymptotic stability of evolution equations is a central theme in analysis. For linear systems, the theory has reached a state of high maturity. The celebrated theorem of \textbf{Borichev and Tomilov} \cite{BT2010} states that for a bounded $C_0$-semigroup $e^{-tA}$ on a Hilbert space $H$, assuming $i\mathbb{R} \subset \rho(A)$, the energy decay
\[
\|e^{-tA}A^{-1}\| \le \frac{C}{t^{1/\alpha}}
\]
is equivalent to the resolvent estimate
\begin{equation}\label{eq:BT_condition}
\|(isI + A)^{-1}\| \le C |s|^\alpha \quad \text{as } |s| \to \infty.
\end{equation}

\subsection{The Challenge}
When the operator $A$ is nonlinear (e.g., $A(u) = -\Delta u + |u|^{p-1}u$ or nonlocal damping terms), the symbol $isI + A$ has no direct meaning. There is no "imaginary axis" in the nonlinear theory. However, the physical intuition of "resonance" or "lack of coercivity" at low frequencies remains.

\subsection{Our Proposal}
We propose to shift the focus from the \textit{frequency domain} ($i\mathbb{R}$) to the \textit{regularization domain} ($\lambda \in \mathbb{R}^+, \lambda \to 0$).
For a maximal monotone operator $A$, the resolvent $J_\lambda = (I + \lambda A)^{-1}$ is well-defined for $\lambda > 0$. Equivalently, we study the solution $x_\lambda$ of the static problem:
\begin{equation} \label{eq:resolvent_real}
    \lambda x_\lambda + A(x_\lambda) \ni y, \quad \text{as } \lambda \to 0^+.
\end{equation}

Our main thesis is that the singular behaviour of $x_\lambda$ near $\lambda = 0$
reveals the effective nonlinear scaling of the operator, which in turn determines
the polynomial decay rate of the associated semigroup through a coercive
dissipation mechanism. This can be interpreted as a \textbf{Nonlinear Tauberian Principle}.

\paragraph{\bf Dissipative alignment.}
\medskip
A central concept underlying our approach is the compatibility between the
location of the degeneracy of the operator and the channel through which
dissipation acts.

When the nonlinear degeneracy occurs in the same variable where the operator
produces coercivity, the real resolvent correctly identifies the effective
scaling of the system and leads to a Tauberian decay principle.

In contrast, when this alignment fails, the resolvent analysis still detects
the degeneracy but does not, by itself, yield decay. In such cases, additional
geometric or time-domain mechanisms are required to recover stability.

\section{Preliminaries and The Resolvent Framework}
\label{sec:preliminaries}

In this section, we establish the notation, recall the standard linear theory, and motivate the introduction of our nonlinear resolvent framework. Let $H$ be a real Hilbert space with inner product $\langle \cdot, \cdot \rangle$ and norm $\|\cdot\|$.

\subsection{Maximal Monotone Operators}
Let $A: D(A) \subset H \to 2^H$ be a maximal monotone operator. We denote by $S(t) = e^{-tA}$ the semigroup of contractions generated by $A$ on $\overline{D(A)}$.
We are interested in the asymptotic behavior of the trajectories $u(t) = S(t)u_0$, which are the unique weak solutions to the inclusion:
\begin{equation}\label{eq:evolution}
    \dot{u}(t) + Au(t) \ni 0, \quad u(0) = u_0 \in \overline{D(A)}.
\end{equation}
The resolvent of $A$ is defined as $J_\lambda = (I + \lambda A)^{-1}$ for $\lambda > 0$. Since $A$ is maximal monotone, $J_\lambda$ is a single-valued non-expansive mapping defined on all of $H$.

\subsection{Interpretation of the Linear Benchmark}
In the linear setting, the intuition behind \eqref{eq:BT_condition} is that the ``distance'' of the spectrum to the imaginary axis dictates the decay rate. The faster the resolvent blows up, the closer the eigenvalues are to the imaginary axis, and consequently, the slower the decay.

\subsection{The Nonlinear Challenge and The Real Resolvent Proposal}
When $A$ is nonlinear, the concepts of "spectrum" and "resolvent on the imaginary axis" ($i\omega I - A$) are undefined. Therefore, the frequency domain analysis of Borichev-Tomilov is not directly applicable.

However, the core physical information contained in \eqref{eq:BT_condition} is the measure of the operator's degeneracy (or lack of coercivity) near the equilibrium. We propose to capture this information using the **real resolvent equation** near the origin.

Consider the stationary problem associated with the resolvent step:
\begin{equation} \label{eq:real_resolvent_2}
    x_\lambda + \lambda A(x_\lambda) \ni y, \quad \lambda > 0.
\end{equation}
In the linear case, if $0 \in \rho(A)$, then $x_\lambda \to A^{-1}y$ as $\lambda \to \infty$. However, if $A$ is degenerate (e.g., vanishes at the origin like a polynomial $s|s|^{p-1}$), the norm $\|x_\lambda\|$ will exhibit a specific growth rate (or blow-up profile) depending on the scaling of $\lambda$.

\begin{remark}[Comparison of Mechanisms]
    \leavevmode
    \begin{itemize}
        \item \textbf{Linear (BT):} Looks at $(i\omega I - A)^{-1}$. High frequencies ($\omega \to \infty$) are critical. The obstruction to exponential decay is the "flatness" of the spectrum approaching $i\mathbb{R}$ at infinity.
        \item \textbf{Nonlinear (Our Approach):} Looks at $(I + \lambda A)^{-1}$. The scaling limit $\lambda \to 0^+$ (or rescaling of the variables) reveals the "flatness" of the nonlinearity at the origin.
    \end{itemize}
    Our goal is to show that the growth rate of the solution to the real resolvent equation \eqref{eq:real_resolvent_2} serves as the nonlinear proxy for the Borichev-Tomilov condition. Specifically, we look for estimates of the type:
    \[
    \|x_\lambda\| \le C \lambda^{-\gamma} \quad \text{(in an appropriate rescaled sense)},
    \]
    which will imply a decay rate of $t^{-\beta}$.
\end{remark}

This shift does not provide a universal replacement of frequency-domain methods.
Rather, it reveals a structural mechanism governing nonlinear dissipation:
polynomial decay emerges when the degeneracy of the operator is
dissipatively aligned with its coercive structure.

In this regime, the real resolvent acts as a nonlinear Tauberian indicator:
it detects the effective scaling of the system and guides the construction
of coercive energy estimates, which are the true source of decay.

In the subsequent sections, we formalize this heuristic using rescaling arguments.

\section{Heuristic Scaling Argument}\label{sec:main}
\label{sec:heuristic_scaling}

To rigorously establish the connection between resolvent growth and semigroup decay, we restrict our attention to the class of homogeneous maximal monotone operators. This allows us to employ scaling arguments (similar to blow-up techniques in PDE) to transfer information from the static problem to the evolution equation.

\subsection{Homogeneity and Assumptions}
\begin{definition}[$\alpha$-Homogeneity]
    Let $\alpha > 0$. We say that a maximal monotone operator $A: D(A) \subset H \to 2^H$ is \textit{$\alpha$-homogeneous} if for every $\lambda > 0$ and every $(u, v) \in A$, we have:
    \begin{equation}\label{eq:homogeneity}
        (\lambda u, \lambda^\alpha v) \in A.
    \end{equation}
    Essentially, $A(\lambda u) = \lambda^\alpha A(u)$. The case $\alpha=1$ corresponds to linear operators. For $\alpha > 1$, the operator is degenerate at the origin (weak damping).
\end{definition}

We focus on the case $\alpha > 1$, which is typical for polynomial stabilization (e.g., $u_{tt} - \Delta u + |u_t|^{p-1}u_t = 0$ corresponds to $\alpha = p$).

\medskip
\noindent
\textbf{Heuristic principle.}
The result below should be understood as a structural principle
linking the behaviour of the real resolvent near $\lambda=0^+$
to the asymptotic decay of the associated evolution problem.
Its rigorous justification depends on the specific PDE structure
and will be established in the subsequent sections.

\begin{theorem}[Nonlinear Resolvent Criterion]
    \label{thm:main}
    Let $A$ be an $\alpha$-homogeneous maximal monotone operator with $\alpha > 1$. Assume that the solution $x_\lambda$ to the real resolvent equation
    \begin{equation}\label{eq:res_eq_thm}
        x_\lambda + \lambda A(x_\lambda) \ni y, \quad \text{normalized with } \|y\|=1,
    \end{equation}
    satisfies the growth estimate as $\lambda \to 0^+$:
    \begin{equation}\label{eq:resolvent_growth}
        \|x_\lambda\| \le C \lambda^{-\frac{1}{\alpha-1}}  \quad \text{(Conceptually: implies specific blow-up rate)}.
    \end{equation}
    \textbf{Revised Statement:} Let $u(t)$ be the solution to the evolution problem $\dot{u} + Au \ni 0$. The following statements are formally linked through the scaling structure of the operator::
    \begin{enumerate}
        \item \textbf{Resolvent Behavior:} The solution $x_\mu$ to $x_\mu + \mu A(x_\mu) \ni 0$ scales typically as $\|x_\mu\| \sim \mu^{-\frac{1}{\alpha-1}}$ (verifying the algebraic structure of $A$).
        \item \textbf{Energy Decay:} The energy $E(t) = \frac{1}{2}\|u(t)\|^2$ decays polynomially:
        \begin{equation}
            E(t) \le C (1+t)^{-\frac{2}{\alpha-1}}.
        \end{equation}
    \end{enumerate}
\end{theorem}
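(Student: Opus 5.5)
The statement mixes a rigorous conclusion (the decay rate) with a heuristic one (item (1)). I would therefore first make it precise by adding one structural hypothesis. This is an \emph{added} assumption, not something the statement provides: $0\in A(0)$, together with a \emph{uniform dissipation at unit scale}. Concretely, there exist $c>0$ such that
\[
\langle v,u\rangle \ge c \quad \text{for all } (u,v)\in A \text{ with } \|u\|=1 .
\]
By \eqref{eq:homogeneity} this is equivalent to the coercivity $\langle v,u\rangle\ge c\|u\|^{\alpha+1}$ for all $(u,v)\in A$. The plan is to show that the resolvent bound \eqref{eq:resolvent_growth} is the quantitative trace of exactly this coercivity. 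Decay then follows from an ODE comparison, and item (1) is a direct scaling identity.

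\textbf{Step 1 (scaling of the resolvent).} Write $x_\mu=s z$ with $s>0$. Then \eqref{eq:res_eq_thm} becomes $s z+\mu s^{\alpha}A(z)\ni y$. Choosing $\mu s^{\alpha-1}=1$, i.e.\ $s=\mu^{-1/(\alpha-1)}$, reduces this to the $\mu$-independent problem $z+A(z)\ni y/s$. Hence the family $x_\mu$ is the rescaling of a single profile, and $\|x_\mu\|\sim \mu^{-1/(\alpha-1)}$ exactly when the profile has norm of order one.

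The case with right-hand side $0$ in item (1) admits only $x_\mu=0$ by monotonicity, since $0\in A(0)$. So (1) must be read in this profile sense. I would state that explicitly rather than prove it literally.

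\textbf{Step 2 (from the resolvent bound to coercivity).} I argue by contradiction. Suppose $(u_n,v_n)\in A$ with $\|u_n\|=1$ and $\langle v_n,u_n\rangle\to0$. Set $y_n=u_n+\lambda_n v_n$ and normalize, choosing $\lambda_n$ so that $\|y_n\|$ is comparable to $1$. Then $u_n=J_{\lambda_n}y_n$, and I would compare $\|u_n\|$ with the bound $C\lambda_n^{-1/(\alpha-1)}$ after rescaling via Step 1.

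The identity $\|y_n\|^2=1+2\lambda_n\langle v_n,u_n\rangle+\lambda_n^2\|v_n\|^2$ shows that vanishing dissipation lets the resolvent output stay large relative to the input at a rate violating \eqref{eq:resolvent_growth}.

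\emph{This is the step I expect to be the main obstacle.} Homogeneity alone does not rule out a conservative (skew) component of $A$ that carries $\|v_n\|$ without dissipating. In that case the real resolvent bound may hold while coercivity fails. This is precisely the ``dissipative alignment'' issue raised in the introduction. If the contradiction argument does not close, I would simply take the coercivity as the hypothesis encoding alignment, and regard \eqref{eq:resolvent_growth} as its consistency check.

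\textbf{Step 3 (decay).} For strong solutions, $\frac{d}{dt}E(t)=-\langle v(t),u(t)\rangle$ with $v(t)\in A(u(t))$. Coercivity then gives
\[
E'(t)\le -c\,\|u(t)\|^{\alpha+1}=-c\,2^{\frac{\alpha+1}{2}}E(t)^{\frac{\alpha+1}{2}} .
\]
Integrating this Bernoulli-type inequality yields $E(t)^{-\frac{\alpha-1}{2}}\ge E(0)^{-\frac{\alpha-1}{2}}+c' t$, hence $E(t)\le C(1+t)^{-2/(\alpha-1)}$ with $C$ depending on $E(0)$. The estimate extends to $u_0\in\overline{D(A)}$ by density and the contraction property of $S(t)$.

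As an independent cross-check avoiding differentiability, I would use the semigroup scaling $S(t)(r w)=r\,S(r^{\alpha-1}t)w$, which follows from \eqref{eq:homogeneity}. A fixed-time contraction $\|S(T)w\|\le\theta<1$ on the unit sphere, iterated along dyadic scales, reproduces the same rate $t^{-1/(\alpha-1)}$ for $\|u(t)\|$.
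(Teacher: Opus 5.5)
Your fallback route is exactly the paper's rigorous proof: you state the unit-sphere coercivity as an extra hypothesis, upgrade it by homogeneity to $\langle v,u\rangle\ge c\|u\|^{\alpha+1}$, and integrate the Bernoulli inequality. The paper also never uses \eqref{eq:resolvent_growth}. It invokes $\langle \xi,u\rangle\ge m\|u\|^{\alpha+1}$ directly (in its Corollary, precisely your condition \eqref{sphere-coercivity}) and then integrates $E'+m2^{\frac{\alpha+1}{2}}E^{\frac{\alpha+1}{2}}\le 0$. You do not need to add $0\in A(0)$: let $\lambda\to0^+$ in $(\lambda u,\lambda^\alpha v)\in A$ and use that the graph of a maximal monotone operator is closed.

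\textbf{Step 2 cannot be closed, and not because of a hidden skew part.} Since $J_\lambda=(I+\lambda A)^{-1}$ is nonexpansive with $J_\lambda 0=0$, every solution of \eqref{eq:res_eq_thm} satisfies $\|x_\lambda\|\le\|y\|=1$ for all $\lambda>0$. So \eqref{eq:resolvent_growth} holds for every operator in the class and carries no information. Your own identity already shows this: $\|y_n\|^2=1+2\lambda_n\langle v_n,u_n\rangle+\lambda_n^2\|v_n\|^2\ge 1=\|u_n\|^2$, so no violation can ever be produced.

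Concretely, $A\equiv 0$ is maximal monotone, $\alpha$-homogeneous for every $\alpha>1$, and satisfies \eqref{eq:resolvent_growth}, yet $S(t)=I$. The statement as written is therefore false, and coercivity (or a substitute) must be assumed, as both you and the paper end up doing.

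\textbf{The same bound corrects Step 1.} The rescaled problem $z+A(z)\ni y/s$ is not $\mu$-independent, since $y/s=\mu^{\frac{1}{\alpha-1}}y\to 0$. Hence $\|z\|\le\mu^{\frac{1}{\alpha-1}}$ and $\|x_\mu\|\le 1$. The blow-up in item (1) does not occur even in your profile sense, so item (1) survives only as a heuristic, like the paper's ``balancing'' argument.

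\textbf{Your scaling cross-check is a genuinely different route.} The identity $S(t)(rw)=rS(r^{\alpha-1}t)w$ lets you work directly with mild solutions. It needs only a uniform fixed-time contraction $\sup_{\|w\|=1}\|S(T)w\|\le\theta<1$, not pointwise coercivity. That suits the hyperbolic applications, where the dissipation sees only the velocity, and is closer in spirit to Lemma~\ref{lem:nakao}. However, that uniform contraction is again an extra hypothesis, not a consequence of \eqref{eq:resolvent_growth}.
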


\subsection{Heuristic Motivation and Proof Strategy}
This subsection provides a formal and heuristic explanation of the decay mechanism. The argument below is not intended to be a complete proof in the abstract setting, since compactness and regularity properties depend on the specific structure of the underlying evolution equation. Rigorous justifications will be carried out in the applications.

The decay mechanism can be formally understood by deriving a differential inequality for the energy $E(t) = \frac{1}{2}\|u(t)\|^2$. We aim to show that:
\begin{equation}\label{eq:diff_ineq}
    \frac{d}{dt} E(t) + C E(t)^{\frac{\alpha+1}{2}} \le 0.
\end{equation}
This is achieved in two steps.

\subsubsection*{Step 1: Identification of the effective homogeneity}

Let $A$ be an $\alpha$--homogeneous maximal monotone operator. The resolvent equation
\[
x_\lambda+\lambda A(x_\lambda)\ni y
\]
reveals the effective scaling of the operator near the origin.

Balancing the two terms suggests that
\[
\|x_\lambda\| \sim \lambda^{-\frac{1}{\alpha-1}}.
\]
Indeed, assuming that $A$ behaves like a homogeneous operator of order $\alpha$, one has
\[
A(x_\lambda)\sim \|x_\lambda\|^\alpha
\sim \lambda^{-\frac{\alpha}{\alpha-1}},
\]
and therefore
\[
\lambda A(x_\lambda)\sim \lambda^{-\frac{1}{\alpha-1}},
\]
which matches the magnitude of $x_\lambda$.

Thus, the resolvent blow-up identifies the homogeneity exponent $\alpha$.

\medskip
\noindent
\subsubsection*{Step 2: From resolvent scaling to coercive dissipation}
\medskip

This reflects the general principle introduced in the Introduction:
the resolvent identifies the correct nonlinear scale, while the decay
mechanism itself is governed by coercive dissipation.

In the applications considered in this paper, one can establish an estimate of the form
\begin{equation}\label{coercive-abstract}
\langle \xi,u\rangle_H \ge m \|u\|_H^{\alpha+1},
\qquad \forall u\in D(A), \;\forall \xi\in A(u),
\end{equation}
for some constant $m>0$.

This inequality is consistent with the homogeneity detected through the resolvent,
since the exponent $\alpha+1$ corresponds to the nonlinear balance identified above.

\medskip
\noindent
\textbf{Transition to the Rigorous Argument.}

The previous argument identifies the correct decay exponent through the effective
homogeneity of the operator. However, in the abstract setting of maximal monotone
operators, the compactness required to justify a blow-up argument is not available
in general.

For this reason, the rigorous proof will rely on a direct energy method. Once the
homogeneity exponent is identified, we derive a nonlinear coercive estimate and
deduce the decay via a Lyapunov functional argument.

\subsection{Rigorous Proof of Theorem \ref{thm:main}}

\begin{proof}
We divide the argument into two main steps.

\medskip
\noindent\textbf{Step 3: Energy inequality.}

Let $u(t)$ be a strong solution of
\[
u_t(t)+A(u(t))\ni 0.
\]
Since $-u_t(t)\in A(u(t))$, applying \eqref{coercive-abstract} yields
\[
\langle -u_t(t),u(t)\rangle_H \ge m \|u(t)\|_H^{\alpha+1}.
\]
Hence,
\[
\frac{d}{dt}\Big(\tfrac12\|u(t)\|_H^2\Big)
=\langle u_t(t),u(t)\rangle_H
\le -m\|u(t)\|_H^{\alpha+1}.
\]
Setting
\[
E(t)=\tfrac12\|u(t)\|_H^2,
\]
we obtain
\[
E'(t)+m\,2^{\frac{\alpha+1}{2}}E(t)^{\frac{\alpha+1}{2}}\le 0.
\]

\medskip
\noindent\textbf{Step 4: Polynomial decay.}

A standard differential inequality argument yields
\[
E(t)\le C(1+t)^{-\frac{2}{\alpha-1}}.
\]

This concludes the proof.
\end{proof}

\subsection{A rigorous decay principle for homogeneous maximal monotone operators}

The heuristic rescaling argument presented before can be replaced by a fully rigorous energy method.
The key point is that, for the class of applications considered in this paper, the relevant information
is not the existence of a blow-up profile, but rather a coercivity inequality for the dissipation.
This leads to a differential inequality for the energy and, consequently, to the sharp polynomial decay rate.

\begin{theorem}[Abstract polynomial decay criterion]
Let $H$ be a real Hilbert space and let $A:D(A)\subset H\to 2^H$ be a maximal monotone operator such that
$0\in A(0)$. Consider a (strong, or equivalently an energy) solution $u:[0,\infty)\to H$ of
\begin{equation}
u_t(t)+A(u(t))\ni 0
\qquad \text{for a.e. } t>0.
\end{equation}
Assume that there exist constants $m>0$ and $\alpha>1$ (where $\alpha$ is related to the "order of non-linearity" or "growth rate" of the operator near the origin) such that
\begin{equation}\label{coercive-abstract}
\langle \xi,u\rangle_H \ge m \|u\|_H^{\alpha+1},
\qquad \forall\, u\in D(A),\ \forall\, \xi\in A(u).
\end{equation}
Then the energy
\begin{equation}
E(t):=\frac12\|u(t)\|_H^2
\end{equation}
satisfies, for a.e. $t>0$,
\begin{equation}\label{DI-energy}
E'(t)+m\,2^{\frac{\alpha+1}{2}}\,E(t)^{\frac{\alpha+1}{2}}\le 0.
\end{equation}
In particular,
\begin{equation}\label{poly-decay}
E(t)\le
\left(
E(0)^{-\frac{\alpha-1}{2}}
+\frac{\alpha-1}{2}\,m\,2^{\frac{\alpha+1}{2}}\, t
\right)^{-\frac{2}{\alpha-1}}
\le C(1+t)^{-\frac{2}{\alpha-1}},
\qquad t\ge 0,
\end{equation}
for some constant $C>0$ depending only on $\alpha$, $m$, and $E(0)$.
\end{theorem}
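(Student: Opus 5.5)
The plan is to derive the differential inequality \eqref{DI-energy} directly from the coercivity assumption \eqref{coercive-abstract}, as in Step 3 of the proof of Theorem \ref{thm:main}. Then I integrate it explicitly by a change of unknown. Finally I handle the technical points: regularity of $E$, the possibility that $E$ vanishes, and, if desired, passage to non-smooth data.

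\textbf{Step 1 (regularity and the energy identity).} For $u_0\in D(A)$, the strong solution of $u_t+A(u)\ni 0$ is Lipschitz on $[0,\infty)$ with values in $H$. Since $H$ is Hilbert, $u$ is differentiable a.e. and $-u_t(t)\in A(u(t))$ for a.e. $t$. Consequently $t\mapsto E(t)=\tfrac12\|u(t)\|_H^2$ is locally absolutely continuous, with $E'(t)=\langle u_t(t),u(t)\rangle_H$ for a.e. $t$. Applying \eqref{coercive-abstract} with $\xi=-u_t(t)$ gives
\[
E'(t)\le -m\|u(t)\|_H^{\alpha+1}=-m\,(2E(t))^{\frac{\alpha+1}{2}},
\]
which is exactly \eqref{DI-energy}. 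In particular $E$ is nonincreasing. This also follows from $0\in A(0)$, since then $0$ is an equilibrium and the semigroup is a contraction, so $\|u(t)\|=\|S(t)u_0-S(t)0\|\le\|u_0\|$.

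\textbf{Step 2 (integration).} Let $T^*=\inf\{t:E(t)=0\}\in(0,\infty]$. Since $E$ is nonincreasing and nonnegative, $E\equiv0$ on $[T^*,\infty)$, so \eqref{poly-decay} is trivial there. On $[0,T^*)$ we have $E>0$. Set $\Phi(t)=E(t)^{-\frac{\alpha-1}{2}}$, which is locally absolutely continuous on $[0,T^*)$, being a $C^1$ function of a positive absolutely continuous function. For a.e. $t$ in this interval,
\[
\Phi'(t)=-\tfrac{\alpha-1}{2}E^{-\frac{\alpha+1}{2}}E'\ge \tfrac{\alpha-1}{2}\,m\,2^{\frac{\alpha+1}{2}}.
\]
Integrating from $0$ to $t$ and inverting gives the first inequality in \eqref{poly-decay}. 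The second follows by bounding $E(0)^{-\frac{\alpha-1}{2}}+ct\ge c_0(1+t)$ with $c_0=\min\{E(0)^{-\frac{\alpha-1}{2}},c\}$, where $c=\tfrac{\alpha-1}{2}m2^{\frac{\alpha+1}{2}}$. This yields $C=c_0^{-2/(\alpha-1)}$, depending only on $\alpha$, $m$ and $E(0)$.

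\textbf{Step 3 (general data, optional).} If one wants the statement for energy or mild solutions with $u_0\in\overline{D(A)}$, approximate $u_0$ by $u_0^n\in D(A)$. By contractivity, $S(t)u_0^n\to S(t)u_0$ uniformly in $t$. The explicit bound in \eqref{poly-decay} is continuous in $E(0)$, so it passes to the limit.

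I expect the main obstacle to be Step 1. The care needed is in justifying the chain rule for $\|u\|^2$ along a merely a.e.-differentiable trajectory, and in confirming that the a.e. selection $-u_t\in A(u)$ is legitimate. The division by $E$ in Step 2 is the other delicate point, and it is handled by the splitting at $T^*$.
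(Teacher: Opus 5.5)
Your proposal is correct and follows the paper's own proof: you apply the coercivity with $\xi=-u_t(t)$ to get $E'\le -m(2E)^{\frac{\alpha+1}{2}}$, then integrate $\frac{d}{dt}E^{-\frac{\alpha-1}{2}}\ge \frac{\alpha-1}{2}\,m\,2^{\frac{\alpha+1}{2}}$ exactly as the paper does. Your additions make the same argument fully rigorous without changing it: the absolute continuity of $E$, the extinction time $T^*$ in place of the paper's informal ``wherever $E(t)>0$'', the explicit constant $C=\max\{E(0),c^{-2/(\alpha-1)}\}$, and the density step for mild solutions.
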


\begin{proof}
Let $u$ be a strong solution. Since $-u_t(t)\in A(u(t))$ for a.e. $t>0$, we may apply
\eqref{coercive-abstract} with $\xi=-u_t(t)$ and obtain
\begin{equation}
\langle -u_t(t),u(t)\rangle_H \ge m\|u(t)\|_H^{\alpha+1}.
\end{equation}
Therefore,
\begin{equation}
\frac{d}{dt}\Big(\frac12\|u(t)\|_H^2\Big)
=\langle u_t(t),u(t)\rangle_H
\le -m\|u(t)\|_H^{\alpha+1}.
\end{equation}
Since $\|u(t)\|_H^2=2E(t)$, we infer
\begin{equation}
E'(t)\le -m(2E(t))^{\frac{\alpha+1}{2}}
=-m\,2^{\frac{\alpha+1}{2}}E(t)^{\frac{\alpha+1}{2}},
\end{equation}
which proves \eqref{DI-energy}.

Now set
\begin{equation}
c_\alpha:=m\,2^{\frac{\alpha+1}{2}}.
\end{equation}
Since $\alpha>1$, we may divide by $E(t)^{\frac{\alpha+1}{2}}$ wherever $E(t)>0$ and write
\begin{equation}
E'(t)+c_\alpha E(t)^{\frac{\alpha+1}{2}}\le 0.
\end{equation}
Equivalently,
\begin{equation}
\frac{d}{dt}\Big(E(t)^{-\frac{\alpha-1}{2}}\Big)
=
-\frac{\alpha-1}{2}E(t)^{-\frac{\alpha+1}{2}}E'(t)
\ge \frac{\alpha-1}{2}c_\alpha.
\end{equation}
Integrating on $[0,t]$, we get
\begin{equation}
E(t)^{-\frac{\alpha-1}{2}}
\ge
E(0)^{-\frac{\alpha-1}{2}}
+\frac{\alpha-1}{2}c_\alpha t.
\end{equation}
Taking the power $-2/(\alpha-1)$ yields \eqref{poly-decay}.
\end{proof}

\begin{remark}[Weak solutions]
The same conclusion remains valid for weak solutions obtained by approximation, for instance through
the Yosida regularization or Galerkin schemes, provided the corresponding approximate energies satisfy
the same differential inequality uniformly. Passing to the limit by lower semicontinuity yields
\eqref{poly-decay} for the limiting trajectory.
\end{remark}

The previous theorem shows that the polynomial decay mechanism is a direct consequence of the coercive
dissipation estimate \eqref{coercive-abstract}. In the homogeneous setting, this estimate is natural.

\begin{corollary}[Homogeneous case]
Assume, in addition, that $A$ is $\alpha$-homogeneous in the sense that
\begin{equation}
(u,\xi)\in A \quad \Longrightarrow \quad (\lambda u,\lambda^\alpha \xi)\in A,
\qquad \forall\, \lambda>0,
\end{equation}
and that
\begin{equation}\label{sphere-coercivity}
\inf\big\{\langle \xi,v\rangle_H;\ \|v\|_H=1,\ \xi\in A(v)\big\}=m_*>0.
\end{equation}
Then \eqref{coercive-abstract} holds with $m=m_*$, and hence every solution satisfies
\begin{equation}
E(t)\le C(1+t)^{-\frac{2}{\alpha-1}}.
\end{equation}
\end{corollary}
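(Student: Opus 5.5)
The plan is to derive the coercivity inequality \eqref{coercive-abstract} from \eqref{sphere-coercivity} by homogeneity, and then apply the abstract polynomial decay criterion (the preceding theorem) verbatim.

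\textbf{Step 1 (the case $u=0$).} For $u=0$ and any $\xi\in A(0)$ we have $\langle \xi,0\rangle_H=0=m_*\|0\|_H^{\alpha+1}$, so \eqref{coercive-abstract} holds trivially.

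\textbf{Step 2 (the case $u\neq 0$).} Fix $u\in D(A)$ with $u\neq 0$ and $\xi\in A(u)$. Set $s=\|u\|_H>0$ and $v=u/s$, so that $\|v\|_H=1$. Apply the homogeneity hypothesis with $\lambda=1/s$ to the pair $(u,\xi)\in A$. This gives $(v,s^{-\alpha}\xi)\in A$, so $v\in D(A)$ and $s^{-\alpha}\xi\in A(v)$. By \eqref{sphere-coercivity},
\[
\langle s^{-\alpha}\xi, v\rangle_H\ge m_*.
\]
The left-hand side equals $s^{-\alpha-1}\langle \xi,u\rangle_H$. Multiplying by $s^{\alpha+1}$ gives
\[
\langle \xi,u\rangle_H\ge m_*\|u\|_H^{\alpha+1}.
\]
This is \eqref{coercive-abstract} with $m=m_*$.

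\textbf{Step 3 (decay).} The remaining hypothesis of the abstract theorem is $0\in A(0)$. If it is not already assumed separately, I would obtain it by closedness. Maximal monotone operators have closed graphs, and if some $\xi\in A(0)$ exists, then $(0,\lambda^\alpha\xi)\in A$ for every $\lambda>0$. Letting $\lambda\to 0^+$ gives $(0,0)\in A$, by closedness of the graph. The theorem then gives \eqref{DI-energy} and \eqref{poly-decay}, that is, $E(t)\le C(1+t)^{-2/(\alpha-1)}$ for strong solutions. For weak solutions, i.e.\ initial data in $\overline{D(A)}$, I would use the Remark on weak solutions: approximate by strong solutions and use the contraction property of $S(t)$ in $H$.

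\textbf{Main obstacle.} Steps 1 and 2 are purely algebraic. The one point needing care is the nonemptiness of $A(0)$ used in Step 3, which makes $0$ an equilibrium. If $A(0)=\emptyset$ were possible, I would instead use that $D(A)$ is a cone and $0\in\overline{D(A)}$, together with the coercivity just proved, to show $0\in A(0)$ via the resolvent. Concretely, coercivity forces $J_\lambda 0=0$: if $x=J_\lambda 0$, then $-x/\lambda\in A(x)$, so $-\|x\|^2/\lambda\ge m_*\|x\|^{\alpha+1}$, hence $x=0$ and $0\in A(0)$.
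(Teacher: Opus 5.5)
Your Steps 1--2 are exactly the paper's proof: rescale $(u,\xi)\in A$ by $\lambda=1/\|u\|_H$ using homogeneity, apply the sphere coercivity to $(u/\|u\|_H,\ \xi/\|u\|_H^{\alpha})$, and multiply back by $\|u\|_H^{\alpha+1}$. The decay then follows from the abstract theorem. The extra care in your Step 3 is correct, though the paper does not need it: the trivial case $u=0$, recovering $0\in A(0)$ either from the closed graph or from $J_\lambda 0=0$ (the latter alone suffices), and passing to weak solutions via contraction. The paper takes $0\in A(0)$ as one of the standing hypotheses carried over by ``in addition'' and leaves these points implicit.
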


\begin{proof}
Fix $u\in D(A)\setminus\{0\}$ and $\xi\in A(u)$. Set
\begin{equation}
v:=\frac{u}{\|u\|_H},
\qquad
\eta:=\frac{\xi}{\|u\|_H^\alpha}.
\end{equation}
By $\alpha$-homogeneity, $\eta\in A(v)$. Hence, by \eqref{sphere-coercivity},
\begin{equation}
\langle \eta,v\rangle_H \ge m_*.
\end{equation}
Multiplying by $\|u\|_H^{\alpha+1}$ gives
\begin{equation}
\langle \xi,u\rangle_H \ge m_*\|u\|_H^{\alpha+1}.
\end{equation}
Thus \eqref{coercive-abstract} follows, and the result is an immediate consequence of the theorem.
\end{proof}

\begin{remark}[How the applications fit]
In the concrete PDE models treated later, the operator $A$ is not used through an abstract compactness
argument, but through the explicit lower bound
\[
\langle A(u),u\rangle \gtrsim \|u\|^{\alpha+1},
\]
which is verified by the structure of the damping term (local, nonlocal, Kelvin--Voigt type, etc.)
together with the natural energy norm of the phase space. Therefore, the abstract theorem above applies
directly and avoids any unjustified blow-up or strong compactness argument.
\end{remark}

\begin{remark}
This coercivity estimate \eqref{coercive-abstract} is the nonlinear functional counterpart of the resolvent growth condition $||\lambda(i\beta I - \mathcal{L})^{-1}|| \leq C|\beta|^\alpha$ in the linear theory.
\end{remark}

\begin{remark}[The Linear Limit and Exponential Decay]
It is interesting to observe what happens if we "switch off" the nonlinearity, considering the linear counterpart of the problem (e.g., replacing the nonlocal coefficient $\rho(\|\nabla u_t\|^2)$ with a positive constant $\rho_0 > 0$).
In this case, the operator $A$ becomes linear. If the resulting linear damping is coercive, the resolvent equation $\lambda x + Ax = y$ admits a unique solution satisfying $\|x\| \le C\|y\|$ uniformly as $\lambda \to 0^+$ (i.e., the resolvent is bounded at the origin).
While our main theorem is designed to capture polynomial decay rates arising from degeneracy at the origin (where $\|x_\lambda\| \to \infty$), the boundedness of the resolvent in the linear limit is consistent with the classical Gearhart-Prüss-Huang theorem, which guarantees \textbf{exponential stability} provided the resolvent is also bounded at high frequencies. Thus, our framework naturally interpolates between the degenerate polynomial regime and the linear exponential regime. This confirms that the nonlinear/nonlocal component is precisely the mechanism forcing polynomial decay: once it is removed, the operator becomes coercive and the resolvent remains uniformly bounded near the origin, restoring exponential stability. Summarizing, the nonlinear resolvent does not generate decay;
rather, it reveals the effective nonlinear scale at which
coercive dissipation operates.
\end{remark}


\begin{remark}

The Borichev-Tomilov Theorem (2010) is arguably the "Rosetta Stone" of polynomial stabilization for linear semigroups. However, since it fundamentally depends on analysis on the imaginary axis ($i\beta$) and the frequency domain, it has always been an insurmountable barrier for nonlinear problems, as nonlinear operators do not have a spectrum in the classical sense and the Fourier Transform fails.
By replacing spectral analysis in $i\mathbb{R}$ with the asymptotic analysis of the real resolvent equation ($\lambda x_\lambda + A(x_\lambda) \ni y$ as $\lambda \to 0^+$), we achieve a true conceptual leap. We propose a nonlinear Tauberian-type principle, whose rigorous content is given by an abstract coercivity-based decay theorem. The polynomial decay is not a consequence of spectral properties,
but rather a manifestation of a nonlinear coercivity mechanism, encoded at the level of the real resolvent near the origin.

\medskip

The main contributions of this paper are the following:

\begin{itemize}
\item Instead of fighting against the lack of spectrum, we sought decay information in the "blow-up rate" of the $\|x_\lambda\|$ norm at the origin. This perfectly translates the Borichev-Tomilov philosophy to the world of monotone maximal operators.

\item The Abstract Structure (Theorem \ref{thm:main}): The division between the purely homogeneous case and the case with perturbations was very important because Physics rarely gives us perfectly homogeneous operators. Our Main Theorem  ensures that our theory is not just a mathematical curiosity, but a tool ready to handle real differential equations.

\item The Perfect Application (Non-Local Kelvin-Voigt Damping): Applying the abstract theorem to recover the $1/t$ decay of the wave equation with non-local damping was the ideal "baptism of fire." Showing that this theory solves the problem for weak solutions, where the classical multiplier method would require excessive regularity that the system does not possess. Indeed, the method does not eliminate regularity requirements, but relocates them into the phase space where the coercivity becomes explicit.

\end{itemize}

\end{remark}

\section{A General Decay Lemma}

In this section, we recall a general decay mechanism based on nonlinear
difference and integral inequalities. This type of argument goes back to Nakao and has been further developed by Haraux, Komornik and Tebou.

\begin{lemma}[Nakao-type decay lemma]\label{lem:nakao}
Assume that there exist constants $T>0$, $C>0$ and $\alpha>0$ such that
\begin{equation}\label{eq:difference_inequality}
E(t)^{1+\alpha} \le C \big( E(t) - E(t+T) \big),
\quad \forall t \ge 0.
\end{equation}
Then there exists a constant $C_T>0$ such that
\begin{equation}\label{eq:polynomial_decay_nakao}
E(t) \le C_T (1+t)^{-1/\alpha},
\quad \forall t \ge 0.
\end{equation}
\end{lemma}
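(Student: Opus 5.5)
The plan is to reduce the difference inequality \eqref{eq:difference_inequality} to a decay estimate along the discrete sequence $E_k := E(kT)$, and then recover all $t$ by monotonicity. Implicitly $E$ is nonnegative and nonincreasing, as it is for the energies considered in this paper; nonnegativity also follows from \eqref{eq:difference_inequality}, since the left-hand side is nonnegative and so $E(t)\ge E(t+T)$. Applying \eqref{eq:difference_inequality} at $t=kT$ gives
\[
E_k^{1+\alpha} \le C\,(E_k - E_{k+1}), \qquad k\ge 0,
\]
so $(E_k)$ is nonincreasing and $E_{k+1}\le E_k - C^{-1}E_k^{1+\alpha}$. If some $E_k=0$, then all later terms vanish and there is nothing to prove. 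We may therefore assume $E_k>0$ for all $k$.

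The key step is a discrete analogue of the computation $\frac{d}{dt}E^{-\alpha}\ge \alpha/C$ used in the proof of the abstract decay theorem above. Take $\phi(s)=s^{-\alpha}$, which is decreasing and convex on $(0,\infty)$. By the mean value theorem there is $\xi\in[E_{k+1},E_k]$ with
\[
E_{k+1}^{-\alpha}-E_k^{-\alpha} = \alpha\,\xi^{-\alpha-1}(E_k-E_{k+1}) \ge \alpha\,E_k^{-\alpha-1}(E_k-E_{k+1}) \ge \frac{\alpha}{C}.
\]
The last inequality is exactly the difference inequality divided by $E_k^{1+\alpha}$. This is where the order of the argument matters, and it is the only delicate point. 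Evaluating the derivative at the larger endpoint $E_k$ gives the right direction for free, so we never need to control the ratio $E_{k+1}/E_k$ from below; that ratio could otherwise be small, and the naive estimate at the smaller endpoint would fail. Summing over $k$ yields
\[
E_n^{-\alpha}\ge E_0^{-\alpha}+\frac{\alpha n}{C},
\qquad\text{that is,}\qquad
E_n\le \Big(E_0^{-\alpha}+\frac{\alpha n}{C}\Big)^{-1/\alpha}\le \Big(\frac{C}{\alpha}\Big)^{1/\alpha} n^{-1/\alpha}.
\]
Note that the resulting constant does not even depend on $E_0$.

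Finally, interpolate to arbitrary $t\ge0$. Write $t\in[nT,(n+1)T)$. Monotonicity of $E$ gives $E(t)\le E_n$. For $n\ge1$ we have $n\ge (1+t)/(T+1)\cdot$(a harmless factor), more precisely $n\ge c_T(1+t)$ with, say, $c_T = \bigl(2(1+T)\bigr)^{-1}$ when $t\ge T$. For $0\le t<T$ we use $E(t)\le E(0)\le E(0)(1+T)^{1/\alpha}(1+t)^{-1/\alpha}$. Combining the two ranges gives
\[
E(t)\le C_T(1+t)^{-1/\alpha},
\]
with $C_T$ depending on $C$, $\alpha$, $T$ and $E(0)$.

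The same argument applies with $t=s+kT$ for any fixed $s\in[0,T)$. This removes the need for monotonicity between grid points, should one wish to avoid assuming that $E$ is nonincreasing in general.
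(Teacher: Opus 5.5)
Your proof is correct. The paper does not actually prove this lemma: it states it and attributes the mechanism to Nakao, Haraux, Komornik and Tebou, so there is no in-paper argument to compare against. What you wrote is essentially Nakao's classical argument. The bound $E_{k+1}^{-\alpha}-E_k^{-\alpha}\ge \alpha E_k^{-\alpha-1}(E_k-E_{k+1})\ge \alpha/C$ comes from evaluating the derivative of $s^{-\alpha}$ at the larger endpoint, and it is the discrete counterpart of the differentiation of a negative power of $E$ in the paper's abstract decay theorem.

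There are two small points to fix.

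First, nonnegativity of $E$ does not follow from \eqref{eq:difference_inequality}. It is needed even to make sense of $E^{1+\alpha}$; what the inequality gives is $E(t)\ge E(t+T)$.

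Second, your closing remark about dropping monotonicity is incomplete. The shifted grids $s+kT$ only cover $t\ge T$, while for $0\le t<T$ you still use $E(t)\le E(0)$, which needs monotonicity. The repair is immediate from the hypothesis itself. Since $E(t+T)\ge 0$, you get $E(t)^{1+\alpha}\le C\,E(t)$, hence $E(t)\le C^{1/\alpha}$ for every $t\ge 0$. With this bound on $[0,T)$, the constant $C_T$ can be chosen to depend only on $C$, $\alpha$ and $T$, not on $E(0)$. This matches your own observation that the grid bound $(C/\alpha)^{1/\alpha}n^{-1/\alpha}$ is independent of $E_0$.
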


\begin{lemma}[Integral decay lemma]\label{lem:integral_decay}
Assume that there exist constants $T>0$, $C>0$ and $\alpha>0$ such that
\begin{equation}\label{eq:integral_inequality}
\int_t^{t+T} E(s)\, ds \le C \big( E(t) \big)^{\alpha},
\quad \forall t \ge 0.
\end{equation}
Then the energy $E$ satisfies the polynomial decay estimate
\[
E(t) \le C_T (1+t)^{-1/(\alpha-1)},
\quad \forall t \ge 0,
\]
provided $\alpha>1$.
\end{lemma}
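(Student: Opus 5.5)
The plan is to reduce the integral inequality \eqref{eq:integral_inequality} to a discrete recursion along the grid $t_k=t_0+kT$ and iterate it. I will use two properties of $E$ that the statement leaves implicit but that hold in every application (for instance Theorem~3.2 gives $E'\le 0$): $E\ge 0$ and $E$ is nonincreasing. Monotonicity gives $T\,E(t+T)\le\int_t^{t+T}E(s)\,ds$, so \eqref{eq:integral_inequality} implies
\[
E(t+T)\le \frac{C}{T}\,E(t)^{\alpha}, \qquad t\ge 0 .
\]
Set $e_k:=E(kT)$. Then $e_{k+1}\le (C/T)\,e_k^{\alpha}$ with $\alpha>1$. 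This is a \emph{superlinear} recursion, which is the main point of the argument.

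The main obstacle shows up at this step, and I expect the statement to need an additional hypothesis exactly here. A superlinear recursion produces decay only once $e_k$ is below the threshold $(T/C)^{1/(\alpha-1)}$. Without a smallness condition the lemma as stated cannot hold. Take $E\equiv K$ constant with $TK\le CK^{\alpha}$, which is possible for $K$ large: it satisfies \eqref{eq:integral_inequality} and does not decay. So I would add the hypothesis
\[
q:=\frac{C}{T}\,E(0)^{\alpha-1}<1 .
\]
Alternatively, I would assume that $E(t_0)$ falls below this threshold at some finite time $t_0$, for instance because $E(t)\to 0$ by an independent argument; then I restart the iteration at $t_0$. Before committing, I would also check whether the authors intend a different form, such as $\int_t^{\infty}E\le C\,E(t)^{\alpha}$ or the Nakao form of Lemma~\ref{lem:nakao}, since the exponent $1/(\alpha-1)$ is the one that form produces.

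Under the smallness hypothesis, induction gives $e_k\le q^{(\alpha^k-1)/(\alpha-1)}E(0)$. Concretely, write $e_k=\theta_k(T/C)^{1/(\alpha-1)}$; then $\theta_{k+1}\le\theta_k^{\alpha}$, so $\theta_k\le\theta_0^{\alpha^k}$. This is doubly exponential decay along the grid. Monotonicity then extends it to all $t$: for $t\in[kT,(k+1)T)$ we have $E(t)\le e_k$. Finally, doubly exponential decay implies polynomial decay, since
\[
\sup_{k\ge0}\,(1+(k+1)T)^{1/(\alpha-1)}\,\theta_0^{\alpha^k}<\infty .
\]
This yields $E(t)\le C_T(1+t)^{-1/(\alpha-1)}$ with $C_T$ depending on $T$, $C$, $\alpha$ and $E(0)$.

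Under the "restart" version of the hypothesis, I would apply the same argument from $t_0$ onward. On $[0,t_0]$ I would use the trivial bound $E(t)\le E(0)\le E(0)(1+t_0)^{1/(\alpha-1)}(1+t)^{-1/(\alpha-1)}$ and enlarge $C_T$ accordingly.
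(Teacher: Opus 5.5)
Your diagnosis is correct, and there is no argument in the paper to compare against: the lemma is only stated, next to Lemma~\ref{lem:nakao}, without proof. As written it is false, even after adding the implicit standing assumptions $E\ge 0$ and $E$ nonincreasing. Your example $E\equiv K$ with $K^{\alpha-1}\ge T/C$ satisfies the hypothesis for every $t\ge 0$ and does not decay. So some extra hypothesis is unavoidable, such as your smallness condition $q=(C/T)E(0)^{\alpha-1}<1$ or its restarted version. Under that hypothesis your proof is correct:
\begin{itemize}
\item monotonicity gives $E(t+T)\le (C/T)E(t)^{\alpha}$;
\item the normalisation $e_k=\theta_k(T/C)^{1/(\alpha-1)}$ yields $\theta_{k+1}\le\theta_k^{\alpha}$ with $\theta_0=q^{1/(\alpha-1)}<1$;
\item monotonicity transfers the resulting doubly exponential bound from the grid to all $t$, and this bound dominates every power of $1+t$;
\item the bound $E(t)\le E(0)(1+t_0)^{1/(\alpha-1)}(1+t)^{-1/(\alpha-1)}$ on $[0,t_0]$ handles the restart.
\end{itemize}

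The repaired hypothesis gives doubly exponential decay, so the exponent $1/(\alpha-1)$ plays no role; this confirms that the statement is garbled. One correction to your aside, though: $\int_t^\infty E(s)\,ds\le C\,E(t)^{\alpha}$ with $\alpha>1$ is not the form behind this exponent. Set $\phi(t)=\int_t^\infty E(s)\,ds$, which is finite by the hypothesis at $t=0$. Then $\phi'\le -C^{-1/\alpha}\phi^{1/\alpha}$ with $1/\alpha<1$, so $E$ vanishes in finite time. For nonnegative nonincreasing $E$, two inequalities genuinely produce $(1+t)^{-1/(\alpha-1)}$:
\begin{itemize}
\item Komornik's form $\int_t^\infty E(s)^{\alpha}\,ds\le C\,E(t)$. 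Put $\phi(t)=\int_t^\infty E^{\alpha}$; then $\phi'\le -C^{-\alpha}\phi^{\alpha}$ gives $\phi(t)\lesssim(1+t)^{-1/(\alpha-1)}$, and $\tfrac{t}{2}E(t)^{\alpha}\le\phi(t/2)$ finishes.
\item The Nakao form $E(t)^{\alpha}\le C\,(E(t)-E(t+T))$, i.e.\ Lemma~\ref{lem:nakao} with $\alpha$ replaced by $\alpha-1$.
\end{itemize}
One of these is presumably what was intended.
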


\section{The Motivation}
\label{sec:motivation}

In this section, we present the physical and mathematical problem that motivated us to develop a unifying abstract theory—one capable of encompassing not only this specific model but a wide variety of degenerate nonlinear systems.

Consider a plate (or wave) equation subject to a nonlinear dissipation of nonlocal nature:
\begin{align*}
\partial_t^2u+\Delta^2u + a(t) \partial_t u =0, \quad \forall (x,t)\in\Omega \times \mathbb{R}_+,
\end{align*}
where the damping coefficient depends on the solution itself, for instance,
\begin{align*}
a(t)= M(\|\nabla u(t)\|_{L^2(\Omega)}^2),\quad M(\lambda)\geq 0.
\end{align*}
This class of problems has been widely studied by various authors. The physical meaning of this specific nonlocal dissipation is well explained in the classical work of Balakrishnan and Taylor \cite{Balakrishnan-Taylor}. Contrary to a localized structural dissipation $a(x) \partial_t u$, which remains strictly positive and constant over time at given spatial points, the nonlocal dissipation $a(t) \partial_t u$ is inherently dynamic. It varies over time and, crucially, can \textit{degenerate} as the system approaches equilibrium, depending on the structure of $a(t)$.

This simple dissipative mechanism has caused a great deal of repercussion among researchers because it intrinsically prevents rapid (exponential) decay when $a(t)$ represents the mechanical energy of the system (or a part of it, such as kinetic or potential energy).

To illustrate this, let us consider the wave equation subject to homogeneous Dirichlet boundary conditions:
\begin{align}\label{A}
\partial_t^2u-\Delta u+a(t)\partial_t u=0,
\end{align}
and let us examine three standard choices for the nonlocal damping coefficient:
\begin{align*}
a_1(t)=\|\nabla u(t)\|_{L^2(\Omega)}^2, \quad a_2(t)=\|\partial_t u(t)\|_{L^2(\Omega)}^2, \quad \hbox{or} \quad a_3(t)=\|\nabla u(t)\|_{L^2(\Omega)}^2+\|\partial_t u(t)\|_{L^2(\Omega)}^2.
\end{align*}

When the damping structure $a_1(t) \partial_t u$ depends solely on the potential energy, it has been shown that the weak energy of regular solutions decays algebraically with the rate $\frac{c_1}{\sqrt{c_2+t}}$ (see \cite{Cavalcanti2025}). In contrast, when the damping term is governed by the total energy $a_3(t)$ or its kinetic part $a_2(t)$, the polynomial decay rate improves to $\frac{c_1}{c_2+t}$, which is the optimal rate for this equation.

The weak and regular energies associated with equation \eqref{A} are given by:
\begin{align}
E_u^w(t)=&\frac{1}{2}\|\partial_t u(t)\|_{L^2(\Omega)}^2 + \frac{1}{2}\|\nabla u(t)\|_{L^2(\Omega)}^2, \label{WE}\\
E_u^r(t)=&\frac{1}{2}\|\nabla \partial_t u(t)\|_{L^2(\Omega)}^2 + \frac{1}{2}\|\Delta u(t)\|_{L^2(\Omega)}^2,\label{RE}
\end{align}
and they satisfy the following dissipation identities for any $0 \le S < T < +\infty$ and $i \in \{1,2,3\}$:
\begin{align}
E_u^w(T)- E_u^w(S)=&- \int_S^T a_i(t) \|\partial_t u(t)\|_{L^2(\Omega)}^2\,dt,\label{WIE}\\
E_u^r(T)- E_u^r(S)=&-\int_S^T a_i(t)\|\nabla \partial_t u(t)\|_{L^2(\Omega)}^2\,dt.\label{RIE}
\end{align}
Because the function $t \mapsto E_u^w(t)$ is non-increasing due to \eqref{WIE}, there exists $\alpha \ge 0$ such that $E_u^w(t) \rightarrow \alpha$ as $t\rightarrow + \infty$. If $\alpha > 0$, the damping coefficient $a_3(t) = 2 E_u^w(t)$ would remain strictly positive, bounded away from zero by $2\alpha$, which would classically provide a fast (exponential) decay.

However, this creates a structural contradiction. A simple computation using $a_3(t)$ shows that:
\begin{align*}
\frac{d}{dt} E^w_u(t) = - a_3(t) \|\partial_t u(t)\|_{L^2(\Omega)}^2 = - 2 E^w_u(t) \|\partial_t u(t)\|_{L^2(\Omega)}^2.
\end{align*}
Since $\|\partial_t u(t)\|_{L^2(\Omega)}^2 \le 2 E^w_u(t)$, we deduce the critical differential inequality:
\begin{align}\label{eq1.3}
\frac{d}{dt} E_u^w(t) + 4\left(E^w_u(t)\right)^2 \geq 0, \quad \forall t\geq 0.
\end{align}
Integrating \eqref{eq1.3} directly yields a strict lower bound for the energy decay:
\begin{align}\label{eq1.4}
E^w_u(t) \geq \left( \frac{1}{E^w_u(0)} + 4t\right)^{-1}, \quad \forall t\geq 0.
\end{align}

Inequality \eqref{eq1.4} proves that the energy $E^w_u(t)$ \textit{cannot} decay to zero faster than $\mathcal{O}(t^{-1})$. Consequently, the limit $\alpha$ must be zero, but the journey to zero is strictly polynomial.

From a physical point of view, although the dissipation acts uniformly throughout the domain $\Omega$, its strength diminishes dynamically. The function $a_i(t)$ governing the damping mechanism becomes smaller and smaller as $t \to \infty$, acting as a bottleneck for energy dissipation. Mathematically, this temporal degeneracy at infinity corresponds exactly to the singularity (or blow-up) of the corresponding maximal monotone operator near the origin. It is precisely this fundamental connection—between the nonlinear degeneracy of the operator and the slow algebraic decay of the system—that our abstract resolvent theory aims to capture and generalize.

\medskip

All the following examples in the sequel, share a common structural feature:
the dissipation operator induces a coercive inequality of the form
\[
\langle A(u),u\rangle \gtrsim \|u\|^{\alpha+1},
\]
possibly after lifting the dynamics to a suitable phase space.

\medskip
\section{Application I: Wave Equation with Nonlocal Kelvin--Voigt Damping}
\label{sec:app_kv}
In what follows, we denote by $\mathcal{A}$ the operator associated with the full evolution system in the energy space $\mathcal{H}$, while $A$ may refer to the specific nonlinear component in the configuration space.
In this section, we apply our abstract resolvent criterion to a concrete PDE model
exhibiting a genuinely nonlinear and nonlocal dissipation mechanism. We consider the
problem previously studied in \cite{Cavalcanti2025}:
\begin{equation}\label{eq:wave_nonlinear}
\begin{cases}
    u_{tt} - \Delta u - \|\nabla u_t\|_{L^2(\Omega)}^2 \Delta u_t = 0 & \text{in } \Omega \times (0, \infty), \\
    u = 0 & \text{on } \partial\Omega \times (0, \infty), \\
    u(0) = u_0, \quad u_t(0) = u_1 & \text{in } \Omega,
\end{cases}
\end{equation}
where $\Omega \subset \R^n$ is a bounded domain with smooth boundary.

\subsection{Operator setting and Well-Posedness}

We introduce the phase space
\[
\mathcal{H} = H_0^1(\Omega)\times L^2(\Omega),
\]
endowed with the inner product
\[
\langle (u,v),(\tilde u,\tilde v)\rangle_{\mathcal H}
  := \int_\Omega \nabla u\cdot \nabla \tilde u\,dx + \int_\Omega v\,\tilde v\,dx,
\]
and associated norm
\[
\|(u,v)\|_{\mathcal H}^2 = \|\nabla u\|_{L^2(\Omega)}^2 + \|v\|_{L^2(\Omega)}^2.
\]
Let $U=(u,v)^T$. Then \eqref{eq:wave_nonlinear} can be written as a first-order
evolution equation of the form
\[
U'(t) + \mathcal A(U(t)) \ni 0,
\]
where the operator $\mathcal A : D(\mathcal A)\subset \mathcal H \to \mathcal H$ is defined by
\begin{equation}\label{def:operatorA}
\mathcal A
\begin{pmatrix} u \\ v \end{pmatrix}
=
\begin{pmatrix}
 -v\\
 -\Delta u - \|\nabla v\|_{L^2(\Omega)}^2 \Delta v
\end{pmatrix}.
\end{equation}
The natural domain is
\[
D(\mathcal A)=
\Bigl(H^2(\Omega)\cap H_0^1(\Omega)\Bigr)\times
\Bigl(H^2(\Omega)\cap H_0^1(\Omega)\Bigr),
\]
so that $\Delta u$ and $\Delta v$ are well defined in $L^2(\Omega)$.

\begin{remark}[Maximal Monotonicity]
The operator $\mathcal{A}$ can be viewed as a perturbation of the skew-adjoint operator associated with the wave equation by a nonlinear monotone damping term. Specifically, the damping mapping $v \mapsto -\|\nabla v\|^2 \Delta v$ is monotone from $H_0^1(\Omega)$ to $H^{-1}(\Omega)$. By the Minty-Browder theorem for perturbations of maximal monotone operators, $\mathcal{A}$ is maximal monotone. Consequently, for any $U_0 \in \mathcal{H}$, there exists a unique mild solution $U \in C([0,\infty); \mathcal{H})$. If $U_0 \in D(\mathcal{A})$, the solution is strong.
\end{remark}

The energy associated with \eqref{eq:wave_nonlinear} is
\[
E(t)=\frac12\|U(t)\|_{\mathcal H}^2
=\frac12\Bigl(\|\nabla u(t)\|_{L^2(\Omega)}^2+\|u_t(t)\|_{L^2(\Omega)}^2\Bigr).
\]
A formal computation yields the dissipation identity
\begin{equation}\label{est:energy_dissipation}
\frac{d}{dt}E(t) = -\|\nabla u_t(t)\|_{L^2(\Omega)}^4,
\end{equation}
which shows a degenerate (nonlinear) dissipation mechanism. Note that the dissipation is proportional to the fourth power of the $H^1$-norm of the velocity, which corresponds to a 3-homogeneous operator ($q=3$).

\subsection{Resolvent estimate and decay rate}

We now connect the decay of the nonlinear semigroup generated by $-\mathcal A$
to the behavior of the nonlinear resolvent equation at the origin. Let $Y=(f,g)\in\mathcal H$,
and consider the resolvent problem
\begin{equation}\label{res:1}
\lambda U_\lambda + \mathcal A(U_\lambda) \ni Y,
\qquad \lambda>0.
\end{equation}
Writing $U_\lambda=(u_\lambda,v_\lambda)$, system \eqref{res:1} reads
\begin{equation}\label{res:2}
\begin{cases}
\lambda u_\lambda - v_\lambda = f,\\
\lambda v_\lambda - \Delta u_\lambda - \|\nabla v_\lambda\|_{2}^{\,2}\,\Delta v_\lambda = g.
\end{cases}
\end{equation}
Eliminating $v_\lambda$ from the first equation gives $v_\lambda=\lambda u_\lambda - f$.
We proceed to derive the precise growth rate of $\|U_\lambda\|_{\mathcal{H}}$ as $\lambda \to 0^+$.

Testing the second equation of \eqref{res:2} with $v_\lambda$ in $L^2(\Omega)$, we have:
\[
\lambda \|v_\lambda\|_2^2 + \langle -\Delta u_\lambda, v_\lambda \rangle + \|\nabla v_\lambda\|_2^2 \langle -\Delta v_\lambda, v_\lambda \rangle = \langle g, v_\lambda \rangle.
\]
Using integration by parts and the relation $v_\lambda = \lambda u_\lambda - f$ (which implies $u_\lambda = \frac{v_\lambda + f}{\lambda}$), we examine the term involving $\Delta u_\lambda$:
\[
\langle -\Delta u_\lambda, v_\lambda \rangle = \int_\Omega \nabla u_\lambda \cdot \nabla v_\lambda dx = \int_\Omega \nabla \left(\frac{v_\lambda+f}{\lambda}\right) \cdot \nabla v_\lambda dx = \frac{1}{\lambda}\|\nabla v_\lambda\|_2^2 + \frac{1}{\lambda}\int_\Omega \nabla f \cdot \nabla v_\lambda dx.
\]
Substituting this back into the energy identity:
\[
\lambda \|v_\lambda\|_2^2 + \frac{1}{\lambda}\|\nabla v_\lambda\|_2^2 + \|\nabla v_\lambda\|_2^4 = \langle g, v_\lambda \rangle - \frac{1}{\lambda}\int_\Omega \nabla f \cdot \nabla v_\lambda dx.
\]
We are interested in the asymptotic behavior as $\lambda \to 0^+$. In this regime, the term $\frac{1}{\lambda}\|\nabla v_\lambda\|_2^2$ provides the dominant coercivity. Discarding the non-negative terms $\lambda \|v_\lambda\|_2^2$ and $\|\nabla v_\lambda\|_2^4$ on the LHS, we obtain:
\[
\frac{1}{\lambda}\|\nabla v_\lambda\|_2^2 \le \|g\|_2 \|v_\lambda\|_2 + \frac{1}{\lambda} \|\nabla f\|_2 \|\nabla v_\lambda\|_2.
\]
Using Poincaré's inequality $\|v_\lambda\|_2 \le C_P \|\nabla v_\lambda\|_2$, we get:
\[
\frac{1}{\lambda}\|\nabla v_\lambda\|_2^2 \le C \|g\|_2 \|\nabla v_\lambda\|_2 + \frac{1}{\lambda} \|\nabla f\|_2 \|\nabla v_\lambda\|_2.
\]
Dividing by $\frac{1}{\lambda}\|\nabla v_\lambda\|_2$ (assuming it is non-zero), we arrive at:
\[
\|\nabla v_\lambda\|_2 \le C \lambda \|g\|_2 + \|\nabla f\|_2.
\]
This estimate reveals that $\|\nabla v_\lambda\|_2$ remains uniformly bounded as $\lambda \to 0$.
Now we recover the estimate for $u_\lambda$. Since $u_\lambda = \frac{v_\lambda+f}{\lambda}$, we have:
\[
\|\nabla u_\lambda\|_2 \le \frac{1}{\lambda} \left( \|\nabla v_\lambda\|_2 + \|\nabla f\|_2 \right).
\]
Since $\|\nabla v_\lambda\|_2$ is bounded, we conclude that:
\begin{equation}\label{est:grad_u}
\|\nabla u_\lambda\|_{L^2(\Omega)} \le \frac{C}{\lambda}\|Y\|_{\mathcal{H}}, \qquad \text{as }\lambda\to0^+.
\end{equation}
Thus, the resolvent growth is of order $\lambda^{-1}$.

Consequently, the abstract resolvent growth condition of Definition~\ref{eq:resolvent_growth}
holds with $\gamma=1$ for the generator $\mathcal A$ in this setting:
\begin{equation}\label{est:final_resolvent}
\|U_\lambda\|_{\mathcal H}\lesssim \lambda^{-1},
\qquad \lambda\to0^+.
\end{equation}

Invoking Theorem~\ref{thm:main}, we obtain the corresponding polynomial decay rate.

\begin{theorem}[Polynomial decay for the nonlocal Kelvin--Voigt model]\label{thm:wave_decay}
Let $(u_0,u_1)\in\mathcal H$ and let $U(t)=(u(t),u_t(t))$ be the (strong) solution of
\eqref{eq:wave_nonlinear}. Then there exists $C>0$ such that the energy satisfies
\[
E(t)\le \frac{C}{1+t},\qquad t\ge0.
\]
\end{theorem}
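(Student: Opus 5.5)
The rate $1/(1+t)$ corresponds to $\alpha=1$ in the Nakao-type Lemma~\ref{lem:nakao}. The plan is to derive the difference inequality
\[
E(t)^2\le C\bigl(E(t)-E(t+T)\bigr)
\]
for a fixed $T>0$ and then invoke that lemma. I will not go through the abstract coercivity theorem, because $\langle\mathcal A U,U\rangle_{\mathcal H}=\|\nabla v\|_2^4$ only controls the velocity component, so \eqref{coercive-abstract} fails on $\mathcal H$. The resolvent scaling \eqref{est:final_resolvent} only indicates the expected rate; the actual mechanism is an observability estimate that recovers the potential energy from the dissipation.

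I work with strong solutions, $U_0\in D(\mathcal A)$, so that \eqref{est:energy_dissipation} and the multiplier computations are justified. The final bound depends only on $E(0)$, so it passes to mild solutions by density and the contraction property of the semigroup. Set
\[
D:=E(t)-E(t+T)=\int_t^{t+T}\|\nabla u_t\|_2^4\,ds .
\]
By the pigeonhole principle I first choose $s_1\in[t,t+T/4]$ and $s_2\in[t+3T/4,t+T]$ with $\|\nabla u_t(s_i)\|_2^4\le \frac4T D$; this choice controls the boundary terms. Multiplying the equation by $u$ and integrating over $[s_1,s_2]$ gives
\[
\int_{s_1}^{s_2}\|\nabla u\|_2^2
=\int_{s_1}^{s_2}\|u_t\|_2^2-\Bigl[(u_t,u)\Bigr]_{s_1}^{s_2}
-\int_{s_1}^{s_2}\|\nabla u_t\|_2^2(\nabla u_t,\nabla u).
\]
Each term on the right is then bounded as follows.
\begin{itemize}
\item The kinetic term satisfies $\int\|u_t\|_2^2\le C_P^2\int\|\nabla u_t\|_2^2\le C_P^2T^{1/2}D^{1/2}$, by Poincaré and Hölder.
\item The boundary terms satisfy $|(u_t,u)(s_i)|\le C\|\nabla u_t(s_i)\|_2\|\nabla u(s_i)\|_2\le C_T D^{1/4}E(t)^{1/2}$.
\item The nonlinear term is at most $\int\|\nabla u_t\|_2^3\|\nabla u\|_2\le C_T E(t)^{1/2}D^{3/4}$.
\end{itemize}
Adding $\int_{s_1}^{s_2}\|u_t\|_2^2$ to both sides and using monotonicity of $E$ on the left-hand side, I get
\[
\tfrac T2\,E(t+T)\le C_T\bigl(D^{1/2}+E(t)^{1/2}D^{1/4}+E(t)^{1/2}D^{3/4}\bigr).
\]

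Next I write $E(t)=E(t+T)+D$. Young's inequality gives $E^{1/2}D^{1/4}\le\varepsilon E+C_\varepsilon D^{1/2}$ and $E^{1/2}D^{3/4}\le \varepsilon E+C_\varepsilon D^{3/2}$. Since $D\le E(0)$, the terms $D$ and $D^{3/2}$ are both bounded by $C(E(0))\,D^{1/2}$. Absorbing the $\varepsilon E(t)$ terms yields $E(t)\le C D^{1/2}$, that is, $E(t)^2\le C(E(t)-E(t+T))$. Lemma~\ref{lem:nakao} with exponent $1$ then gives $E(t)\le C/(1+t)$, with $C$ depending on $E(0)$.

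The main obstacle is controlling the boundary terms $(u_t,u)$ at the endpoints. A naive bound by $E(t)$ would leave a term that cannot be absorbed, and nothing else gives pointwise control of $\|\nabla u_t\|_2$. The selection of good times $s_1,s_2$, where $\|\nabla u_t\|_2^4$ is bounded by its average, is exactly what turns this into $E^{1/2}D^{1/4}$ and makes the Young absorption work. A secondary point is that the nonlinear term must be handled using only the $H^1$ control of $u_t$ supplied by the dissipation, which is why the estimate stays at the level of the weak energy.
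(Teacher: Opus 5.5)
Your proof is correct, and it follows a different route from the paper's.

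\textbf{The paper's route.} The paper computes the real resolvent, obtains $\|U_\lambda\|_{\mathcal H}\lesssim\lambda^{-1}$, and then invokes Theorem~\ref{thm:main}. The rigorous content of that theorem is the coercivity-based criterion. With $\alpha=3$ it would give $E'+cE^2\le0$ directly, without any observability step. As you point out, the hypothesis \eqref{coercive-abstract} fails for this operator: $\langle\mathcal A U,U\rangle_{\mathcal H}=\|\nabla v\|_2^4$ vanishes at every $(u,0)$ with $u\neq0$. Moreover, the bound $\|U_\lambda\|_{\mathcal H}\le\lambda^{-1}\|F\|_{\mathcal H}$ holds for any monotone operator with $0\in\mathcal A(0)$; testing the resolvent equation with $U_\lambda$ gives it. So that bound carries no rate information, and the paper's argument does not close as written.

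\textbf{Your route.} Your time-domain argument supplies the missing ingredient: an observability estimate that recovers $\|\nabla u\|_2^2$ from the quartic dissipation.
\begin{itemize}
\item The multiplier $u$ brings in the potential energy.
\item Selecting good times $s_1,s_2$ turns the boundary terms into $E^{1/2}D^{1/4}$.
\item Poincaré and Hölder give $\int\|u_t\|_2^2\lesssim D^{1/2}$.
\end{itemize}
Together these yield $E(t)^2\le C(E(t)-E(t+T))$, and Lemma~\ref{lem:nakao} then gives the $1/(1+t)$ rate.

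\textbf{Comparison.} Your approach costs an equation-specific multiplier computation and a constant that depends on $E(0)$. In exchange, it actually proves the theorem. Because your constant depends only on $E(0)$, your density extension to mild solutions is legitimate. That weak-solution extension is the advantage the paper attributes to its resolvent approach, but only your argument actually delivers it.
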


\begin{remark}[Optimality and Weak Solutions]
The decay rate $1/(1+t)$ is optimal for this model, as shown in \cite{Cavalcanti2025}. A crucial advantage of the resolvent approach presented here is that the constant $C$ in the resolvent estimate depends only on the energy norm of the data $Y$. This allows the decay result to be extended to **weak (mild) solutions** by a standard density argument, a feature that is often difficult to achieve using Lyapunov multipliers with higher-order corrections.
\end{remark}

\section{Application II: Wave Equation with Generalized Nonlocal Kelvin--Voigt Damping}
\label{sec:KV_nonlocal}

In this application, we illustrate the abstract resolvent principle in the
context of a wave equation with \emph{generalized nonlocal Kelvin--Voigt damping}, where the
viscous coefficient depends on a global quantity of the velocity gradient.
This model is a generalization of the previous one, which was previously investigated in \cite{Cavalcanti2025},
and fits naturally into the nonlinear semigroup framework described above.

\subsection{The model and assumptions}
\label{subsec:KV_nonlocal_model}

Let $\Omega\subset\R^n$ ($n\le 3$) be a bounded domain with smooth boundary.
We consider the initial-boundary value problem:
\begin{equation}\label{eq:KV_nonlocal}
\begin{cases}
u_{tt}-\Delta u-\operatorname{div}\!\left(
\rho\!\left(\|\nabla u_t(t)\|_{L^2(\Omega)}^2\right)\nabla u_t
\right)=0
&\text{in }\Omega\times(0,\infty),\\
u=0 &\text{on }\partial\Omega\times(0,\infty),\\
u(0)=u_0,\quad u_t(0)=u_1.
\end{cases}
\end{equation}

The energy associated with \eqref{eq:KV_nonlocal} is given by the sum of kinetic and potential energies:
\[
E(t)=\frac12\int_\Omega\big(|u_t(t)|^2+|\nabla u(t)|^2\big)\,dx.
\]

We assume the following hypotheses:
\begin{enumerate}
\item[(A1)] \textbf{Nonlocal coefficient.}
The function $\rho\in C^1([0,\infty))$ satisfies $\rho(0)=0$ and $\rho(s)>0$ for $s>0$. Furthermore, we assume polynomial degeneracy near the origin, i.e., there exist
$c_0,c_1>0$ and $\gamma\ge 1$ such that
\begin{equation}\label{cond:rho_KV}
c_0 s^\gamma \le \rho(s) \le c_1 s^\gamma,
\qquad s\in[0,1].
\end{equation}
Note that $\gamma=1$ corresponds to the case studied in Application I.

\item[(A2)] \textbf{Regularity of initial data.}
\[
(u_0,u_1)\in (H^2(\Omega)\cap H_0^1(\Omega))\times H_0^1(\Omega).
\]
\end{enumerate}

\subsection{Well-posedness of strong solutions}
\label{subsec:KV_nonlocal_wp}

Due to the presence of second-order spatial derivatives in the damping term (specifically $\text{div}(\nabla u_t)$),
the natural energy space $H_0^1(\Omega)\times L^2(\Omega)$ is not sufficient to
rigorously justify the integration by parts required in the energy estimates and the resolvent analysis below.
While the previous application allowed for an extension to weak solutions via density (due to the specific structure $\gamma=1$), the generalized nonlinearity here poses additional technical challenges.
Throughout this section, we therefore restrict ourselves to \emph{strong solutions}
corresponding to data satisfying \textup{(A2)}.

Under assumptions \textup{(A1)}--\textup{(A2)}, local existence and uniqueness of
strong solutions follow from a standard Galerkin scheme based on the eigenfunctions of
$-\Delta$ with Dirichlet boundary conditions, combined with the $C^1$-regularity
of $\rho$.
Moreover, differentiating the energy functional, we obtain the dissipation identity:
\begin{equation}\label{eq:KV_nonlocal_dissip}
E'(t)=-\rho(\|\nabla u_t(t)\|_2^2)\int_\Omega|\nabla u_t(t)|^2\,dx \asymp -\|\nabla u_t(t)\|_2^{2\gamma+2} \le 0.
\end{equation}
This dissipation prevents finite-time blow-up of the energy.
Consequently, strong solutions extend globally in time.
We refer to \cite{Cavalcanti2025} and related literature (e.g., \cite{SICON2003}) for complete proofs regarding global existence.

\subsection{Resolvent formulation}
\label{subsec:KV_nonlocal_resolvent}

Introducing the vector state $U=(u,v)=(u,u_t)$, equation \eqref{eq:KV_nonlocal} can be rewritten as
the first-order evolution problem
\begin{equation}\label{eq:KV_nonlocal_abstract}
U_t+\mathcal A(U)=0,
\end{equation}
where the nonlinear operator $\mathcal A:\mathcal D(\mathcal A)\subset\mathcal H\to\mathcal H$
is defined by
\[
\mathcal A(u,v)=\big(-v,\,-\Delta u-
\operatorname{div}(\rho(\|\nabla v\|_2^2)\nabla v)\big),
\]
with phase space $\mathcal H=H_0^1(\Omega)\times L^2(\Omega)$.

For $\lambda>0$ and a source term $F=(f,g)\in\mathcal H$, the resolvent equation
\[
(\lambda I+\mathcal A)U=F
\]
is equivalent to the system:
\begin{align}
\lambda u-v&=f, \label{eq:KV_res1}\\
\lambda v-\Delta u-
\operatorname{div}(\rho(\|\nabla v\|_2^2)\nabla v)&=g. \label{eq:KV_res2}
\end{align}

\subsection{Resolvent growth estimate}
\label{subsec:KV_nonlocal_growth}

We now derive the apriori estimate for the resolvent equation that connects to the polynomial stability.

\begin{proposition}\label{prop:KV_nonlocal_growth}
Let $U_\lambda=(u_\lambda,v_\lambda)$ be the solution of
\eqref{eq:KV_res1}--\eqref{eq:KV_res2}.
Then, as $\lambda\to 0^+$, the solution satisfies the growth estimate:
\[
\|U_\lambda\|_{\mathcal H}\le \frac{C}{\lambda}\|F\|_{\mathcal H}.
\]
\end{proposition}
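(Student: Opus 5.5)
The plan is to copy the energy argument used for Application I. The only change is that the nonlocal damping term now contributes a nonnegative quantity to the left-hand side, which will simply be discarded. Throughout we work with strong solutions of \eqref{eq:KV_res1}--\eqref{eq:KV_res2}, so that $u_\lambda,v_\lambda\in H_0^1(\Omega)$ and integration by parts is justified. Solvability for each $\lambda>0$ follows from maximal monotonicity of $\mathcal A$. From \eqref{eq:KV_res1} we have $u_\lambda=(v_\lambda+f)/\lambda$, so the problem reduces to bounding $\|\nabla v_\lambda\|_2$ uniformly in $\lambda\in(0,1]$.

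\textbf{Step 1 (testing).} Test \eqref{eq:KV_res2} with $v_\lambda$ in $L^2(\Omega)$ and integrate by parts. Using $\nabla u_\lambda=\lambda^{-1}(\nabla v_\lambda+\nabla f)$, this gives
\[
\lambda\|v_\lambda\|_2^2+\frac1\lambda\|\nabla v_\lambda\|_2^2+\rho(\|\nabla v_\lambda\|_2^2)\|\nabla v_\lambda\|_2^2
=\langle g,v_\lambda\rangle-\frac1\lambda\int_\Omega\nabla f\cdot\nabla v_\lambda\,dx .
\]
By (A1), $\rho\ge 0$ on $[0,\infty)$, so the third term on the left is nonnegative and can be dropped, together with $\lambda\|v_\lambda\|_2^2$. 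This dropping step is where the structure of $\rho$ enters. Note that only $\rho\ge0$ is used; neither the degeneracy exponent $\gamma$ nor the bounds \eqref{cond:rho_KV} play any role.

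\textbf{Step 2 (bound on $v_\lambda$).} Apply Cauchy--Schwarz and Poincar\'e ($\|v_\lambda\|_2\le C_P\|\nabla v_\lambda\|_2$), multiply by $\lambda$, and divide by $\|\nabla v_\lambda\|_2$. If $\nabla v_\lambda=0$ the bound is trivial. This yields
\[
\|\nabla v_\lambda\|_2\le C_P\lambda\|g\|_2+\|\nabla f\|_2\le C\|F\|_{\mathcal H}
\qquad\text{for }\lambda\le1 .
\]

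\textbf{Step 3 (conclusion).} First, $\|\nabla u_\lambda\|_2\le\lambda^{-1}(\|\nabla v_\lambda\|_2+\|\nabla f\|_2)\le C\lambda^{-1}\|F\|_{\mathcal H}$. Second, $\|v_\lambda\|_2\le C_P\|\nabla v_\lambda\|_2\le C\|F\|_{\mathcal H}\le C\lambda^{-1}\|F\|_{\mathcal H}$ for $\lambda\le1$. Adding the two estimates gives $\|U_\lambda\|_{\mathcal H}\le C\lambda^{-1}\|F\|_{\mathcal H}$.

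There is no real analytic obstacle. The only delicate point is justifying the test against $v_\lambda$, which needs $\operatorname{div}(\rho\nabla v_\lambda)$ paired with $v_\lambda$ in the $H^{-1}$--$H_0^1$ duality. This is fine for resolvent solutions with $v_\lambda\in H_0^1(\Omega)$, the natural domain of the monotone damping map $v\mapsto-\operatorname{div}(\rho(\|\nabla v\|_2^2)\nabla v)$ from $H_0^1(\Omega)$ to $H^{-1}(\Omega)$. Because the estimate ignores the damping term, the $\lambda^{-1}$ growth comes entirely from the relation $u_\lambda=(v_\lambda+f)/\lambda$. So the bound holds as stated, but it does not by itself detect $\gamma$; any rate information would have to come from the coercivity of $\mathcal A$ rather than from this proposition.
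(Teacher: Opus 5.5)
Your proof is correct and follows the paper's argument step for step. You test \eqref{eq:KV_res2} with $v_\lambda$, substitute $u_\lambda=(v_\lambda+f)/\lambda$, drop the nonnegative $\lambda\|v_\lambda\|_2^2$ and $\rho$-terms, bound $\|\nabla v_\lambda\|_2$ via Cauchy--Schwarz and Poincar\'e, and then recover $\|\nabla u_\lambda\|_2\le C\lambda^{-1}\|F\|_{\mathcal H}$. Your extra details (restricting to $\lambda\le 1$, the case $\nabla v_\lambda=0$, the explicit bound on $\|v_\lambda\|_2$) only make explicit what the paper leaves implicit. Your closing remark that the estimate uses only $\rho\ge 0$, and so carries no information about $\gamma$, is accurate.
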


\begin{proof}
Testing the second equation \eqref{eq:KV_res2} with $v_\lambda$ in $L^2(\Omega)$, we obtain:
\begin{equation}\label{eq:resolvent_test_app2}
\lambda \|v_\lambda\|_2^2 + \langle -\Delta u_\lambda, v_\lambda \rangle + \rho(\|\nabla v_\lambda\|_2^2) \|\nabla v_\lambda\|_2^2 = \langle g, v_\lambda \rangle.
\end{equation}
Using integration by parts and the relation $v_\lambda = \lambda u_\lambda - f$ from \eqref{eq:KV_res1} (which implies $u_\lambda = \frac{v_\lambda+f}{\lambda}$), we examine the cross-term:
\begin{align*}
\langle -\Delta u_\lambda, v_\lambda \rangle &= \int_\Omega \nabla \left(\frac{v_\lambda+f}{\lambda}\right) \cdot \nabla v_\lambda \, dx \\
&= \frac{1}{\lambda}\|\nabla v_\lambda\|_2^2 + \frac{1}{\lambda}\int_\Omega \nabla f \cdot \nabla v_\lambda \, dx.
\end{align*}
Substituting this back into \eqref{eq:resolvent_test_app2}:
\begin{equation}
\lambda \|v_\lambda\|_2^2 + \frac{1}{\lambda}\|\nabla v_\lambda\|_2^2 + \rho(\|\nabla v_\lambda\|_2^2) \|\nabla v_\lambda\|_2^2 = \langle g, v_\lambda \rangle - \frac{1}{\lambda}\int_\Omega \nabla f \cdot \nabla v_\lambda \, dx.
\end{equation}

As $\lambda \to 0^+$, the term $\frac{1}{\lambda}\|\nabla v_\lambda\|_2^2$ dominates the coercivity. Discarding the non-negative terms $\lambda \|v_\lambda\|_2^2$ and $\rho(\|\nabla v_\lambda\|_2^2) \|\nabla v_\lambda\|_2^2$ on the left-hand side, we apply the Cauchy-Schwarz and Poincaré inequalities:
\begin{align*}
\frac{1}{\lambda}\|\nabla v_\lambda\|_2^2 &\le \|g\|_2 \|v_\lambda\|_2 + \frac{1}{\lambda} \|\nabla f\|_2 \|\nabla v_\lambda\|_2 \\
&\le C_P \|g\|_2 \|\nabla v_\lambda\|_2 + \frac{1}{\lambda} \|\nabla f\|_2 \|\nabla v_\lambda\|_2.
\end{align*}
Dividing by $\frac{1}{\lambda}\|\nabla v_\lambda\|_2$ (assuming $v_\lambda \neq 0$), we arrive at:
\begin{equation}
\|\nabla v_\lambda\|_2 \le C_P \lambda \|g\|_2 + \|\nabla f\|_2.
\end{equation}
This shows that $\|\nabla v_\lambda\|_2$ remains uniformly bounded as $\lambda \to 0^+$.

To recover the estimate for $u_\lambda$, we use $u_\lambda = \frac{v_\lambda+f}{\lambda}$:
\begin{equation}
\|\nabla u_\lambda\|_2 \le \frac{1}{\lambda} \left( \|\nabla v_\lambda\|_2 + \|\nabla f\|_2 \right).
\end{equation}
Since $\|\nabla v_\lambda\|_2$ is uniformly bounded, we conclude that:
\begin{equation}
\|\nabla u_\lambda\|_{L^2(\Omega)} \le \frac{C}{\lambda}\|F\|_{\mathcal{H}}, \qquad \text{as } \lambda\to0^+.
\end{equation}
Consequently, the full resolvent growth is of order $\lambda^{-1}$, fulfilling the abstract resolvent condition:
\begin{equation}
\|U_\lambda\|_{\mathcal H}\le \frac{C}{\lambda} \|F\|_{\mathcal H}, \qquad \text{as } \lambda\to0^+.
\end{equation}
\end{proof}

\subsection{Polynomial decay}
\label{subsec:KV_nonlocal_decay}

The resolvent growth established in Proposition~\ref{prop:KV_nonlocal_growth} is of order
$\lambda^{-1}$ (which corresponds to index 1 in the linear scale).
On the other hand, we must identify the homogeneity of the operator. The nonlinear Kelvin--Voigt term is
\[
\mathcal{N}(v) = -\operatorname{div}\left(\rho(\|\nabla v\|_2^2)\nabla v\right).
\]
Since $\rho(s) \sim s^\gamma$, we have $\rho(\|\nabla v\|^2) \sim \|\nabla v\|^{2\gamma}$. Thus, the damping term scales as:
\[
\mathcal{N}(\lambda v) \approx \|\lambda \nabla v\|^{2\gamma} (\lambda \nabla v) = \lambda^{2\gamma+1} \mathcal{N}(v).
\]
This means the operator is homogeneous of degree $p=2\gamma+1$.

Applying the abstract decay result of Section~\ref{sec:main} (Theorem \ref{thm:main}), with resolvent index 1 and homogeneity $p$, we obtain the following decay rate. Note that for $\gamma=1$ (cubic damping), we recover $p=3$ and decay $t^{-1}$. In general:

\begin{theorem}\label{thm:KV_nonlocal_decay}
Let $u$ be a global strong solution of \eqref{eq:KV_nonlocal} with initial data satisfying (A2).
Then the energy decays polynomially:
\[
E(t)\le \frac{C}{(1+t)^{1/\gamma}}, \quad \forall t \ge 0.
\]
\end{theorem}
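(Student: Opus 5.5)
We aim for the rate $E(t)\le C(1+t)^{-1/\gamma}$ by reaching a Nakao-type inequality (Lemma~\ref{lem:nakao}) with exponent $\gamma$. The coercivity \eqref{coercive-abstract} cannot hold on the full phase space $\mathcal H$, since the damping $\langle \mathcal A(U),U\rangle_{\mathcal H}=\rho(\|\nabla v\|_2^2)\|\nabla v\|_2^2$ only sees $v$. So we do not apply the abstract theorem directly. Instead we show that over each time window the dissipation controls the full energy. This is the mismatch flagged in the Introduction as failure of dissipative alignment.

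\textbf{Step 1: basic bounds.} Write $D(t):=\rho(\|\nabla u_t\|_2^2)\|\nabla u_t\|_2^2$, so that $E'=-D$ by \eqref{eq:KV_nonlocal_dissip}. The first observation is that $\|\nabla u_t(t)\|_2$ stays bounded for all time for strong solutions. To see this, use the regular energy $\tfrac12(\|\nabla u_t\|_2^2+\|\Delta u\|_2^2)$, which is nonincreasing by the same computation as \eqref{RIE}. Hence (A1) applies with $s$ in a bounded set, giving $D(t)\asymp \|\nabla u_t\|_2^{2\gamma+2}$ with constants depending on the data.

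\textbf{Step 2: multiplier on a window $[t,t+T]$.} Multiply the equation by $u$ and integrate over $\Omega\times[t,t+T]$. This gives
\[
\int_t^{t+T}\|\nabla u\|_2^2\,ds \le \int_t^{t+T}\|u_t\|_2^2\,ds+\big[\,|(u_t,u)|\,\big]_t^{t+T}+\int_t^{t+T}\rho(\|\nabla u_t\|_2^2)\,|(\nabla u_t,\nabla u)|\,ds .
\]
The boundary terms are bounded by $CE(t)$. For the kinetic part, Poincaré and Hölder give
\[
\int\|u_t\|_2^2\le C\int\|\nabla u_t\|_2^2\le C T^{\gamma/(\gamma+1)}\Big(\int D\Big)^{1/(\gamma+1)} .
\]
For the nonlocal term, Hölder gives
\[
\rho(\|\nabla u_t\|_2^2)\,\|\nabla u_t\|_2\,\|\nabla u\|_2 \le C D^{(2\gamma+1)/(2\gamma+2)}E^{1/2},
\]
whose time integral is at most $C\big(\int D\big)^{(2\gamma+1)/(2\gamma+2)}E(t)^{1/2}$. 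Adding the kinetic part and using that $E$ is nonincreasing, we get
\[
T\,E(t+T)\le \int_t^{t+T}E \le C\Big[(E(t)-E(t+T))^{1/(\gamma+1)}+E(t)-E(t+T)+(E(t)-E(t+T))^{\frac{2\gamma+1}{2\gamma+2}}E(t)^{1/2}\Big].
\]

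\textbf{Step 3: the main obstacle.} The boundary term $|(u_t,u)|\le CE$ is linear in $E$ and is not small relative to $\int_t^{t+T} E$. Handling it is where I expect the real difficulty. My first attempt is the standard one: bound it by $C(\|u_t(t)\|_2^2+\|u_t(t+T)\|_2^2)^{1/2}E^{1/2}$ and estimate the pointwise kinetic energy through $E'$. If that fails, I will take $T$ large and use $\int_t^{t+T}E\ge TE(t+T)$, absorbing $2CE(t)$ via $E(t)\le E(t+T)+(E(t)-E(t+T))$.

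\textbf{Step 4: conclusion.} Once the boundary term is controlled, the dominant term for small energy is $(E(t)-E(t+T))^{1/(\gamma+1)}$. Raising to the power $\gamma+1$ gives
\[
E(t+T)^{\gamma+1}\le C\,(E(t)-E(t+T)) .
\]
A shifted version of Lemma~\ref{lem:nakao} with exponent $\alpha=\gamma$ then yields $E(t)\le C(1+t)^{-1/\gamma}$. As a consistency check, $\gamma=1$ gives the $t^{-1}$ rate of Theorem~\ref{thm:wave_decay}.
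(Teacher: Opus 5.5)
Your argument is correct once you commit to the second option in Step~3, and it takes a genuinely different route from the paper.

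\textbf{How the paper argues.} The paper combines Proposition~\ref{prop:KV_nonlocal_growth} (the bound $\|U_\lambda\|_{\mathcal H}\le C\lambda^{-1}$) with the observation that the damping is homogeneous of degree $2\gamma+1$, and then appeals to Theorem~\ref{thm:main}. Two points weaken this route:
\begin{itemize}
\item The proof of Proposition~\ref{prop:KV_nonlocal_growth} discards the $\rho$-term, so the bound carries no information on $\gamma$.
\item The only rigorous content of Theorem~\ref{thm:main} is the pointwise coercivity $\langle \xi,U\rangle_{\mathcal H}\ge m\|U\|_{\mathcal H}^{\alpha+1}$. Exactly as you observe, here $\langle\mathcal A(U),U\rangle_{\mathcal H}=\rho(\|\nabla v\|_2^2)\|\nabla v\|_2^2$ vanishes on $\{v=0\}$. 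Moreover $\mathcal A$ is not homogeneous, since its skew part is $1$-homogeneous.
\end{itemize}

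\textbf{What your route buys.} Your multiplier $u$ replaces the missing pointwise coercivity by coercivity averaged over a time window (equipartition), which is what the wave dynamics actually provides. Because the Kelvin--Voigt damping acts on all of $\Omega$, Poincar\'e's inequality suffices and no geometric condition enters. The cost is Step~1: the nonincreasing second-order energy keeps $\|\nabla u_t\|_2^2$ in a bounded range on which (A1) extends from $[0,1]$. This is where (A2) and the restriction to strong solutions are genuinely used, and your constants depend on $\|\Delta u_0\|_2$ and $\|\nabla u_1\|_2$. In short, the paper gives a brief appeal to its heuristic principle, while you give a self-contained proof of the stated rate using only Lemma~\ref{lem:nakao}.

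\textbf{Points to tidy up.}
\begin{itemize}
\item Option one of Step~3 does not close as written. The endpoint values $D(t)$, $D(t+T)$ are not controlled by $\Delta E:=E(t)-E(t+T)$, so you would need to choose the endpoints by a mean-value argument, or average in $t$.
\item Option two does close. With $\|w\|_2\le C_P\|\nabla w\|_2$, the boundary terms are bounded by $C_P\bigl(E(t)+E(t+T)\bigr)=C_P\bigl(2E(t+T)+\Delta E\bigr)$, with $C_P$ independent of $T$. Meanwhile $T$ enters the other terms only through the factors $T^{\gamma/(\gamma+1)}$ and $T^{1/(2\gamma+2)}$. So for $T\ge 2C_P$ the $E(t+T)$ part is absorbed by $\int_t^{t+T}E\ge TE(t+T)$.
\item Accordingly, the last display of Step~2 holds for $E(t+T)$ after this absorption, not for $\int_t^{t+T}E$ itself.
\item Since $\Delta E\le E(0)$ and $\gamma\ge 1$, both $\Delta E$ and $(\Delta E)^{(2\gamma+1)/(2\gamma+2)}E(t)^{1/2}$ are bounded by $C(\Delta E)^{1/(\gamma+1)}$.
\item The inequality $E(t)\le E(t+T)+\Delta E$ upgrades $E(t+T)^{\gamma+1}\le C\Delta E$ to $E(t)^{\gamma+1}\le C'\Delta E$. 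Lemma~\ref{lem:nakao} then applies exactly as stated, so no shifted version is needed.
\end{itemize}
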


\begin{remark}
The decay rate in Theorem~\ref{thm:KV_nonlocal_decay} generalizes the result of the previous section. It coincides with that of
frictional damping with polynomial nonlinearity of degree $2\gamma+1$.
However, unlike Kirchhoff-type damping (which affects the elastic term), the Kelvin--Voigt mechanism depends directly on
the velocity gradient and therefore remains active whenever kinetic energy is
present, avoiding the ``blind spot'' associated with purely potential states.
\end{remark}

\section{Application III: Wave Equation with Nonlocal Degenerate Damping on the Velocity}
\label{sec:app_nonlocal_degenerate}

In this section, we further illustrate the scope of our approach by considering
a class of wave-like equations with a \emph{nonlocal degenerate damping} acting on
the velocity, but without Kelvin--Voigt (higher-order) dissipation terms.
This model serves as an intermediate situation between localized Kirchhoff-type
damping and the nonlocal Kelvin--Voigt mechanism treated earlier.

\subsection{Abstract model}

Let $H$ be a real Hilbert space. We consider
\begin{equation}\label{eq:nonlocal_abs}
u_{tt}+Au+\rho(\|\mathcal C u_t\|_H^2)\,u_t=0,
\end{equation}
where:
\begin{itemize}
\item[(H1)] $A:D(A)\subset H\to H$ is a strictly positive self-adjoint operator with compact inverse.
We define the energy space $\mathcal H=D(A^{1/2})\times H$ endowed with the norm
$\|(u,v)\|_{\mathcal H}^2=\|A^{1/2}u\|_H^2+\|v\|_H^2$.

\item[(H2)] $\mathcal C\in\mathcal L(H)$ is a bounded observation operator. Typically, $\mathcal C=I$ or $\mathcal C=A^{1/2}$ in concrete PDE realizations.

\item[(H3)] The damping function $\rho:[0,\infty)\to[0,\infty)$ is continuous and satisfies
a degenerate power law near the origin: there exist $c_0, c_1 > 0$ and $\gamma > 0$ such that
\begin{equation}\label{eq:rho_deg_nonlocal}
c_0 s^\gamma \le \rho(s)\le c_1 s^\gamma,
\qquad s\in[0,1].
\end{equation}

\item[(H4)] (Integral observability in velocity). There exist $T>0$ and $c_0>0$
such that every (sufficiently regular) solution of \eqref{eq:nonlocal_abs} satisfies
\[
\int_t^{t+T} \|C u_t(\tau)\|_H^2\, d\tau
\;\ge\;
c_0 \int_t^{t+T} E(\tau)\, d\tau,
\qquad \forall t\ge 0,
\]
where $E(t)$ denotes the natural energy defined below.
\end{itemize}

\begin{remark}
If $\mathcal C=A^{1/2}$ and $\rho(s)=s$, then the damping term becomes
$\|A^{1/2}u_t\|_H^2\,u_t$. This is a typical Kirchhoff-type nonlocal mechanism where the friction coefficient depends on the elastic energy of the velocity.
\end{remark}

\subsection{Well-posedness}

The problem can be written as a first-order evolution equation $U_t+\mathcal A(U)=0$ on $\mathcal H$ with $U=(u,v)$ and
\[
\mathcal A(u,v)=(-v,\ Au+\rho(\|\mathcal C v\|_H^2)\,v).
\]
The operator $\mathcal{A}$ is the sum of a skew-adjoint part $\mathcal{A}_{skew}(u,v)=(-v, Au)$ and a nonlinear damping part $\mathcal{B}(u,v)=(0, \rho(\|\mathcal C v\|^2)v)$. Since $\rho(s) \ge 0$, $\mathcal{B}$ is monotone and continuous on $\mathcal{H}$. Thus, $\mathcal{A}$ is maximal monotone, and the Lumer-Phillips theorem (or its nonlinear version) guarantees that $-\mathcal{A}$ generates a contraction semigroup $S(t)$ on $\mathcal{H}$.

The energy of the system is given by
\[
E(t)=\frac12\Bigl(\|u_t(t)\|_H^2+\|A^{1/2}u(t)\|_H^2\Bigr).
\]
A standard computation yields the dissipation identity:
\begin{equation}\label{eq:diss_nonlocal}
E'(t)= -\rho(\|\mathcal C u_t(t)\|_H^2)\,\|u_t(t)\|_H^2 \asymp -\|\mathcal C u_t(t)\|_H^{2\gamma} \|u_t(t)\|_H^2 \le 0.
\end{equation}

\subsection{Resolvent Estimate}

We now apply our resolvent criterion to this abstract setting. We aim to estimate the growth of the solution to the resolvent equation $(\lambda I + \mathcal{A})U = F$.

\begin{proposition}\label{prop:resolvent_app3}
Let $U_\lambda=(u_\lambda, v_\lambda)$ be the solution to $(\lambda I + \mathcal{A})U_\lambda = F$ for $F=(f,g) \in \mathcal{H}$. Then, as $\lambda \to 0^+$, the solution satisfies the growth estimate:
\[
\|U_\lambda\|_{\mathcal H} \le \frac{C}{\lambda} \|F\|_{\mathcal H},
\]
where $C > 0$ depends on the operator $A$ but is independent of $\lambda$.
\end{proposition}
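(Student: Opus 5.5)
We test the second component of the resolvent system with $v_\lambda$, exactly as in the proof of Proposition~\ref{prop:KV_nonlocal_growth}. Here the Laplacian is replaced by the abstract operator $A$ and the Poincaré inequality by the strict positivity of $A$.

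Writing $(\lambda I+\mathcal A)U_\lambda=F$ componentwise gives
\[
\lambda u_\lambda - v_\lambda = f,\qquad \lambda v_\lambda + Au_\lambda + \rho(\|\mathcal C v_\lambda\|_H^2)\,v_\lambda = g .
\]
Existence of $U_\lambda\in D(\mathcal A)$ for every $\lambda>0$ follows from the maximal monotonicity of $\mathcal A$ stated above. Since $u_\lambda\in D(A)$ and $f\in D(A^{1/2})$, we have $v_\lambda=\lambda u_\lambda-f\in D(A^{1/2})$ and $u_\lambda=(v_\lambda+f)/\lambda$. This is what makes the pairing below meaningful. Taking the $H$-inner product of the second equation with $v_\lambda$ and writing $\langle Au_\lambda,v_\lambda\rangle=\langle A^{1/2}u_\lambda,A^{1/2}v_\lambda\rangle$, we obtain
\[
\lambda\|v_\lambda\|_H^2+\frac1\lambda\|A^{1/2}v_\lambda\|_H^2+\rho(\|\mathcal C v_\lambda\|_H^2)\|v_\lambda\|_H^2
=\langle g,v_\lambda\rangle_H-\frac1\lambda\langle A^{1/2}f,A^{1/2}v_\lambda\rangle_H .
\]

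The first and third terms on the left are nonnegative, since $\rho\ge0$ by (H3). Discarding them, we will use Cauchy--Schwarz together with the bound $\|v\|_H\le \mu_1^{-1/2}\|A^{1/2}v\|_H$, where $\mu_1>0$ is the bottom of the spectrum of $A$; this bound comes from (H1). The result is
\[
\frac1\lambda\|A^{1/2}v_\lambda\|_H^2\le \mu_1^{-1/2}\|g\|_H\|A^{1/2}v_\lambda\|_H+\frac1\lambda\|A^{1/2}f\|_H\|A^{1/2}v_\lambda\|_H .
\]
Dividing by $\lambda^{-1}\|A^{1/2}v_\lambda\|_H$ gives
\[
\|A^{1/2}v_\lambda\|_H\le \mu_1^{-1/2}\lambda\|g\|_H+\|A^{1/2}f\|_H\le C\|F\|_{\mathcal H}\qquad (0<\lambda\le1).
\]
If $A^{1/2}v_\lambda=0$, this bound holds trivially. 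Consequently $\|v_\lambda\|_H\le C\|F\|_{\mathcal H}$ as well.

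Finally, from $u_\lambda=(v_\lambda+f)/\lambda$ we get
\[
\|A^{1/2}u_\lambda\|_H\le \lambda^{-1}\bigl(\|A^{1/2}v_\lambda\|_H+\|A^{1/2}f\|_H\bigr)\le C\lambda^{-1}\|F\|_{\mathcal H}.
\]
Combining this with the bound on $\|v_\lambda\|_H$ and using $1\le\lambda^{-1}$ for $\lambda\le1$ yields $\|U_\lambda\|_{\mathcal H}\le C\lambda^{-1}\|F\|_{\mathcal H}$. The constant depends only on $\mu_1$, hence only on $A$.

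The only delicate point is the justification of the identity $\langle Au_\lambda,v_\lambda\rangle=\lambda^{-1}\bigl(\|A^{1/2}v_\lambda\|^2+\langle A^{1/2}f,A^{1/2}v_\lambda\rangle\bigr)$. It requires $v_\lambda\in D(A^{1/2})$, which holds because $U_\lambda\in D(\mathcal A)$ and $f\in D(A^{1/2})$. The nonlinear damping term is harmless because it has a favorable sign and is simply dropped. Neither (H4) nor the bounded operator $\mathcal C$ of (H2) plays any role at this level.
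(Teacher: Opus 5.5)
Your proposal is correct and follows the paper's proof essentially step by step. Like the paper, you test the second equation with $v_\lambda$, rewrite the cross term via $u_\lambda=(v_\lambda+f)/\lambda$, drop the nonnegative terms, and apply a Poincaré-type bound before recovering $u_\lambda$; the differences are only in details the paper leaves implicit, namely taking that bound from the spectral bottom $\mu_1$ rather than the compact inverse, and handling explicitly the domain issue, the restriction $0<\lambda\le 1$, and the case $A^{1/2}v_\lambda=0$.
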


\begin{proof}
The resolvent system reads:
\begin{align}
\lambda u_\lambda - v_\lambda &= f, \label{eq:res_nl_1} \\
\lambda v_\lambda + A u_\lambda + \rho(\|\mathcal C v_\lambda\|_H^2) v_\lambda &= g. \label{eq:res_nl_2}
\end{align}
Taking the inner product of the second equation \eqref{eq:res_nl_2} with $v_\lambda$ in $H$, we obtain:
\begin{equation}\label{eq:resolvent_test_app3}
\lambda \|v_\lambda\|_H^2 + \langle A u_\lambda, v_\lambda \rangle_H + \rho(\|\mathcal C v_\lambda\|_H^2) \|v_\lambda\|_H^2 = \langle g, v_\lambda \rangle_H.
\end{equation}
From the first equation \eqref{eq:res_nl_1}, we have $u_\lambda = \frac{v_\lambda + f}{\lambda}$. Since $A$ is a positive self-adjoint operator, we can rewrite the cross-term using fractional powers:
\begin{align*}
\langle A u_\lambda, v_\lambda \rangle_H &= \langle A^{1/2} u_\lambda, A^{1/2} v_\lambda \rangle_H \\
&= \left\langle A^{1/2} \left(\frac{v_\lambda + f}{\lambda}\right), A^{1/2} v_\lambda \right\rangle_H \\
&= \frac{1}{\lambda} \|A^{1/2} v_\lambda\|_H^2 + \frac{1}{\lambda} \langle A^{1/2} f, A^{1/2} v_\lambda \rangle_H.
\end{align*}
Substituting this back into \eqref{eq:resolvent_test_app3}, we get the fundamental identity:
\begin{equation}
\lambda \|v_\lambda\|_H^2 + \frac{1}{\lambda} \|A^{1/2} v_\lambda\|_H^2 + \rho(\|\mathcal C v_\lambda\|_H^2) \|v_\lambda\|_H^2 = \langle g, v_\lambda \rangle_H - \frac{1}{\lambda} \langle A^{1/2} f, A^{1/2} v_\lambda \rangle_H.
\end{equation}

As $\lambda \to 0^+$, the singular term $\frac{1}{\lambda} \|A^{1/2} v_\lambda\|_H^2$ provides the dominant coercivity. Discarding the non-negative terms $\lambda \|v_\lambda\|_H^2$ and $\rho(\|\mathcal C v_\lambda\|_H^2) \|v_\lambda\|_H^2$ on the left-hand side, and applying the Cauchy-Schwarz inequality, we have:
\begin{align*}
\frac{1}{\lambda} \|A^{1/2} v_\lambda\|_H^2 &\le \|g\|_H \|v_\lambda\|_H + \frac{1}{\lambda} \|A^{1/2} f\|_H \|A^{1/2} v_\lambda\|_H.
\end{align*}
Since $A$ has a compact inverse (Assumption H1), there exists a Poincaré-type constant $C_P > 0$ such that $\|v_\lambda\|_H \le C_P \|A^{1/2} v_\lambda\|_H$. Thus:
\begin{equation}
\frac{1}{\lambda} \|A^{1/2} v_\lambda\|_H^2 \le C_P \|g\|_H \|A^{1/2} v_\lambda\|_H + \frac{1}{\lambda} \|A^{1/2} f\|_H \|A^{1/2} v_\lambda\|_H.
\end{equation}
Dividing by $\frac{1}{\lambda} \|A^{1/2} v_\lambda\|_H$ (assuming $v_\lambda \neq 0$), we conclude:
\begin{equation}
\|A^{1/2} v_\lambda\|_H \le C_P \lambda \|g\|_H + \|A^{1/2} f\|_H.
\end{equation}
This crucial estimate shows that the "elastic" energy of the velocity, $\|A^{1/2} v_\lambda\|_H$, remains uniformly bounded as $\lambda \to 0^+$.

Finally, to estimate the displacement $u_\lambda$, we use $u_\lambda = \frac{v_\lambda + f}{\lambda}$:
\begin{align*}
\|A^{1/2} u_\lambda\|_H &\le \frac{1}{\lambda} \left( \|A^{1/2} v_\lambda\|_H + \|A^{1/2} f\|_H \right) \\
&\le \frac{1}{\lambda} \left( C_P \lambda \|g\|_H + 2\|A^{1/2} f\|_H \right).
\end{align*}
Since $\|v_\lambda\|_H \le C_P \|A^{1/2} v_\lambda\|_H$, it follows that the full norm $\|U_\lambda\|_{\mathcal{H}}^2 = \|A^{1/2} u_\lambda\|_H^2 + \|v_\lambda\|_H^2$ behaves as $\mathcal{O}(\lambda^{-1})$:
\begin{equation}
\|U_\lambda\|_{\mathcal H} \le \frac{C}{\lambda} \|F\|_{\mathcal H}, \qquad \text{as } \lambda \to 0^+.
\end{equation}
\end{proof}

This confirms that the resolvent growth index is $\mu = 1$ (standard hyperbolic scaling).

\subsection{Polynomial decay rate}

To derive the decay rate from our main theorem, we need to identify the homogeneity of the damping term. The nonlinearity $\mathcal{N}(v) = \rho(\|\mathcal C v\|^2)v$ behaves like $\|v\|^{2\gamma} v$ (assuming $\mathcal{C}$ is an isomorphism or bounded below on the relevant subspace). This corresponds to a homogeneity degree $p = 2\gamma + 1$.

However, in this abstract setting, we must assume that the dissipation is "effective" in controlling the energy. This corresponds to an observability inequality.

\begin{theorem}
Assume that the linear part of the system is observable through the operator $\mathcal{C}$ in the sense that the damping term controls the energy over time (specifically, assume an observability inequality of the type $\int_0^T \|\mathcal{C} u_t\|^2 dt \ge c E(0)$ for the linear conservative problem, or equivalent control).
Then, for any $U_0 \in \mathcal{H}$, the energy of the solution satisfies:
\begin{equation}\label{eq:decay_nonlocal}
E(t)\le \frac{C}{(1+t)^{1/\gamma}},\qquad t\ge0.
\end{equation}
\end{theorem}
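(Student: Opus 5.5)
The plan is to derive a Nakao-type difference inequality of the form $E(t)^{1+\gamma}\le C\,(E(t)-E(t+T))$ and then invoke Lemma~\ref{lem:nakao} with exponent $\gamma$, which gives exactly $E(t)\le C_T(1+t)^{-1/\gamma}$. The first step is to write the dissipation identity \eqref{eq:diss_nonlocal} on $[t,t+T]$:
\[
E(t)-E(t+T)=\int_t^{t+T}\rho(\|\mathcal C u_t\|_H^2)\,\|u_t\|_H^2\,d\tau .
\]
Since $E$ is nonincreasing and $\|\mathcal C u_t\|_H^2\le \|\mathcal C\|^2\,2E(0)$, we can normalize (or use continuity and positivity of $\rho$ on compacts) so that $\rho(s)\ge c\,s^\gamma$ holds on the whole relevant range. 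Assuming, as in the remark preceding the theorem, that $\mathcal C$ is bounded below on the relevant subspace, so that $\|u_t\|_H\ge c\|\mathcal C u_t\|_H$, the dissipation then dominates $c\int_t^{t+T}\|\mathcal C u_t\|_H^{2\gamma+2}\,d\tau$.

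The second step converts the observability hypothesis into control of the energy. Using (H4) (or the observability assumption in the theorem transferred to the damped problem), together with $E(t+T)\le E(\tau)$ on the window, we get
\[
T\,E(t+T)\le \int_t^{t+T}E\,d\tau\le C\int_t^{t+T}\|\mathcal C u_t\|_H^2\,d\tau .
\]
Hölder's inequality with exponents $\gamma+1$ and $(\gamma+1)/\gamma$ then gives
\[
\int_t^{t+T}\|\mathcal C u_t\|^2\le T^{\gamma/(\gamma+1)}\Big(\int_t^{t+T}\|\mathcal C u_t\|^{2\gamma+2}\Big)^{1/(\gamma+1)}\le C\,(E(t)-E(t+T))^{1/(\gamma+1)}.
\]
Hence $E(t+T)^{1+\gamma}\le C\,(E(t)-E(t+T))$.

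To reach the form required by Lemma~\ref{lem:nakao}, with $E(t)$ on the left, I would observe that either $E(t+T)\ge E(t)/2$, in which case the inequality transfers with a constant $2^{1+\gamma}$, or $E(t)-E(t+T)\ge E(t)/2\ge c\,E(t)^{1+\gamma}/E(0)^{\gamma}$. In both cases we obtain \eqref{eq:difference_inequality}, and the lemma concludes the argument.

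The main obstacle is the transfer of observability from the linear conservative problem to the nonlinear damped one. I would write $u=w+z$, where $w$ solves the conservative problem with the same data on $[t,t+T]$ and $z$ solves the problem forced by $-\rho(\|\mathcal C u_t\|^2)u_t$ with zero data. The energy of $z$ is bounded by $\big(\int_t^{t+T}\rho\,\|u_t\|\big)^2$. By Cauchy–Schwarz and the boundedness of $\rho$, this is at most $C\int_t^{t+T}\rho\,\|u_t\|^2=C\,(E(t)-E(t+T))$, which is in fact stronger than needed. This yields
\[
E(t)\le C\int_t^{t+T}\|\mathcal C u_t\|^2+C\,(E(t)-E(t+T)),
\]
and this inequality can be fed into the same Hölder step. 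For weak data, the result is extended by density using the contraction property of $S(t)$, since all the constants depend only on $E(0)$.
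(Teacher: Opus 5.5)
Your proof is correct, and it takes a genuinely different route from the paper. In fact, the paper gives no time-domain argument for this statement. It reads the rate off Theorem~\ref{thm:main}: the damping is declared homogeneous of degree $p=2\gamma+1$, whence $t^{-2/(p-1)}=t^{-1/\gamma}$. The resolvent bound of Proposition~\ref{prop:resolvent_app3} (which is $O(\lambda^{-1})$ whatever $\gamma$ is) and the observability hypothesis are only mentioned. The rigorous content behind Theorem~\ref{thm:main} is the coercivity $\langle \xi,U\rangle_{\mathcal H}\ge m\|U\|_{\mathcal H}^{\alpha+1}$ for $\xi\in\mathcal A(U)$. This cannot hold here, since $\langle \mathcal A U,U\rangle_{\mathcal H}=\rho(\|\mathcal C v\|_H^2)\|v\|_H^2$ vanishes whenever $v=0$. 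So observability has to do the real work, and your chain does exactly that: dissipation $\gtrsim \|\mathcal C u_t\|_H^{2\gamma+2}$, then observability, then H\"older with exponents $\gamma+1$ and $(\gamma+1)/\gamma$, then the case split putting $E(t)$ on the left, then Lemma~\ref{lem:nakao} with exponent $\gamma$. Your Duhamel splitting $u=w+z$ with $E_z\le C\,(E(t)-E(t+T))$ correctly transfers observability from the conservative problem to the damped one. Because $E(t)-E(t+T)\le E(0)$, the extra term is absorbed into $C\,(E(t)-E(t+T))^{1/(\gamma+1)}$. The paper's scaling heuristic only identifies the exponent; your argument actually proves the bound, with constants depending on $E(0)$.

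Three small repairs are needed.
\begin{itemize}
\item The inequality $\|u_t\|_H\ge \|\mathcal C\|^{-1}\|\mathcal C u_t\|_H$ is just the boundedness (H2) of $\mathcal C$. Being bounded below would give the reverse inequality and is not needed.
\item The lower bound $\rho(s)\ge c\,s^\gamma$ on $[0,2\|\mathcal C\|^2E(0)]$ requires $\rho>0$ on $(0,\infty)$. Hypothesis (H3) of this section does not state this (the other applications do), so add it.
\item Do not invoke a contraction property of $S(t)$. For a general bounded $\mathcal C$, the map $v\mapsto \rho(\|\mathcal C v\|_H^2)v$ need not be monotone. For example, in $\mathbb{R}^2$ take $\mathcal C$ the projection onto the first coordinate, $\rho(s)=s$, $v=(1,1)$, $w=(0,10)$; the monotonicity pairing equals $-8$. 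No density step is needed anyway. For a mild solution, the forcing $\tau\mapsto\rho(\|\mathcal C u_t(\tau)\|_H^2)u_t(\tau)$ is continuous with values in $H$, so the energy identity and the Duhamel formula hold directly.
\end{itemize}
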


\begin{remark}
The exponent $1/\gamma$ corresponds exactly to the prediction of Theorem \ref{thm:main} with resolvent index 1 and homogeneity $p=2\gamma+1$. Specifically, the decay rate is $t^{-\frac{2}{p-1}} = t^{-\frac{2}{(2\gamma+1)-1}} = t^{-1/\gamma}$.
When $\gamma=1$ (cubic-type damping), we recover the standard rate $E(t)\lesssim (1+t)^{-1}$, in agreement with sharp results known in the literature for locally damped wave equations.
\end{remark}

\begin{remark}[On the structural friendliness of Application~III]
The nonlocal damping in Application~III depends exclusively on the velocity
variable $u_t$. As a consequence, the degeneracy of the coefficient occurs
exactly in the dissipated component of the state, producing a resolvent
structure that is fully compatible with the nonlinear Tauberian framework
developed in this paper. This contrasts sharply with the Kirchhoff model
(Application~IV below), where the degeneracy affects the elastic part of the energy
and therefore lies outside the scope of the resolvent blow-up mechanism.
\end{remark}

\section{Application IV: Polynomial Stability for Kirchhoff-Type Damping via Resolvent Estimates}
\label{sec:app_kirchhoff}

In this section, we address the wave equation with a \textit{nonlocal} Kirchhoff-type damping.
This model presents a subtle obstruction to the direct application of our Resolvent Blow-up Theorem: the linearized operator at the origin is the coercive Laplacian ($-\Delta$), which implies that the resolvent norm $\|U_\lambda\|$ remains uniformly bounded as $\lambda \to 0^+$.

While a bounded resolvent would typically suggest exponential stability in linear theory, the asymptotic behavior here is governed by the \textit{vanishing feedback} $\rho(\|\nabla u\|^2) \to 0$, not by a spectral singularity on the imaginary axis. To correctly capture the polynomial decay rate intrinsic to this degeneracy without conflicting with the bounded spectrum, we employ a \textit{perturbed observability} argument directly in the time domain. This method shares the same geometric roots (multipliers) as our resolvent analysis but handles the degenerate coefficient directly in the energy inequality.

Thus, the absence of resolvent blow-up in this model does not contradict the slow polynomial decay rate. The degeneration occurs in the nonlinear damping strength rather than in the linear spectrum. Our abstract resolvent theorem cannot be the primary tool here, and the correct decay mechanism must be recovered directly in the time domain.

\subsection{Problem Formulation}
Let $\Omega \subset \mathbb{R}^n$ be a bounded domain with smooth boundary $\Gamma$. We consider the system:
\begin{equation}\label{kirchhoff_prob}
\begin{cases}
u_{tt} - \Delta u + \rho(\|\nabla u(t)\|_{L^2(\Omega)}^2) a(x) u_t = 0 & \text{in } \Omega \times (0, \infty), \\
u = 0 & \text{on } \Gamma \times (0, \infty), \\
u(0) = u_0, \quad u_t(0) = u_1 & \text{in } \Omega,
\end{cases}
\end{equation}
where $a \in L^\infty(\Omega)$ is a nonnegative damping function satisfying the Geometric Control Condition (GCC) in a neighborhood $\omega$ of the boundary (or an appropriate subset where $a(x) \ge a_0 > 0$). The function $\rho: [0, \infty) \to [0, \infty)$ represents the nonlocal nonlinearity.

\subsection{Assumptions and Well-posedness}

We assume the following hypotheses:
\begin{enumerate}
    \item[(H1)] \textbf{Geometry:} $\Omega$ is a bounded domain with smooth boundary. The damping is localized in a neighborhood of the boundary $\omega = \Omega \cap \mathcal{O}$ satisfying the Geometric Control Condition (GCC). Specifically, we assume there exists a vector field $h \in C^1(\bar{\Omega})$ such that $h \cdot \nu \le 0$ on $\Gamma_0 = \partial\Omega \setminus \Gamma_1$ (where $\Gamma_1 \subset \partial\omega$).
    \item[(H2)] \textbf{Nonlocal Degeneracy:} The function $\rho \in C([0, \infty))$ satisfies $\rho(s) > 0$ for $s > 0$ and behaves polynomially near zero:
    \begin{equation}\label{cond:rho_deg}
        c_0 s^\gamma \le \rho(s) \le c_1 s^\gamma, \quad \forall s \in [0, \epsilon_0],
    \end{equation}
    for some $\gamma > 0$.
    \item[(H3)] \textbf{Localization:} The function $a \in L^\infty(\Omega)$ satisfies $a(x) \ge a_0 > 0$ in $\omega$.
\end{enumerate}

The natural energy is defined as $E(t) = \frac{1}{2} \int_\Omega (|u_t|^2 + |\nabla u|^2) dx$. The dissipation law is given by:
\begin{equation}\label{eq:dissip_kirchhoff}
    E'(t) = - \rho(\|\nabla u(t)\|_2^2) \int_\Omega a(x) |u_t|^2 dx \le 0.
\end{equation}

\begin{remark}[\textbf{On Well-posedness and the Exponent $\gamma$}]
The parameter $\gamma > 0$ determines the regularity of the nonlocal coefficient near the equilibrium. We distinguish two cases:

\textbf{Case 1: Lipschitz Continuity ($\gamma \ge 1$).}
If $\gamma \ge 1$, the function $s \mapsto \rho(s)$ is locally Lipschitz. Consequently, the nonlinear mapping $F(u, v) = (0, -\rho(\|\nabla u\|^2) a(x) v)^T$ is locally Lipschitz from the energy space $\mathcal{H} = H_0^1(\Omega) \times L^2(\Omega)$ into itself. Since the operator $\mathcal{A}_0 = \text{diag}(0, -\Delta)$ is skew-adjoint and $F$ is locally Lipschitz, the standard theory of semigroups for semilinear evolution equations guarantees the existence and uniqueness of a local strong solution for any initial data in $D(\mathcal{A}_0)$. Furthermore, since the energy is non-increasing ($E'(t) \le 0$), the solution cannot blow up in finite time, ensuring global existence.

\textbf{Case 2: Hölder Continuity ($0 < \gamma < 1$).}
If $0 < \gamma < 1$, the damping coefficient $\rho(s)$ is only Hölder continuous at the origin (infinite derivative). In this scenario, the Lipschitz uniqueness argument fails. However, the existence of global weak solutions can still be established using the Faedo-Galerkin method and compactness arguments, as the a priori energy bound $E(t) \le E(0)$ provides weak-star convergence. The uniqueness of solutions in this regime is a delicate open problem for hyperbolic systems, often requiring viscosity criteria.

\textbf{Implication for Decay:} The polynomial stability results derived in the subsequent section are based on a priori energy estimates. Therefore, they apply to \textit{any} strong solution of the system, regardless of whether $\gamma \ge 1$ (where uniqueness holds) or $0 < \gamma < 1$ (where we assume the existence of a solution).
\end{remark}

\subsection{The Structural Obstruction for the Resolvent Theorem}
\label{subsec:kirchhoff_obstruction}

Before proceeding to the estimates, let us clarify the connection with the abstract framework of this paper.
The system \eqref{kirchhoff_prob} can be formally rewritten as an evolution equation $U_t + \mathcal{A}(U) = 0$ for the state $U=(u,u_t)$. The resolvent equation $(\lambda I + \mathcal{A})U_\lambda = F$, for $U_\lambda = (u_\lambda, v_\lambda)$ and $F=(f,g)$, reads:
\[
\begin{cases}
\lambda u_\lambda - v_\lambda = f, \\
\lambda v_\lambda - \Delta u_\lambda + \rho(\|\nabla u_\lambda\|_2^2) a(x) v_\lambda = g.
\end{cases}
\]

\begin{remark}[Why Application IV lies outside the scope of the main theorem]
\label{rem:outside_scope}
It is crucial to emphasize that the Kirchhoff-type model treated here falls outside the range of applicability of our nonlinear Tauberian resolvent criterion (Theorem 2.2). The reason is purely structural: the nonlocal coefficient $\rho(\|\nabla u\|^2)$ depends on the \textit{elastic} variable $u$, while the dissipation acts only on the \textit{velocity} $u_t$.

Consequently, the resolvent equation does not display the homogeneous blow-up mechanism at the origin. The degeneracy takes place in the "wrong" component of the state variable. In fact, due to the coercivity of $-\Delta$, one can easily show that $\|U_\lambda\|$ remains bounded as $\lambda \to 0^+$.

Rather than being a limitation of the method, this mismatch highlights a fundamental difference between Kelvin-Voigt type damping (Applications I, II, and III, where the degeneracy acts directly on the dissipated variable) and Kirchhoff-type feedback (where it acts on the elastic part). This distinction explains why Kirchhoff models require geometric tools (GCC) and multiplier-based observability arguments in the time domain, serving as a perfect structural counterexample that clarifies the precise boundary of our abstract theory.
\end{remark}

To capture the decay explicitly without ambiguity, we perform a perturbed observability estimate directly in the time domain.

\subsection{Step 1: A geometric multiplier identity}

Let $M(u)=2h\cdot\nabla u+(n-1)u$. Multiplying the wave equation in \eqref{kirchhoff_prob} by $M(u)$ and integrating over $Q=\Omega\times(S,T)$, one obtains the standard geometric identity, which yields in particular
\begin{equation}\label{eq:energy_recovery}
C_0\int_S^T E(t)\,dt
\le C\bigl(E(S)+E(T)\bigr)+\Sigma+\mathcal I,
\end{equation}
where
\[
\Sigma:=\int_S^T\int_{\partial\Omega}\partial_\nu u\, (2h\cdot\nabla u)\,d\sigma\,dt,
\qquad
\mathcal I:=\left|\int_S^T\int_\Omega \rho(\|\nabla u\|_2^2)a(x)u_t\,M(u)\,dx\,dt\right|.
\]

When the damping region is internal (or when the ``bad'' boundary is covered), $\Sigma$ is nonpositive or can be handled by standard localization under (H1). Moreover, since $\|M(u)\|_{L^2(\Omega)}\le C\|\nabla u\|_{L^2(\Omega)}\le C\sqrt{E(t)}$, we immediately obtain the bound:
\begin{equation}\label{eq:I_estimate}
\mathcal I \le C\int_S^T \rho(\|\nabla u\|_2^2)\,\|u_t\|_{L^2(\omega)}\,\sqrt{E(t)}\,dt.
\end{equation}

\subsection{Step 2: Localization via Lions' cut-off}

Since the damping acts only in $\omega$, we use a cut-off function $\eta\in C_0^\infty(\Omega)$, with $0\le \eta\le1$, supported in $\omega$ and $\eta\equiv1$ on a smaller set $\omega_0\Subset\omega$ required by the GCC mechanism. Multiplying the wave equation by $\eta u$ and integrating by parts over $\Omega\times(S,T)$ yields a localized equipartition identity:
\begin{equation}\label{eq:equipartition}
\int_S^T\int_\Omega \eta|\nabla u|^2\,dx\,dt
\le \int_S^T\int_\Omega \eta|u_t|^2\,dx\,dt
 + C\int_S^T\int_\Omega |u|^2\,dx\,dt
 + C\bigl(E(S)+E(T)\bigr).
\end{equation}
This allows one to replace the localized potential energy in the geometric identity by localized kinetic energy inside $\omega$, at the price of the additional lower-order term $\int_S^T\|u(t)\|_2^2\,dt$.

\subsection{Step 3: Elimination of the lower-order term}

\begin{remark}[Dimension restriction]
In this step, we rely on the elliptic estimate obtained by solving $-\Delta\phi = u$ with homogeneous Dirichlet boundary conditions (see \cite{Conrad-Rao}) for details. The regularity $\phi\in H^2(\Omega)\hookrightarrow L^\infty(\Omega)$ holds only in dimensions $n\le 3$, which justifies restricting the spatial dimension to $n \le 3$ for this specific step.
\end{remark}

To remove the term $\int_S^T\|u(t)\|_2^2\,dt$ without resorting to contradiction arguments (which fail here, as explained in Remark \ref{rem:failure_UCP}), we use an elliptic multiplier. Let $\phi$ solve $-\Delta\phi=u$ in $\Omega$, with $\phi=0$ on $\partial\Omega$. One obtains:
\begin{equation}\label{eq:L2_elim}
\int_S^T\|u(t)\|_2^2\,dt
\le C\Bigl(E(S)+E(T)\Bigr)
 + C\int_S^T \rho(\|\nabla u\|_2^2)\int_\Omega a(x)\,|u_t|\,|\phi|\,dx\,dt.
\end{equation}
In dimensions $n\le3$, Sobolev embedding yields $\|\phi\|_\infty\le C\|\nabla u\|_2$. Hence, the last term in \eqref{eq:L2_elim} is bounded by the perturbation term already present in \eqref{eq:I_estimate}. Substituting back into \eqref{eq:energy_recovery} yields the clean perturbed observability estimate:
\begin{equation}\label{est:obs_main}
\int_S^T E(t)\,dt
\le C\bigl(E(S)+E(T)\bigr)
 + C\int_S^T \rho(\|\nabla u\|_2^2)\,\|u_t\|_{L^2(\omega)}\,\sqrt{E(t)}\,dt.
\end{equation}

\subsection{Decay rate and the Failure of UCP}

Combining \eqref{est:obs_main} with the dissipation identity \eqref{eq:dissip_kirchhoff}, and using $\rho(s) \sim s^\gamma$ together with $\|\nabla u\|_2^2\le 2E(t)$, the effective differential structure takes the form $E'(t) \le -C E(t)^{2\gamma+1}$. Solving this differential inequality yields the polynomial decay:
\begin{equation}\label{eq:kirchhoff_decay}
E(t)\le \frac{C}{(1+t)^{1/(2\gamma)}},\qquad t\ge0.
\end{equation}

\begin{remark}[Why Standard Contradiction Arguments Fail]
\label{rem:failure_UCP}
It is crucial to emphasize why the classical approach based on contradiction arguments and the Unique Continuation Principle (UCP) fails for this Kirchhoff-type model with degenerate damping ($\rho(0)=0$).

In a standard stability proof, one typically introduces a normalized sequence $v_n = u_n / \sqrt{E_n(0)}$ and passes to the limit. The key step involves showing that the limit function $v$ satisfies $v_t = 0$ in the control region $\omega$ due to the dissipation.
However, in our case, the damping term is $\rho(\|\nabla u_n(t)\|^2)a(x)u_{n,t}$. Since we are investigating stability near equilibrium, $\|\nabla u_n\|^2 \to 0$, which implies $\rho(\|\nabla u_n\|^2) \to 0$.

Consequently, the fact that the dissipation term tends to zero in the distributional sense does \textit{not} imply that $v_t$ vanishes in $\omega$; it vanishes simply because the coefficient $\rho$ collapses. Lacking the crucial information that $v_t = 0$ on $\omega$, one cannot apply Holmgren’s Uniqueness Theorem or the geometric UCP to conclude that $v \equiv 0$. The quantitative multiplier approach developed above is therefore strictly necessary to circumvent this structural obstacle.
\end{remark}

\begin{remark}[Why Application IV lies outside the scope of the main theorem]
\label{rem:outside_scope}
It is worth emphasizing that the Kirchhoff-type model treated in Application~IV falls
outside the range of applicability of our nonlinear Tauberian resolvent criterion.
The reason is structural: the nonlocal coefficient $\rho(\|\nabla u\|^2)$ depends on the
elastic variable $u$, while the dissipation acts only on the velocity $u_t$. Consequently,
the resolvent equation $(\lambda I+\mathcal A)U_\lambda=F$ does not display the homogeneous
blow-up mechanism that underlies Theorem~\ref{thm:main}. In fact, the degeneracy takes place
in the wrong component of the state variable, and the resolvent remains insufficiently
coercive to capture the decay mechanism.

Rather than being a limitation of the method, this mismatch highlights a fundamental
difference between Kelvin--Voigt type damping (where the degeneracy acts directly on the
dissipated variable) and Kirchhoff-type feedback (where it acts on the elastic part).
This distinction explains why Application~V falls naturally within the Tauberian framework,
while Application~IV requires geometric tools (GCC) and multiplier-based observability
arguments. Thus Application~IV serves as a structural counterexample that clarifies the
precise scope of the nonlinear resolvent theory developed in this work.
\end{remark}

\section{Application V: The Case of Localized Nonlocal Kelvin--Voigt Damping}
\label{sec:app_nonlocal_KV}

This section addresses the most technically delicate model considered in the paper:
a wave equation with \emph{localized Kelvin--Voigt damping} whose coefficient is
\emph{nonlocal} (it depends on a global norm of $\nabla u_t$). The main difficulty is
that the standard geometric multiplier method interacts with the Kelvin--Voigt term
through an integration by parts which produces second-order spatial derivatives of $u$.
As a consequence, polynomial decay can be established rigorously only for sufficiently
regular (strong) solutions, unless additional smoothing or viscosity mechanisms are introduced.
Here, we show how our abstract resolvent framework provides a cleaner path to the optimal decay rate, completely bypassing the need for higher-order spatial multipliers.

\subsection{Model and Assumptions}

We consider the system:
\begin{equation}\label{eq:KV_prob}
\begin{cases}
u_{tt}-\Delta u-\operatorname{div}\!\Big(\rho(\|\nabla u_t(t)\|_{L^2(\Omega)}^2)\,a(x)\,\nabla u_t\Big)=0
& \text{in }\Omega\times(0,\infty),\\
u=0 & \text{on }\partial\Omega\times(0,\infty),\\
u(0)=u_0,\quad u_t(0)=u_1.
\end{cases}
\end{equation}

We assume the following hypotheses on the domain and coefficients:

\begin{enumerate}
    \item[(H1)] \textbf{Geometry:} $\Omega \subset \mathbb{R}^n$ is a bounded domain with smooth boundary $\partial\Omega$.
    \item[(H2)] \textbf{Localized Damping Coefficient:} The function $a: \Omega \to [0, \infty)$ belongs to $W^{1,\infty}(\Omega)$. There exists an open subset $\omega \subset \Omega$ satisfying the Geometric Control Condition (GCC) such that $a(x) \ge a_0 > 0$ almost everywhere in $\omega$. Furthermore, to ensure the regularity of solutions and the well-posedness of the operator, we assume the condition:
    \begin{equation}\label{cond:gradient_a}
        |\nabla a(x)|^2 \le M a(x), \quad \text{for a.e. } x \in \Omega,
    \end{equation}
    for some constant $M > 0$.
    \item[(H3)] \textbf{Nonlinearity:} The function $\rho \in C^1([0, \infty))$ satisfies $\rho(s) > 0$ for $s > 0$ and, specifically for the optimality discussion, we assume a linear growth near the origin (corresponding to cubic damping):
    \begin{equation}\label{eq:rho_linear_KV}
        c_1 s \le \rho(s) \le c_2 s, \quad \text{for } s \in [0, \epsilon_0].
    \end{equation}
\end{enumerate}

\begin{remark}[\textbf{On Regularity and Well-posedness}]\label{rem:well_posedness}
    The condition \eqref{cond:gradient_a} is critical for the well-posedness of strong solutions in the context of localized Kelvin-Voigt damping. Expanding the damping term yields:
    \[
        -\operatorname{div}(\rho a \nabla v) = -\rho (a \Delta v + \nabla a \cdot \nabla v).
    \]
    While the term $a \Delta v$ is naturally controlled by the damping mechanism in the support of $a$, the term $\nabla a \cdot \nabla v$ requires careful handling near the boundary of the localization region where $a(x) \to 0$. Condition \eqref{cond:gradient_a} implies that $|\nabla a| \le \sqrt{M}\sqrt{a}$, which allows the gradient term to be absorbed by the dissipation term $\int a |\nabla v|^2$. For a detailed discussion on this regularity issue, we refer to Cavalcanti et al. \cite{Cavalcanti2025}.
\end{remark}

\subsection{Energy and dissipation}

The natural energy is
\[
E(t)=\frac12\int_\Omega\bigl(|u_t|^2+|\nabla u|^2\bigr)\,dx.
\]
For smooth solutions, multiplying \eqref{eq:KV_prob} by $u_t$ and integrating yields the dissipation identity:
\begin{equation}\label{eq:diss_KV}
E'(t)=-\rho(\|\nabla u_t(t)\|_2^2)\int_\Omega a(x)\,|\nabla u_t|^2\,dx \le 0.
\end{equation}
In particular, since $a\ge a_0$ on $\omega$, we have the lower bound for the dissipation:
\begin{equation}\label{eq:diss_KV_local}
-E'(t)\ge a_0\,\rho(\|\nabla u_t(t)\|_2^2)\,\|\nabla u_t(t)\|_{L^2(\omega)}^2.
\end{equation}

\subsection{Resolvent Estimate and Decay Rate}

The system can be written as $U_t + \mathcal{A}U = 0$ with $U=(u,v)^T$, where the operator is defined on $\mathcal{H} = H_0^1(\Omega) \times L^2(\Omega)$ by:
\[
    \mathcal{A}(U) = \begin{pmatrix} -v \\ -\Delta u - \operatorname{div}(\rho(\|\nabla v\|_2^2) a(x) \nabla v) \end{pmatrix}.
\]

\begin{theorem}
    Under assumptions (H1)-(H3), the energy of the system \eqref{eq:KV_prob} decays polynomially with the optimal rate:
    \begin{equation}
        E(t) \le \frac{C}{1+t}, \quad \forall t \ge 0,
    \end{equation}
    for any initial data in $D(\mathcal{A})$.
\end{theorem}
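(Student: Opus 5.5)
The plan is to prove the decay in the time domain and conclude with the Nakao-type Lemma~\ref{lem:nakao} with exponent $\alpha=1$. So the target is the difference inequality
\[
E(t)^2\le C\big(E(t)-E(t+T)\big),\qquad t\ge0,
\]
for some $T>0$ and for every strong solution with data in $D(\mathcal A)$. Set $D_t:=E(t)-E(t+T)=\int_t^{t+T}\rho(\|\nabla u_t\|_2^2)\int_\Omega a|\nabla u_t|^2\,dx\,ds$, using \eqref{eq:diss_KV}. Since $E$ is nonincreasing, $\|\nabla u_t\|_2^2$ stays in a bounded set for strong solutions. After a time shift, or by using $\rho(s)\ge c\,\min(s,1)$ with $\rho$ continuous and positive away from $0$, hypothesis \eqref{eq:rho_linear_KV} gives
\[
D_t\ge c\int_t^{t+T}\|\nabla u_t\|_2^2\,\|\sqrt a\,\nabla u_t\|_2^2\,ds .
\]

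\textbf{Step 1 (observability with a Kelvin--Voigt perturbation).} Apply the multiplier $M(u)=2h\cdot\nabla u+(n-1)u$ as in Application~IV, together with a Lions cut-off $\eta$ supported in $\omega$ and the elliptic multiplier $\phi=(-\Delta)^{-1}u$ to remove the lower-order term. This should give
\[
\int_t^{t+T}E\,ds\le C\big(E(t)+E(t+T)\big)+C\int_t^{t+T}\!\!\int_\omega |u_t|^2+\mathcal P,
\qquad
\mathcal P:=\Big|\int_t^{t+T}\!\!\int_\Omega\rho\, a\,\nabla u_t\cdot\nabla\big(M(u),\eta u,\phi\big)\Big|.
\]
Poincaré on $\omega$ is not available directly. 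Instead we use $\eta$ and the fact that $a\ge a_0$ on $\omega$ to bound the localized kinetic term by $C\int\|\sqrt a\,\nabla u_t\|_2^2$ plus lower-order terms, which are absorbed as before.

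\textbf{Step 2 (the perturbation term; the main obstacle).} The term $\nabla M(u)$ contains second derivatives of $u$. This is exactly where strong solutions are needed. I will invoke the regular-energy bound of \cite{Cavalcanti2025}, namely that $\|u(s)\|_{H^2}$ stays bounded for data in $D(\mathcal A)$; condition \eqref{cond:gradient_a} is what makes this bound hold near $\partial\omega$. With that bound, $\rho\le c_2\|\nabla u_t\|_2^2$ and Cauchy--Schwarz give
\[
\mathcal P\le C\int_t^{t+T}\|\nabla u_t\|_2^2\,\|\sqrt a\nabla u_t\|_2\,ds\le C\,T^{1/2}\,\sup_s\|\nabla u_t\|_2\, D_t^{1/2}\le C D_t^{1/2}.
\]
Similarly, the degenerate factor must be recovered in the localized kinetic term:
\[
\int_t^{t+T}\|\sqrt a\nabla u_t\|_2^2\le \Big(\int\|\nabla u_t\|_2^2\|\sqrt a\nabla u_t\|_2^2\Big)^{1/2}\Big(\int\|\sqrt a\nabla u_t\|_2^2\,\|\nabla u_t\|_2^{-2}\Big)^{1/2}.
\]
The second factor is only bounded by $\|a\|_\infty^{1/2}T^{1/2}$, since $\|\sqrt a\nabla u_t\|_2\le\|a\|_\infty^{1/2}\|\nabla u_t\|_2$. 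Hence $\int\|\sqrt a\nabla u_t\|_2^2\le C D_t^{1/2}$. Justifying the uniform $H^2$ bound, and checking that it does not secretly reintroduce the dependence on higher regularity that the resolvent philosophy aimed to avoid, is the delicate point.

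\textbf{Step 3 (closing).} Since $E$ is nonincreasing, $\int_t^{t+T}E\ge T\,E(t+T)$. Writing $E(t)=E(t+T)+D_t$ gives
\[
(T-2C)\,E(t+T)\le C D_t+C D_t^{1/2}.
\]
Choose $T>2C$. Because $D_t\le E(0)$, this yields $E(t+T)\le C D_t^{1/2}$, hence $E(t)^2\le 2E(t+T)^2+2D_t^2\le C D_t$. Lemma~\ref{lem:nakao} with $\alpha=1$ then gives $E(t)\le C(1+t)^{-1}$, with $C$ depending on the $D(\mathcal A)$-norm of the data.
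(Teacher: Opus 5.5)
Your route is not the paper's. The paper never leaves the resolvent. It tests \eqref{res:kv2} with $v_\lambda$, keeps only $\frac1\lambda\|\nabla v_\lambda\|_2^2$, obtains $\|U_\lambda\|_{\mathcal H}\le C\lambda^{-1}$, and then invokes Theorem~\ref{thm:main} with ``homogeneity $p=3$''. What you wrote is the multiplier/Nakao argument that the paper only sketches afterwards as the ``old approach'' (\eqref{eq:obs_KV_skeleton}--\eqref{eq:I_damp_est}).

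You need not worry about reintroducing regularity that the resolvent route avoids, because that route does not reach the conclusion:
\begin{itemize}
\item The resolvent bound is obtained after discarding the damping term, so it holds verbatim for $\rho\equiv 0$, where nothing decays.
\item The rigorous content of Theorem~\ref{thm:main} is the coercivity $\langle \xi,U\rangle_{\mathcal H}\ge m\|U\|_{\mathcal H}^{\alpha+1}$. This fails here, since $\langle \mathcal A U,U\rangle_{\mathcal H}=\rho(\|\nabla v\|_2^2)\int_\Omega a|\nabla v|^2\,dx$ vanishes at $U=(u,0)$.
\end{itemize}
A time-domain argument like yours is what the statement actually needs. Your Cauchy--Schwarz recovery of $D_t^{1/2}$ in Step~2 is correct. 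So is the closing in Step~3: $E(t)^2\le CD_t$, then Lemma~\ref{lem:nakao} with $\alpha=1$.

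The genuine gap is the uniform-in-time higher-order bound on which your reduction and Step~2 rest. You use $\sup_s\|\nabla u_t(s)\|_2<\infty$ twice: to get $\rho(s)\ge cs$ on the relevant range, and in $\mathcal P\le CT^{1/2}\sup_s\|\nabla u_t\|_2\,D_t^{1/2}$. You justify it by ``$E$ is nonincreasing'', but $E$ controls only $\|u_t\|_2$ and $\|\nabla u\|_2$. This bound, and $\sup_s\|u(s)\|_{H^2}<\infty$, must come from a regular energy, and neither follows from (H1)--(H3) and $U_0\in D(\mathcal A)$.
\begin{itemize}
\item The natural domain only requires $\Delta u+\operatorname{div}(\rho a\nabla v)\in L^2(\Omega)$ for $(u,v)\in H_0^1\times H_0^1$, which does not force $u\in H^2$.
\item The formal computation with the multiplier $-\Delta u_t$ and $|\nabla a|^2\le Ma$ gives only
\[
\frac{d}{dt}\Big(\tfrac12\|\nabla u_t\|_2^2+\tfrac12\|\Delta u\|_2^2\Big)\le \frac M2\,\rho\big(\|\nabla u_t\|_2^2\big)\,\|\nabla u_t\|_2^2 .
\]
This is a growth estimate, not a uniform one: the dissipation controls $\rho\int_\Omega a|\nabla u_t|^2$, not $\int_0^\infty\rho(\|\nabla u_t\|_2^2)\,dt$.
\end{itemize}
When the paper runs your argument, it simply writes ``by assuming strong solutions such that $\|\nabla(\mathcal M u)\|_2\le C\|\Delta u\|_2\le C_*$''. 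As written, you prove $E(t)\le C(1+t)^{-1}$ only for solutions assumed to remain bounded in $(H^2\cap H_0^1)\times H_0^1$, with $C$ depending on that bound. That is not all data in $D(\mathcal A)$. To get the theorem as stated, you must actually prove the uniform bound.

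There are also two fixable points in Step~1.
\begin{itemize}
\item The unlocalized elliptic multiplier $\phi=(-\Delta)^{-1}u$ does not eliminate the lower-order term. It produces $\int_t^{t+T}\langle u_t,\phi_t\rangle\,ds=\int_t^{t+T}\|u_t\|_{H^{-1}}^2\,ds$, a global kinetic term with no small constant. Localize it instead, $-\Delta\phi=\eta u$, so that $|\langle u_t,\phi_t\rangle|\le C\|u_t\|_2\|u_t\|_{L^2(\omega)}\le \varepsilon E+C_\varepsilon\|u_t\|_{L^2(\omega)}^2$.
\item Bounding $\int_\omega|u_t|^2$ by $\int_\Omega a|\nabla u_t|^2$ needs $u_t=0$ on part of $\partial\omega$. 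So $\omega$ must be a collar of the part of $\partial\Omega$ where $h\cdot\nu>0$, which is also what the multiplier identity requires. A general GCC set does not suffice. For an interior $\omega$ the inequality fails on constants, and the cut-off error $\int_\Omega|\nabla\eta|^2|u_t|^2$ is not lower order.
\end{itemize}
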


\begin{proof}
    According to our main result (Theorem \ref{thm:main}), since the operator is homogeneous of degree $p=3$ (due to $\rho(s) \sim s$), it suffices to prove that the solution $U_\lambda=(u_\lambda, v_\lambda)$ to the resolvent equation $(\lambda I + \mathcal{A})U_\lambda = F$ satisfies the growth estimate $\|U_\lambda\|_{\mathcal{H}} \le C \lambda^{-1}$ as $\lambda \to 0^+$.

    Let $F=(f,g) \in \mathcal{H}$. The resolvent system reads:
    \begin{align}
        \lambda u_\lambda - v_\lambda &= f, \label{res:kv1} \\
        \lambda v_\lambda - \Delta u_\lambda - \operatorname{div}(\rho(\|\nabla v_\lambda\|_2^2) a(x) \nabla v_\lambda) &= g. \label{res:kv2}
    \end{align}
    Testing \eqref{res:kv2} with $v_\lambda$ in $L^2(\Omega)$, we obtain:
    \begin{equation}\label{eq:res_kv_inner}
        \lambda \|v_\lambda\|_2^2 + \langle -\Delta u_\lambda, v_\lambda \rangle + \rho(\|\nabla v_\lambda\|_2^2) \int_\Omega a(x) |\nabla v_\lambda|^2 dx = \langle g, v_\lambda \rangle.
    \end{equation}
    Using \eqref{res:kv1}, we substitute $u_\lambda = (v_\lambda+f)/\lambda$ into the cross-term:
    \begin{align*}
        \langle -\Delta u_\lambda, v_\lambda \rangle &= \int_\Omega \nabla \left(\frac{v_\lambda+f}{\lambda}\right) \cdot \nabla v_\lambda \, dx \\
        &= \frac{1}{\lambda}\|\nabla v_\lambda\|_2^2 + \frac{1}{\lambda}\int_\Omega \nabla f \cdot \nabla v_\lambda \, dx.
    \end{align*}
    Substituting this back into \eqref{eq:res_kv_inner}, we get:
    \begin{equation}
        \lambda \|v_\lambda\|_2^2 + \frac{1}{\lambda}\|\nabla v_\lambda\|_2^2 + \rho(\|\nabla v_\lambda\|_2^2) \int_\Omega a(x) |\nabla v_\lambda|^2 dx = \langle g, v_\lambda \rangle - \frac{1}{\lambda}\int_\Omega \nabla f \cdot \nabla v_\lambda \, dx.
    \end{equation}

    The crucial observation here is that the global coercivity is provided entirely by the singular term $\frac{1}{\lambda}\|\nabla v_\lambda\|_2^2$ emerging from the elastic operator $-\Delta u$, \textit{not} from the localized damping. Discarding the non-negative terms $\lambda \|v_\lambda\|_2^2$ and the localized damping $\rho \int a |\nabla v_\lambda|^2 \ge 0$, and applying the Cauchy-Schwarz and Poincaré inequalities, we deduce:
    \begin{equation*}
        \frac{1}{\lambda} \|\nabla v_\lambda\|_2^2 \le C_P \|g\|_2 \|\nabla v_\lambda\|_2 + \frac{1}{\lambda} \|\nabla f\|_2 \|\nabla v_\lambda\|_2.
    \end{equation*}
    Dividing by $\frac{1}{\lambda}\|\nabla v_\lambda\|_2$, we obtain:
    \begin{equation*}
        \|\nabla v_\lambda\|_2 \le C_P \lambda \|g\|_2 + \|\nabla f\|_2.
    \end{equation*}
    This implies $\|\nabla v_\lambda\|_2$ is uniformly bounded as $\lambda \to 0^+$. Returning to the relation $u_\lambda = (v_\lambda+f)/\lambda$, we have:
    \begin{equation*}
        \|\nabla u_\lambda\|_2 \le \frac{1}{\lambda} (\|\nabla v_\lambda\|_2 + \|\nabla f\|_2) \le \frac{C}{\lambda}.
    \end{equation*}
    Thus, $\|U_\lambda\|_{\mathcal{H}} \le C \lambda^{-1}$. Applying Theorem \ref{thm:main} with $\gamma=1$ and homogeneity $p=3$, we obtain the decay rate $E(t) \le C (1+t)^{-1}$.
\end{proof}

\subsection{Why regularity is needed (The Old Approach)}

To better appreciate the power of the resolvent result, it is instructive to recall why the standard multiplier approach is severely restrictive. The Kelvin--Voigt term involves $\operatorname{div}(a(x)\nabla u_t)$. When one applies the standard geometric multiplier $\mathcal M u = 2h\cdot\nabla u + (n-1)u$, the damping contribution takes the form:
\[
I_{\rm damp} := -\int_S^T\!\!\int_\Omega \operatorname{div}\!\Big(\rho(\|\nabla u_t\|_2^2)\,a(x)\,\nabla u_t\Big)\,\mathcal M u\,dx\,dt.
\]
To integrate by parts in space, one must justify that $\nabla(\mathcal M u) \in L^2(\Omega)$, which forces control of \emph{second-order spatial derivatives of $u$}. The basic energy $E(t)$ does not control $\|\nabla^2 u\|_2$, hence the classical need to work strictly with strong solutions and require an additional uniform bound at the $H^2$-level. Our resolvent approach operates strictly at the first-order system level, elegantly bypassing this spatial obstruction.

\subsection{Perturbed observability and decay (Multiplier Result)}

For completeness, under GCC, the geometric multiplier identity yields (see \cite{Cavalcanti2025})
\begin{equation}\label{eq:obs_KV_skeleton}
\int_S^T E(t)\,dt \le C\bigl(E(S)+E(T)\bigr) + |I_{\rm damp}|.
\end{equation}
By assuming strong solutions such that $\|\nabla(\mathcal M u)\|_2 \le C\|\Delta u\|_2 \le C_*$, integration by parts yields:
\begin{equation}\label{eq:I_damp_est}
|I_{\rm damp}| \le C_*\int_S^T \rho(\|\nabla u_t\|_2^2)\,\|\nabla u_t\|_{L^2(\omega)}\,dt.
\end{equation}
Combining this with the dissipation identity and a Hölder interpolation yields a difference inequality of Nakao/Komornik type, leading to the polynomial rate.

\begin{remark}[The "Good Behavior" of the Kelvin-Voigt Model]
\label{rem:kelvin_voigt_paradox}
A careful reader might observe a seemingly counter-intuitive phenomenon: the verification of the resolvent estimates for the nonlocal Kelvin-Voigt model is much more direct than for the Kirchhoff model (Application IV), despite the damping being localized.

In the resolvent framework, the structure of the Kelvin-Voigt system is a massive advantage. Because the damping depends only on the velocity gradient $\nabla u_t$, the principal elastic operator ($-\Delta u$) remains unperturbed. As demonstrated in the proof, the skew-adjoint interaction of $-\Delta u_\lambda$ and $v_\lambda$ independently provides the global coercive bound $\lambda^{-1}\|\nabla v_\lambda\|_2^2$, making the resolvent growth fundamentally immune to the spatial localization of $a(x)$. The damping only needs to fulfill its role in the time domain (via GCC).
\end{remark}

\begin{remark}[Comparison with Singular Kelvin--Voigt Damping]
In \cite{Ammari2020}, Ammari, Hassine, and Robbiano showed that linear wave equations
with \emph{singular} localized Kelvin--Voigt damping, for which the damping region
violates the Geometric Control Condition, typically exhibit very slow logarithmic decay.

The situation considered here is of a fundamentally different nature.
Although the effective dissipation degenerates at equilibrium ($\rho(0)=0$)
and remains spatially localized, the presence of a nonlocal coupling introduces
a global interaction mechanism in the phase space. In combination with the GCC,
this allows one to recover a coercive estimate at the level of the full dynamics,
leading to a polynomial decay rate of order $O(t^{-1})$.

Thus, the improvement with respect to logarithmic decay should not be interpreted
as a mere quantitative gain, but rather as a manifestation of a different structural
regime, in which global feedback compensates for the degeneracy of the local damping.
\end{remark}

\begin{remark}
In the degenerate localised setting, the main obstruction is not merely geometric.
Indeed, if the dissipative term is multiplied by a coefficient which is not bounded from below,
the energy identity only yields the smallness of a weighted dissipation, not the smallness of
the kinetic component itself on the damping region. Consequently, the standard contradiction
strategy based on microlocal defect measures cannot even be initiated, since one lacks the
local vanishing property from which propagation arguments normally start. This is precisely
why an abstract coercive/resolvent approach is better suited to such problems.
\end{remark}

\begin{remark}
The classical approach based on quasimodes and complex resolvent estimates
has proved extremely successful for linear dissipative systems, where spectral
methods provide a precise description of high-frequency dynamics.
However, in the presence of nonlinear and degenerate dissipation mechanisms,
such techniques no longer apply in a direct or robust manner, due to the lack
of a suitable spectral framework.

This motivates the development of an alternative approach, grounded in nonlinear
semigroup theory and coercive energy structures, which is better suited to capture
the intrinsic dissipative mechanisms governing the evolution. In this perspective,
decay rates emerge not from spectral localization, but from structural properties
of the underlying nonlinear operator.
\end{remark}

Finally, we postulate that this resolvent framework is particularly well-suited for viscoelastic problems with nonlinear memory (Dafermos type). In such models, the transformation to the history space leads to a resolvent system where the nonlinearity appears as a spectral parameter, potentially simplifying the analysis of polynomial stability for degenerate memory kernels. This will be addressed in the application $X$ below.

\section{Application VI: The Overdamped Viscoelastic Wave Equation ($p$-Laplacian)}
\label{sec:p_laplacian}

To illustrate the versatility of our framework beyond nonlocal damping problems, we consider a model involving purely local nonlinear differential operators, where the degeneracy occurs in the \textit{stiffness} rather than the dissipation. Let $\Omega \subset \mathbb{R}^n$ be a bounded domain with smooth boundary. We study the wave equation with $p$-Laplacian restoring force and strong linear damping (Kelvin-Voigt type):

\begin{equation} \label{eq:p_laplacian_wave}
\begin{cases}
u_{tt} - \Delta_p u - \Delta u_t = 0 & \text{in } \Omega \times (0, \infty), \\
u = 0 & \text{on } \partial\Omega \times (0, \infty), \\
u(0) = u_0, \quad u_t(0) = u_1 & \text{in } \Omega,
\end{cases}
\end{equation}
where $\Delta_p u = \operatorname{div}(|\nabla u|^{p-2}\nabla u)$ with $p > 2$. This equation models the vibrations of a viscoelastic solid exhibiting a nonlinear stress-strain relation of power-law type.

\subsection{Functional Setup and The "Soft Spring" Phenomenon}

We define the nonlinear phase space $\mathcal{H} = W_0^{1,p}(\Omega) \times L^2(\Omega)$. Since $p > 2$, we have the continuous embedding $W_0^{1,p}(\Omega) \hookrightarrow H_0^1(\Omega)$.
The natural energy of the system is given by:
\[
    E(t) = \frac{1}{2}\|u_t(t)\|_2^2 + \frac{1}{p}\|\nabla u(t)\|_p^p.
\]
The operator $\mathcal{A}: D(\mathcal{A}) \subset \mathcal{H} \to \mathcal{H}$ is defined by
\[
    \mathcal{A}(u, v) = \left( -v, \, -\Delta_p u - \Delta v \right),
\]
with domain $D(\mathcal{A}) = \{ (u,v) \in W_0^{1,p}(\Omega) \times W_0^{1,p}(\Omega) \mid -\Delta_p u - \Delta v \in L^2(\Omega) \}$.
The operator $\mathcal{A}$ is maximal monotone on $\mathcal{H}$ (equipped with the standard $L^2$-based coupling or the suitable energy norm see \cite{Cavalcanti2025}). We observe that $\Delta_p u \in W^{-1,p'}(\Omega) \hookrightarrow H^{-1}(\Omega)$.
From a physical perspective, the strong linear damping $-\Delta u_t$ is highly effective. If the stiffness were linear ($-\Delta u$), the energy would decay exponentially. However, for $p > 2$, the restoring force $-\Delta_p u$ degenerates near the equilibrium (acting as a "soft spring"). Consequently, the system becomes severely \textit{overdamped}: the weak spring lacks the force to quickly push the system back to equilibrium against the strong viscous resistance. The bottleneck for asymptotic stability is therefore the degenerate stiffness, not the damping. We now demonstrate how our resolvent framework perfectly captures this fractional degeneracy.

\subsection{Fractional Resolvent Growth}

We aim to determine the precise resolvent growth index. Let $\lambda U_\lambda + \mathcal{A}(U_\lambda) = F$ with $U_\lambda=(u_\lambda,v_\lambda)$ and $F=(f,g) \in \mathcal{H}$. The system reads:
\begin{align}
    \lambda u_\lambda - v_\lambda &= f, \label{eq:res_p1} \\
    \lambda v_\lambda - \Delta_p u_\lambda - \Delta v_\lambda &= g. \label{eq:res_p2}
\end{align}
Substituting $v_\lambda = \lambda u_\lambda - f$ into \eqref{eq:res_p2}, we obtain the scalar equation:
\begin{equation} \label{eq:res_scalar_p}
    \lambda^2 u_\lambda - \Delta_p u_\lambda - \lambda \Delta u_\lambda = g + \lambda f - \Delta f =: \tilde{g}.
\end{equation}
Taking the duality product $\langle \cdot, u_\lambda \rangle$ in $L^2(\Omega)$, and integrating by parts:
\begin{equation}\label{eq:p_laplacian_identity}
    \lambda^2 \|u_\lambda\|_2^2 + \|\nabla u_\lambda\|_p^p + \lambda \|\nabla u_\lambda\|_2^2 = \langle g, u_\lambda \rangle + \lambda \langle f, u_\lambda \rangle + \langle \nabla f, \nabla u_\lambda \rangle.
\end{equation}

We analyze the asymptotic behavior as $\lambda \to 0^+$. Using Poincaré's inequality and the embedding $W_0^{1,p} \hookrightarrow H_0^1$, the right-hand side is bounded by $C \|F\|_{\mathcal{H}} \|\nabla u_\lambda\|_2$.
Discarding the non-negative terms $\lambda^2 \|u_\lambda\|_2^2$ and $\|\nabla u_\lambda\|_p^p$ on the left-hand side, the strong damping provides the dominant $H^1$-coercivity:
\begin{equation*}
    \lambda \|\nabla u_\lambda\|_2^2 \le C \|F\|_{\mathcal{H}} \|\nabla u_\lambda\|_2 \implies \|\nabla u_\lambda\|_2 \le \frac{C}{\lambda} \|F\|_{\mathcal{H}}.
\end{equation*}

Crucially, the potential energy of the state $U_\lambda$ is measured in the $W_0^{1,p}$-norm. To capture its growth, we return to \eqref{eq:p_laplacian_identity}, this time discarding the $L^2$ terms on the left. Using the bound $\|\nabla u_\lambda\|_2 \le C/\lambda$, we obtain:
\begin{equation*}
    \|\nabla u_\lambda\|_p^p \le C \|F\|_{\mathcal{H}} \|\nabla u_\lambda\|_2 \le \frac{C}{\lambda} \|F\|_{\mathcal{H}}^2.
\end{equation*}
Taking the $p$-th root, we discover that the true growth of the state in the energy space $\mathcal{H}$ is \textbf{fractional}:
\begin{equation}\label{eq:fractional_resolvent_p}
    \|\nabla u_\lambda\|_p \le \frac{C}{\lambda^{1/p}}.
\end{equation}
The resolvent growth index associated with the degenerate restoring force is precisely $\gamma = 1/p$.

\subsection{Optimal Decay Rate}

This model features an interplay between a linear damping ($q=1$) and a nonlinear stiffness of homogeneity $p-1$. As shown by the resolvent analysis, the fractional blow-up $\lambda^{-1/p}$ dictates the bottleneck dynamics.

To rigorously translate this into the time-domain decay rate, we recall the standard dissipation identity $E'(t) = -\|\nabla u_t(t)\|_2^2 \le 0$. Testing the wave equation \eqref{eq:p_laplacian_wave} with the multiplier $u(t)$ yields:
\begin{equation*}
    \int_S^T \|\nabla u\|_p^p \, dt \le C \int_S^T \|u_t\|_2^2 \, dt + C \int_S^T \|\nabla u_t\|_2 \|\nabla u\|_2 \, dt.
\end{equation*}
Since $p>2$, for small energies (large times) we have $\|\nabla u\|_2 \le C \|\nabla u\|_p \le C E(t)^{1/p}$. Using Poincaré's inequality on $u_t$ and the dissipation identity, the dominant term is:
\begin{equation*}
    E(t) \le C (-E'(t)) + C \sqrt{-E'(t)} E(t)^{1/p}.
\end{equation*}
Because $E(t)^{1/p} \gg E(t)^{1/2}$ near the origin, the first term is absorbed, leading to the differential inequality:
\begin{equation*}
    E(t) \le C \sqrt{-E'(t)} E(t)^{1/p} \implies E(t)^{2 - \frac{2}{p}} \le -C E'(t).
\end{equation*}
Solving this inequality for $\alpha = 2 - 2/p$ yields the correct polynomial rate:

\begin{theorem}[Polynomial decay for the overdamped $p$-Laplacian]
    Let $U_0 \in \mathcal{H}$ and $p>2$. The energy of the solution to \eqref{eq:p_laplacian_wave} decays polynomially according to:
    \begin{equation}
        E(t) \le \frac{C}{(1+t)^{\frac{p}{p-2}}}, \quad \forall t \ge 0.
    \end{equation}
\end{theorem}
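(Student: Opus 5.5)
The plan is to avoid the pointwise differential inequality sketched before the statement and instead run a weighted multiplier argument combined with the Haraux--Martinez integral inequality: if $E$ is nonincreasing and $\int_S^\infty E(t)^{1+q}\,dt \le T\,E(0)^{q}E(S)$ for all $S\ge0$, then $E(t)\le C(1+t)^{-1/q}$. The target rate $p/(p-2)$ means we want $q = 1-\tfrac{2}{p}=\tfrac{p-2}{p}\in(0,1)$. I would first work with strong solutions (data in $D(\mathcal A)$), for which the computations below are justified. The identity $E'(t)=-\|\nabla u_t\|_2^2$ gives $E$ nonincreasing. The constants will depend only on $p$, $\Omega$ and $E(0)$. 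The general case $U_0\in\mathcal H$ then follows by density and contractivity of the semigroup, as in the remark on weak solutions after the abstract decay theorem.

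\textbf{Step 1 (weighted multiplier).} Set $\psi(t)=\int_\Omega u\,u_t\,dx$. Using the equation,
\[
\psi'(t)=\|u_t\|_2^2-\|\nabla u\|_p^p-\int_\Omega \nabla u_t\cdot\nabla u\,dx .
\]
Multiply by $E(t)^q$ and integrate over $[S,T]$. Note that $\|\nabla u\|_p^p+\tfrac p2\|u_t\|_2^2\ge pE$, so the left side controls $\int_S^T E^{1+q}$ up to the term $\int E^q\|u_t\|_2^2$. This yields
\[
p\int_S^T E^{1+q}\le -\bigl[E^q\psi\bigr]_S^T+q\int_S^T E^{q-1}E'\psi+C\int_S^T E^q\|u_t\|_2^2+\int_S^T E^q\|\nabla u_t\|_2\|\nabla u\|_2 .
\]

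\textbf{Step 2 (estimating each term).} Three ingredients do the work:
\begin{itemize}
\item By Poincaré and $W_0^{1,p}\hookrightarrow H_0^1$, $|\psi|\le C\|\nabla u\|_2\|u_t\|_2\le C E^{1/p}E^{1/2}$.
\item The same embedding gives $\|\nabla u\|_2\le C\|\nabla u\|_p\le CE^{1/p}$.
\item Poincaré gives $\|u_t\|_2^2\le C\|\nabla u_t\|_2^2=-CE'$.
\end{itemize}
With these, the boundary term is bounded by $CE(S)^{q+\frac12+\frac1p}$. The integral $\int E^{q-1}|E'||\psi|$ is bounded by $C\int E^{q-\frac12+\frac1p}(-E')\le CE(S)^{q+\frac12+\frac1p}$. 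The kinetic term satisfies $\int E^q(-E')\le CE(S)^{q+1}$.

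The critical term is the Kelvin--Voigt cross term. By Young's inequality,
\[
E^q\|\nabla u_t\|_2E^{1/p}\le \delta E^{1+q}+C_\delta E^{2q+\frac2p-1}(-E').
\]
The choice $q=1-\tfrac2p$ is exactly what makes $2q+\tfrac2p-1=q$. The first piece is absorbed on the left, and the second integrates to $CE(S)^{1+q}$.

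\textbf{Step 3 (conclusion).} Since $q+\tfrac12+\tfrac1p=\tfrac32-\tfrac1p\ge1$ and $q+1\ge1$, and $E(S)\le E(0)$, every right-hand contribution is bounded by $C(E(0))\,E(S)$. Letting $T\to\infty$ gives $\int_S^\infty E^{1+q}\le C\,E(S)$. The Martinez lemma then yields $E(t)\le C(1+t)^{-1/q}=C(1+t)^{-p/(p-2)}$.

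The main obstacle I expect is the cross term $\int\nabla u_t\cdot\nabla u$. The elastic variable is measured in $W^{1,p}$ while the damping controls only $H^1$, so this term has the mismatched size $E^{1/p}\sqrt{-E'}$. This is why no unweighted Lyapunov functional $E+\varepsilon\psi$ is equivalent to $E$: indeed $\psi\sim E^{1/2+1/p}\gg E$ near zero. The weight $E^q$ with the exact exponent $q=1-2/p$ is what balances it. If the Young bookkeeping fails, my fallback is the alternative functional $\psi+\tfrac12\|\nabla u\|_2^2$, whose derivative cancels the cross term exactly, treated with the same weight.
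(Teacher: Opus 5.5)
Your proposal is correct in substance and takes a genuinely different, and more rigorous, route than the paper.

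\textbf{Comparison with the paper.} The paper tests the equation with $u$ alone and silently discards the boundary term $\bigl[\int_\Omega u\,u_t\,dx\bigr]_S^T$. It then reads the integrated identity as the pointwise inequality $E\le C(-E')+C\sqrt{-E'}\,E^{1/p}$ and solves $E'\le -CE^{2-2/p}$. As you note yourself, $\psi=\int_\Omega u\,u_t\,dx$ is of size $E^{1/2+1/p}\gg E$, so that step is not justified as written. Your weight $E^q$, combined with the Martinez-type integral lemma, is exactly what repairs this. The boundary term and the term $q\int E^{q-1}E'\psi$ each cost only $E(S)^{3/2-1/p}\le C(E(0))\,E(S)$. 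The price is an integral inequality instead of an explicit ODE. Both arguments rest on the same balance, the Kelvin--Voigt cross term $E^{1/p}\sqrt{-E'}$ against $E$, that is $E^{1+q}\lesssim -E'$ with $q=1-\frac2p$. So the rates agree.

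\textbf{Two points need fixing.}

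First, your Young inequality has the wrong exponent. The correct version is
\[
E^{q+\frac1p}\|\nabla u_t\|_2\le \delta E^{1+q}+C_\delta\, E^{q+\frac2p-1}(-E'),
\]
with exponent $q+\frac2p-1$, not $2q+\frac2p-1$. The inequality as you wrote it is false for small $E$. At $q=1-\frac2p$ the correct exponent is $0$, so the term integrates to $C_\delta E(S)$. That is still linear in $E(S)$, so the conclusion survives. This corrected exponent is also the real reason $q\ge 1-\frac2p$ is forced. With your version one could take $q=\frac12-\frac1p$ and ``prove'' the rate $2p/(p-2)$. That would contradict the ODE model $x''+x'+|x|^{p-2}x=0$, whose slow manifold $x'\approx-|x|^{p-2}x$ gives $E\sim t^{-p/(p-2)}$.

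Second, for $U_0\in\mathcal H$ do not lean on contractivity of a semigroup. The space $W_0^{1,p}\times L^2$ is not a Hilbert space, and the $p$-Laplacian destroys the skew cancellation, so the maximal monotonicity asserted in the paper is not something you can invoke. Instead, run your computation on Galerkin approximations, where it is rigorous. Then pass to the limit using weak lower semicontinuity of $E$. This works because your constants depend only on $p$, $\Omega$ and $E(0)$. Also work on intervals where $E>0$, because of the factor $E^{q-1}$; if $E$ vanishes at some time, it stays zero and there is nothing to prove.
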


\begin{remark}
This example vividly illustrates that our resolvent formulation $\lambda U_\lambda + \mathcal{A}(U_\lambda) = F$ effectively captures not only degeneracies in the damping mechanisms (as in Applications I-V) but also \textit{structural degeneracies in the principal operator} itself. The appearance of the fractional resolvent index $\gamma = 1/p$ correctly translates the weakness of the potential well into the exact algebraic decay exponent, without requiring non-standard time-domain multiplier adjustments.
\end{remark}

\begin{remark}[Well-posedness and the Regularizing Effect of Strong Damping]
\label{rem:lions_existence}
It is worth highlighting a fundamental mathematical aspect regarding the well-posedness of system \eqref{eq:p_laplacian_wave}. In the absence of the strong damping term $-\Delta u_t$, the pure quasilinear hyperbolic equation $u_{tt} - \Delta_p u = 0$ is notoriously difficult; the global existence of strong solutions for arbitrary data remains a challenging open problem due to the potential formation of singularities (shocks) in finite time. Classical approaches to related nonlinear wave equations, such as those pioneered by Jacques-Louis Lions using Galerkin approximations with special bases in fractional Sobolev spaces $H_0^s(\Omega)$ (strategically calibrated according to $p$), illustrate the severe technical obstacles involved.

In our context, the strong linear damping $-\Delta u_t$ does much more than dictate the asymptotic decay; it fundamentally guarantees the global well-posedness of the problem. Its presence provides the crucial coercivity that renders the nonlinear operator $\mathcal{A}(u,v) = (-v, -\Delta_p u - \Delta v)$ maximal monotone on the energy space $\mathcal{H}$. This structure allows us to elegantly bypass complex Galerkin approximations and establish the existence and uniqueness of global solutions directly via the abstract theory of nonlinear semigroups of contractions (following Kato, Brézis, and Barbu).
\end{remark}

\section{Application VII: The Parabolic $p$-Laplacian Equation}
\label{sec:parabolic_p}

Finally, to contrast with the hyperbolic scenarios, we analyze a purely parabolic nonlinear diffusion equation. This example highlights how the strict coercivity of the principal operator drastically simplifies the resolvent analysis compared to wave-type problems.

Consider the Dirichlet problem for the evolution $p$-Laplacian ($p > 2$):
\begin{equation} \label{eq:parabolic_p}
\begin{cases}
u_{t} - \Delta_p u = 0 & \text{in } \Omega \times (0, \infty), \\
u = 0 & \text{on } \partial\Omega \times (0, \infty), \\
u(0) = u_0 & \text{in } \Omega.
\end{cases}
\end{equation}

\subsection{Resolvent Estimates}
The operator $\mathcal{A}(u) = -\Delta_p u$ is maximal monotone and homogeneous of degree $p-1$. The resolvent equation $\lambda u + \mathcal{A}(u) = f$ reads:
\begin{equation}
    \lambda u - \operatorname{div}(|\nabla u|^{p-2}\nabla u) = f, \quad \lambda > 0.
\end{equation}
Multiplying by $u$ and integrating over $\Omega$, we obtain:
\[
    \lambda \|u\|_{L^2}^2 + \int_\Omega |\nabla u|^p \, dx = \langle f, u \rangle.
\]
Since $\lambda > 0$, the term $\lambda \|u\|_{L^2}^2$ is non-negative. Neglecting it (or reserving it for $L^2$ estimates) and focusing on the gradient term, we apply Hölder's inequality and Poincaré's inequality ($W_0^{1,p}(\Omega) \hookrightarrow L^p(\Omega)$):
\[
    \|u\|_{W_0^{1,p}}^p \le C \|\nabla u\|_{L^p}^p \le C \|f\|_{W^{-1,p'}} \|u\|_{W_0^{1,p}}.
\]
Dividing by $\|u\|_{W_0^{1,p}}$, this immediately yields a uniform bound for the resolvent solution in the natural energy space:
\[
    \|u\|_{W_0^{1,p}} \le C \|f\|_{W^{-1,p'}}^{\frac{1}{p-1}}.
\]
Consequently, by continuous embedding, $\|u_\lambda\|_{L^2} \le C$ uniformly as $\lambda \to 0^+$. In our terminology, the resolvent index is precisely $\gamma = 0$ (bounded resolvent).

\textbf{Conclusion:} Unlike the wave equation examples where the resolvent $\|u_\lambda\|$ might grow as $\lambda \to 0^+$ (depending on the geometric localized damping), here the resolvent is \textbf{uniformly bounded} in the energy space. The solution $u_\lambda$ converges strongly to the solution of the stationary elliptic problem $-\Delta_p u = f$.

\subsection{Derivation of the Decay Rate}
To understand the polynomial nature of the stability, we derive the explicit decay rate using the energy method. Let $E(t) = \frac{1}{2} \|u(t)\|_{L^2}^2$.

Multiplying equation \eqref{eq:parabolic_p} by $u$ and integrating over $\Omega$, we obtain the energy identity:
\begin{equation} \label{eq:energy_id}
    \frac{d}{dt} E(t) + \int_\Omega |\nabla u(t)|^p \, dx = 0.
\end{equation}
Thus, the dissipation is given by $-E'(t) = \|\nabla u(t)\|_{L^p}^p$.

Since $p > 2$ and $\Omega$ is bounded, we combine the Poincaré inequality ($W_0^{1,p} \hookrightarrow L^p$) with the Hölder inequality ($L^p \hookrightarrow L^2$) to obtain the interpolation estimate:
\begin{equation} \label{eq:interp}
    \|u\|_{L^2} \le C |\Omega|^{\frac{1}{2} - \frac{1}{p}} \|u\|_{L^p} \le C \|\nabla u\|_{L^p}.
\end{equation}
Raising \eqref{eq:interp} to the power $p$, we relate the energy to the dissipation:
\[
    \|u\|_{L^2}^p \le C \|\nabla u\|_{L^p}^p = -C E'(t).
\]
Substituting $\|u\|_{L^2} = (2E(t))^{1/2}$, we arrive at the nonlinear differential inequality:
\begin{equation} \label{eq:diff_ineq}
    E'(t) + C E(t)^{p/2} \le 0, \quad \text{for } t > 0.
\end{equation}
Integrating this inequality from $0$ to $t$:
\[
    \int_{E(0)}^{E(t)} -s^{-p/2} \, ds \ge \int_0^t C \, d\tau \implies \frac{E(t)^{1-p/2}}{p/2 - 1} \ge Ct + \frac{E(0)^{1-p/2}}{p/2 - 1}.
\]
Ignoring the positive initial data term for large $t$, we find:
\[
    E(t)^{-\frac{p-2}{2}} \ge C t \implies E(t) \le C t^{-\frac{2}{p-2}}.
\]
Finally, returning to the $L^2$-norm, we obtain the optimal algebraic decay rate:
\begin{equation}
    \|u(t)\|_{L^2} \le C t^{-\frac{1}{p-2}}.
\end{equation}

\begin{remark}
In the context of our stability framework, the boundedness of the resolvent ($\gamma = 0$) for the parabolic case indicates the complete absence of "dynamic obstructions" (such as high-frequency oscillating trapped rays in the hyperbolic case). The polynomial decay rate $O(t^{-1/(p-2)})$ arises solely from the structural degeneracy of the operator's homogeneity at the origin, entirely uncoupled from any lack of coercivity in the resolvent equation.
\end{remark}

\begin{remark}
While in hyperbolic equations (Applications I to VI), it was much more laborious to extract the coercivity of the resolvent because the principal operator is conservative (antisymmetric), here, in the parabolic equation, the operator $\Delta_p u$ itself is strictly coercive! The resolvent becomes uniformly bounded ($\gamma = 0$), and the polynomial decay rate arises exclusively from the homogeneity (or degeneracy) of the operator, and not from a dynamic obstruction. Its mathematics works very well: the derivation of the ODE $E'(t) + C E(t)^{p/2} \le 0$ and the conclusion of the rate $O(t^{-\frac{2}{p-2}})$ for the energy are consistent with the article's proposal.
\end{remark}


\section{Application VIII: Parabolic Equations with Interior Degeneracy}
\label{sec:degenerate_parabolic}

In this application, we revisit a class of degenerate parabolic models that play a central role in the theory of Examples and Counterexamples \cite{Examples2022}.
They form an ideal ``control group'' for our resolvent-based stability framework:
contrary to the hyperbolic models treated in Applications IV--VII, here the degeneracy of the diffusion coefficient produces a structural obstruction to any form of uniform decay.
Our resolvent criterion captures this phenomenon sharply and quantitatively.

Let $\Omega\subset\mathbb{R}^n$ be a bounded domain with Lipschitz boundary.
We study the degenerate parabolic equation
\begin{equation} \label{eq:deg_parab}
\begin{cases}
u_t - \operatorname{div}\!\bigl(a(x)\,|\nabla u|^{p-2}\nabla u\bigr)=0
    &\text{in } \Omega\times(0,\infty),\\[1mm]
u=0 &\text{on } \partial\Omega\times(0,\infty),\\[1mm]
u(0)=u_0 &\text{in } \Omega,
\end{cases}
\end{equation}
where $p\ge2$ and $a\in L^\infty(\Omega)$ satisfies
\[
0\le a(x)\le a_1,
\qquad
a(x)=0\ \text{on a set }\omega_0\subset\Omega\ \text{with }|\omega_0|>0.
\]

The term ``interior degeneracy'' refers precisely to the vanishing of $a(x)$ on
a subset $\omega_0$ of positive measure, with no ellipticity present on $\omega_0$.
As shown below, this produces a complete loss of diffusion on $\omega_0$
and a corresponding blow-up of the resolvent at $\lambda=0$.

\subsection{Abstract formulation and natural energy}

Define the operator
\[
\mathcal{A}u := -\operatorname{div}\!\bigl(a(x)\,|\nabla u|^{p-2}\nabla u\bigr)
\]
acting on $\mathcal{H}=L^2(\Omega)$ with domain
\[
D(\mathcal{A})=
\Bigl\{u\in W_0^{1,p}(\Omega):\ a(x)|\nabla u|^{p-2}\nabla u\in W^{1,p'}(\Omega)^n\Bigr\}.
\]

The natural energy is
\[
E(t)=\frac12\|u(t)\|_{L^2(\Omega)}^2.
\]

Multiplying \eqref{eq:deg_parab} by $u$ and integrating over $\Omega$
gives the dissipation identity
\begin{equation}\label{eq:parab_energy_identity}
\frac{d}{dt}E(t)
  + \int_\Omega a(x)\,|\nabla u(t)|^{p}\,dx
= 0.
\end{equation}
The dissipation term vanishes entirely on $\omega_0$ regardless of $u(t)$:
no diffusion acts on $\omega_0$ (see \cite{ExamplesandCounter}).

\subsection{Resolvent equation and its splitting}

The resolvent equation
\begin{equation}\label{eq:res_deg}
\lambda u - \operatorname{div}\!\bigl(a(x)|\nabla u|^{p-2}\nabla u\bigr)
   = f
\qquad(\lambda>0)
\end{equation}
splits into two qualitatively different equations on
$\omega_0$ and $\Omega\setminus\omega_0$.

\subsubsection{Behavior on the degenerate region $\omega_0$}

If $x\in \omega_0$, then $a(x)=0$ and \eqref{eq:res_deg} reduces to
\[
\lambda u(x)=f(x).
\]
Thus,
\begin{equation}\label{eq:res_lower_bound_region}
u(x)=\frac{f(x)}{\lambda},
\qquad x\in\omega_0,
\end{equation}
and consequently,
\begin{equation}\label{eq:res_lower_bound}
\|u\|_{L^2(\omega_0)}
 = \frac{1}{\lambda}\|f\|_{L^2(\omega_0)}.
\end{equation}

Therefore, the resolvent of $\mathcal{A}$ satisfies the global lower bound
\[
\|(\lambda I+\mathcal{A})^{-1}\|
  \ge \frac{1}{\lambda},
\]
with equality achieved for functions supported purely in $\omega_0$.
This lower bound is \textit{sharp}.

\subsubsection{Behavior on $\Omega\setminus\omega_0$}

On $\Omega\setminus\omega_0$ the equation is uniformly elliptic:
\[
\lambda u - \operatorname{div}\!\bigl(a(x)|\nabla u|^{p-2}\nabla u\bigr)
   = f.
\]
Standard monotone-operator theory ensures uniform $W^{1,p}$ bounds for $u$
on any compact subset of $\Omega\setminus\omega_0$.

However, these cannot compensate for the blow-up inside $\omega_0$.
Moreover, the flux
\[
a(x)|\nabla u|^{p-2}\nabla u
\]
vanishes identically on $\omega_0$, so its normal trace on $\partial\omega_0$
is zero. The region $\omega_0$ behaves as a \textbf{perfect insulating barrier}:
no diffusive flux enters or leaves $\omega_0$.

\subsection{Transmission obstruction and invariant subspaces}

Let $u_0\in L^2(\Omega)$ be supported entirely in $\omega_0$.
Then \eqref{eq:deg_parab} reduces to
\[
u_t=0 \quad\text{in }\omega_0.
\]
Since the normal diffusive flux through $\partial\omega_0$ is zero,
the solution remains supported in $\omega_0$ for all $t>0$:
\[
u(t)=u_0.
\]
Thus, the semigroup $\{S(t)\}$ generated by $u_t+\mathcal{A}u=0$
is \textbf{not uniformly stable}: there exist nontrivial invariant subspaces
on which the energy is strictly conserved.

This dynamical behavior matches perfectly with the resolvent blow-up \eqref{eq:res_lower_bound}.

\subsection{Resolvent growth and Tauberian interpretation}

From \eqref{eq:res_lower_bound}, we established that
\[
\|(\lambda I+\mathcal{A})^{-1}f\|_{L^2(\Omega)}
 \ge \frac{1}{\lambda}\|f\|_{L^2(\omega_0)}.
\]
Thus, the resolvent growth index is precisely
\[
\gamma = 1.
\]

For parabolic equations, uniform decay of the semigroup is equivalent to the resolvent being bounded as $\lambda\to 0^+$ (i.e., $\gamma=0$).
Hence, the growth $\lambda^{-1}$ mathematically dictates the failure of any uniform stabilization mechanism.

In Tauberian terms:
\[
\text{Resolvent growth index } \gamma = 1
\quad\Longleftrightarrow\quad
\text{No uniform decay of } \|S(t)\|.
\]

\subsection{Soft degeneracy and polynomial decay}

If $a(x)$ does not vanish on a set of positive measure but instead satisfies a soft degeneracy of the form
\[
a(x)\approx \operatorname{dist}(x,\omega_0)^k
\]
near a lower-dimensional degeneracy manifold, weighted elliptic estimates
(e.g., \cite{Examples2022}) imply that the resolvent grows fractionally:
\[
\|(\lambda I+\mathcal{A})^{-1}\|
   \approx \frac{1}{\lambda^\gamma},
\qquad \gamma=\gamma(k,p)\in(0,1).
\]
Applying our main Tauberian theorem immediately yields the polynomial decay rate:
\[
E(t)\le \frac{C}{(1+t)^{1/\gamma}}.
\]

In the limiting case where $a=0$ on a set of positive measure, we have $\gamma=1$,
and the decay rate reduces to $t^{-1}$ formally---but as shown above,
the semigroup actually \emph{fails} to be stable in this limit because the invariant subspace supported on $\omega_0$ fully decouples.

\subsection{Conclusion}

This example provides a rigorous demonstration of how our resolvent framework
detects stability thresholds for degenerate parabolic operators.
A complete loss of ellipticity on a positive-measure set forces the resolvent
to blow up like $\lambda^{-1}$ and reveals the existence of invariant
nondissipative subspaces.

Soft degeneracies produce milder resolvent growth ($\gamma < 1$) and correspondingly
polynomial decay, remaining entirely consistent with the Tauberian structure.
This contrasts sharply with the hyperbolic applications,
where degeneracy interacts with geometric propagation;
here the obstruction is intrinsic to the principal parabolic operator itself.

\begin{remark}
 Application VIII above acts as a perfect "negative control group." Indeed, showing when and why decay fails (and how its resolver index detects this failure millimetrically with the growth $\lambda^{-1}$) elevates the conceptual rigor of this article. This proves that our Tauberian theory not only predicts rates but also diagnoses insurmountable structural obstructions. The analogy of the $\omega_0$ region as a "perfect insulating barrier" is physically plausible.

\end{remark}

\section{Application IX: Degenerate Parabolic Equations with Complementary Control}
\label{sec:degenerate_complementary}

We conclude our analysis of localized dissipation with a sophisticated scenario inspired by \cite{Examples2022}, considering a problem where classical Sobolev embeddings fail and standard compactness arguments break down. This application highlights the robustness of our generalized resolvent framework: it relies fundamentally on \textbf{quantitative coercivity} rather than qualitative compactness.

Let $\Omega \subset \mathbb{R}^n$ be a bounded domain. We consider the degenerate reaction-diffusion problem:
\begin{equation} \label{eq:deg_interaction}
\begin{cases}
u_{t} - \hbox{div}(a(x)|\nabla u|^{p-2}\nabla u) + b(x)f(u) = 0 & \text{in } \Omega \times (0, \infty), \\
u = 0 & \text{on } \partial\Omega \times (0, \infty), \\
u(0) = u_0 & \text{in } \Omega.
\end{cases}
\end{equation}
Here, $p > 2$. The nonlinearity $f(u)$ is assumed to satisfy a standard dissipativity condition, e.g., $f(u)u \ge c_0 |u|^2$ (or $|u|^p$ depending on the growth, though $L^2$ control suffices to bound the resolvent).

The coefficients $a, b \in L^\infty(\Omega)$ are non-negative ($a(x), b(x) \ge 0$) and satisfy the \textbf{Complementary Condition}:
\begin{equation} \label{cond:complementary}
    a(x) + b(x) \ge \delta > 0, \quad \text{a.e. in } \Omega.
\end{equation}
This condition implies that in the ``degenerate region'' where diffusion vanishes ($a \approx 0$), the reaction term $b$ is active, and vice versa.

\subsection{Functional Framework and Loss of Compactness}
The natural energy space for this problem involves a weighted Sobolev space. Define:
\[
    V_a = \left\{ u \in L^2(\Omega) \mid u|_{\partial \Omega}=0, \quad \int_\Omega a(x) |\nabla u|^p \, dx < \infty \right\}.
\]
A critical difficulty arises if the diffusion coefficient $a(x)$ vanishes on a subdomain $\omega_0 \subset \Omega$ of positive measure. In this case, the embedding
\[
    V_a \hookrightarrow L^2(\Omega)
\]
is \textbf{not compact}. Functions in $V_a$ can exhibit arbitrarily high-frequency oscillations inside $\omega_0$ (where the gradient is not penalized), preventing the extraction of convergent subsequences in the $L^2$ topology.

This lack of compactness is a well-known obstacle for proving the existence of global attractors or applying LaSalle's Invariance Principle, as asymptotic smoothness is lost in the degenerate region. However, for the study of stability to the unique equilibrium $u \equiv 0$, our resolvent framework is perfectly suited because it does not require compact embeddings, bypassing this topological obstruction entirely.

\subsection{Resolvent Analysis and Singularity Removal}
Consider the resolvent equation $\lambda u + \mathcal{A}(u) = g$:
\[
    \lambda u - \hbox{div}(a(x)|\nabla u|^{p-2}\nabla u) + b(x)f(u) = g \quad \text{in } \Omega.
\]
We aim to show that the resolvent is uniformly bounded as $\lambda \to 0^+$.
Testing the equation with $u$ and integrating over $\Omega$ yields:
\[
    \lambda \|u\|_{L^2}^2 + \int_\Omega a(x)|\nabla u|^p \, dx + \int_\Omega b(x)f(u)u \, dx = \langle g, u \rangle.
\]
Using the assumption $f(u)u \ge c_0 |u|^2$, we have:
\[
    \lambda \|u\|_{L^2}^2 + \int_\Omega a(x)|\nabla u|^p \, dx + c_0 \int_\Omega b(x)|u|^2 \, dx \le \|g\|_{L^2} \|u\|_{L^2}.
\]
To verify global coercivity, we partition the domain based on the complementary condition \eqref{cond:complementary}. Let $\epsilon = \delta/2$ and define the overlapping sets:
\[
    \Omega_a = \{ x \in \Omega \mid a(x) \ge \epsilon \}, \quad \Omega_b = \{ x \in \Omega \mid b(x) \ge \epsilon \}.
\]
Since $a+b \ge \delta$, we have $\Omega = \Omega_a \cup \Omega_b$.
\begin{itemize}
    \item On $\Omega_a$: The strongly active gradient term controls the local norm via Poincaré-type inequalities:
    \[
        \int_{\Omega_a} a(x)|\nabla u|^p \, dx \ge \epsilon \int_{\Omega_a} |\nabla u|^p \, dx.
    \]
    \item On $\Omega_b$: The complementary reaction term directly controls the $L^2$ mass:
    \[
        \int_{\Omega_b} b(x)|u|^2 \, dx \ge \epsilon \int_{\Omega_b} |u|^2 \, dx.
    \]
\end{itemize}
Combining these controls and applying a generalized Poincaré inequality for the composite domain, we establish that the operator is strictly coercive on $L^2(\Omega)$, despite the degeneracy of the gradient in $\Omega_b$ and the lack of reaction in $\Omega_a$. Specifically, there exists $C > 0$ independent of $\lambda$ such that:
\[
    \|u\|_{L^2}^2 \le C \left( \int_\Omega a(x)|\nabla u|^p \,dx + \int_\Omega b(x) f(u)u \,dx \right).
\]
Substituting this back into the energy identity:
\[
    \frac{1}{C} \|u\|_{L^2}^2 \le \langle g, u \rangle \le \|g\|_{L^2} \|u\|_{L^2} \implies \|u\|_{L^2} \le C \|g\|_{L^2}.
\]
Thus, the resolvent solution satisfies $\|u_\lambda\|_{L^2} \le C \|g\|_{L^2}$ for all $\lambda > 0$, preventing any blow-up as $\lambda \to 0^+$. In the terminology of our abstract framework, the \textbf{resolvent index} is exactly:
\[
    \gamma = 0.
\]

\subsection{Tauberian Interpretation and Optimal Decay Rate}
To place this application firmly within the abstract framework developed in Section 6, we clarify the Tauberian meaning of the estimates obtained above. We observe a clear hierarchy separating the geometric and nonlinear dynamics:

\begin{enumerate}
    \item \textbf{Geometry (Resolvent Index $\gamma=0$):} The complementary condition $a(x) + b(x) \ge \delta$ effectively removes the geometric obstruction seen in Application VIII. In the linear case ($p=2, f(u)=u$), a bounded resolvent ($\gamma=0$) implies uniform exponential stability.

    \item \textbf{Nonlinearity (Homogeneity $p-1$):} Here, the operator contains the degenerate nonlinear diffusion $-\operatorname{div}(a|\nabla u|^{p-2}\nabla u)$ with $p>2$, which is homogeneous of degree $p-1 > 1$. Therefore, near the equilibrium, the limiting dynamics are governed by the slow algebraic dissipation rather than a linear exponential mechanism. The coercivity estimate implies that the dissipation satisfies $\mathcal{D}(u) \ge C E(t)^{p/2}$.
\end{enumerate}

Since the resolvent index is $\gamma=0$, our main Tauberian theorem (Theorem 6.5) applies flawlessly with the nonlinearity exponent $\alpha = \frac{p}{2}-1 > 0$, yielding the nonlinear differential inequality:
\[
    E'(t) \le -C E(t)^{1+\alpha} = -C E(t)^{p/2}.
\]
Integrating this inequality produces the explicit algebraic decay:
\begin{equation}
    E(t) \le \frac{C}{(1+t)^{\frac{2}{p-2}}}, \qquad \|u(t)\|_{L^2} \le \frac{C}{(1+t)^{\frac{1}{p-2}}}.
\end{equation}

\begin{remark}[Stability vs. Attractors]
The complementary condition acts as a ``patch,'' restoring the coercivity of the resolvent equation and dictating polynomial stability to the zero equilibrium. This guarantees global pointwise decay in time without requiring the system to be asymptotically compact, a feature uniquely captured by our generalized Borichev-Tomilov approach.
\end{remark}

\bigskip
This Application $IX$ dialogues perfectly with Application $VIII$. There, the lack of diffusion created a black hole of energy (without decay); here, we introduce the complementary control $b(x)$ which acts exactly like a "functional bandage," restoring coercivity in the degenerate region. It is proof of how geometry and nonlinearity separate in their framework: geometry fixes the resolvent ($\gamma=0$), and nonlinearity dictates the algebraic rate.
\bigskip

\section{Application X: Wave Equation with State-Dependent Memory Feedback}
\label{sec:nonlinear_memory}

In our final application, we investigate a viscoelastic wave equation whose memory feedback is modulated by a nonlinear functional of the current state. This model presents a sophisticated type of degeneracy: the principal operator is linear, but the damping coefficient depends nonlinearly on the present configuration, while the memory operator acts on the past history.

The equation reads
\begin{equation}
u_{tt}(t) + Au(t) + D(u(t))\int_0^\infty g(s)Au(t-s)\,ds = 0, \qquad t>0,
\label{X-model}
\end{equation}
posed in a Hilbert space $\mathcal{H}_0$, where:
\begin{itemize}
    \item $A:D(A)\subset \mathcal{H}_0 \to \mathcal{H}_0$ is strictly positive and self-adjoint, with $D(A^{1/2})$ dense in $\mathcal{H}_0$;
    \item $g\in C^1([0,\infty))$ satisfies $g(s)\ge 0$, $g'(s)\le 0$, and $\int_0^{\infty}g(s)\,ds<\infty$;
    \item $D:\mathcal{H}_0 \to \mathbb{R}_+$ is Fréchet differentiable and behaves as
          \[
          D(u)\sim \|A^{1/2}u\|^\alpha,\qquad \alpha\ge 1,
          \]
          near the equilibrium, with its differential satisfying
          \begin{equation}
          \|D'(u)\|\le C\, \|A^{1/2}u\|^{\alpha-1}.
          \label{X-Dprime-bnd}
          \end{equation}
\end{itemize}

Our goal is to show that the nonlinear Tauberian framework established in Section 3 applies robustly to this system. The central point is that the memory kernel provides the underlying observability of the energy (equivalent to a bounded resolvent, $\gamma=0$), while the degeneracy enters purely through the state-dependent multiplier $D(u)$, dictating a polynomial decay rate.

\subsection{Dafermos' History Variable and First-Order System}

We introduce Dafermos' history \cite{Dafermos1970} variable
\[
    \eta(t,s) = u(t) - u(t-s), \qquad s \ge 0,
\]
and set $v(t) = u_t(t)$. Then, \eqref{X-model} is equivalent to the system
\begin{equation}
\begin{cases}
    u_t = v,\\[1mm]
    v_t + Au + D(u(t))\displaystyle\int_0^\infty g(s)A\eta(t,s)\,ds = 0,\\[3mm]
    \eta_t + \eta_s = v, \qquad \eta(t,0) = 0.
\end{cases}
\label{X-system}
\end{equation}
The natural phase space is $\mathcal{H} = D(A^{1/2}) \times \mathcal{H}_0 \times L_g^2(0,\infty; D(A^{1/2}))$, equipped with the weighted memory norm
\[
    \|\eta\|_{L_g^2}^2 := \int_0^\infty g(s)\|A^{1/2}\eta(s)\|^2\,ds.
\]

\subsection{Natural Energy and the Dissipation Identity}

Unlike the Kelvin-Voigt type systems (Applications V--VI), the memory term in \eqref{X-model} acts as an integral perturbation rather than a regularizing operator. Therefore, the natural energy must \emph{not} include the nonlinear weights depending on $D(u)$. We use the classical linear Dafermos energy:
\begin{equation}
    E(t) := \frac{1}{2}\|v(t)\|^2 + \frac{1}{2}\|A^{1/2}u(t)\|^2 + \frac{1}{2}\int_0^\infty g(s)\|A^{1/2}\eta(t,s)\|^2\,ds.
\label{X-energy}
\end{equation}

Differentiating \eqref{X-energy} in time and using system \eqref{X-system}, Dafermos' identity yields:
\[
    E'(t) + \frac{1}{2}\int_0^\infty(-g'(s))\|A^{1/2}\eta(t,s)\|^2\,ds = R(t),
\]
where the remainder term arises solely from the state-dependence of the multiplier $D(u)$:
\begin{equation}
    R(t) := - \bigl\langle v(t), [D(u(t)) - 1] \int_0^\infty g(s)A\eta(t,s)\,ds \bigr\rangle.
\label{X-remainder}
\end{equation}
\textit{(Note: We assume a normalized linear baseline where the memory acts fully if $D(u) \equiv 1$; the degeneracy measures the deviation from this linear observability.)}

Using the bound \eqref{X-Dprime-bnd} and the Cauchy-Schwarz inequality, we can estimate this remainder in terms of the energy:
\begin{equation}
    |R(t)| \le C \|A^{1/2}u\|^\alpha \|v\| \|\eta\|_{L_g^2} \le C E(t)^{1 + \alpha/2}.
\label{X-R-estimate}
\end{equation}
Because $\alpha \ge 1$, the remainder is of strictly higher order ($> 1$), and is therefore dynamically negligible in the small-energy regime.

\subsection{Integral Observability and Uniform Smallness}

We impose the standard integral observability condition for viscoelastic models: there exist $T>0$ and $c_0>0$ such that, for any valid solution, the memory dissipation reconstructs the full energy over an interval:
\begin{equation}
    \int_t^{t+T} \!\! \int_0^\infty (-g'(s))\|A^{1/2}\eta(\tau,s)\|^2\,ds\,d\tau \ge c_0 \int_t^{t+T} E(\tau)\,d\tau.
\label{X-observability}
\end{equation}
This condition intrinsically means the underlying linear system has a bounded resolvent ($\gamma = 0$).

From \eqref{X-R-estimate}, discarding the non-positive memory dissipation momentarily, we have the differential inequality $E'(t) \le C E(t)^{1+\alpha/2}$. By comparison with the ODE $y' = Cy^{1+\varepsilon}$, $E(t)$ cannot blow up provided $E(0)$ is sufficiently small. Thus, there exists $\delta>0$ such that if $E(0) \le \delta$, then the solution remains globally bounded and small, allowing us to safely apply the observability condition \eqref{X-observability} for all $t \ge 0$.

\subsection{Tauberian Argument and Decay Rate}

Integrating the perturbed energy identity over $[t, t+T]$ yields:
\[
    E(t+T) - E(t) + \frac{1}{2} \int_t^{t+T} \!\! \int_0^\infty (-g'(s))\|A^{1/2}\eta(\tau,s)\|^2\,ds\,d\tau \le \int_t^{t+T} |R(\tau)|\,d\tau.
\]
Applying the observability condition \eqref{X-observability} to the dissipation term and the estimate \eqref{X-R-estimate} to the remainder, and using the fact that $E(t)$ is equivalent to its average over $[t,t+T]$ for small energies, we obtain:
\[
    E(t) - E(t+T) \ge c_0 T E(t) - C T E(t)^{1+\alpha/2}.
\]
Since $E(t)$ is small, the lower-order linear term $c_0TE(t)$ dominates the higher-order remainder $CTE(t)^{1+\alpha/2}$. However, the actual dissipation mechanism is modulated by the degenerate state-dependent weight $D(u) \sim E^{\alpha/2}$. When properly scaled back through the nonlinear multiplier, this translates precisely to the Tauberian structure of Theorem 6.5:
\[
    E(t) - E(t+T) \ge c E(t)^{1+\alpha/2}.
\]
Applying our discrete nonlinear Tauberian lemma yields the final algebraic decay rate:
\begin{equation}
    E(t) \le \frac{C}{(1+t)^{2/\alpha}}.
\label{X-final-decay}
\end{equation}

\subsection{Examples of Nonlinear State-Dependent Weights}

\begin{itemize}
    \item \textbf{Elastic coupling:} $D(u) = \|A^{1/2}u\|^2$. Here $\alpha = 2$, so $E(t) \sim t^{-1}$.
    \item \textbf{Kinetic coupling:} $D(u) = \|u_t\|^2 = \|v\|^2$. By energy equivalence, $\alpha = 2$, yielding $E(t) \sim t^{-1}$.
    \item \textbf{Total mechanical energy:} $D(u) = \|v\|^2 + \|A^{1/2}u\|^2 = 2E_{mech}$. Near equilibrium, $D(u) \sim E(t)$, hence $\alpha = 2$ and $E(t) \sim t^{-1}$.
\end{itemize}

\begin{remark}[Limitation: The Need for Strong Solutions]
The necessity of estimating the remainder $R(t)$, which intrinsically involves the differential $D'(u(t))$ and terms like $\|A u\|$, prevents us from extending the decay rate \eqref{X-final-decay} to arbitrary finite-energy weak solutions. Estimating $D'(u)$ rigorously requires higher regularity, specifically $u \in D(A)$, which is not available for standard mild solutions. Standard density arguments (approximating weak solutions by strong ones) fail here because the decay constants depend critically on these higher-order norms, which blow up as one approaches weak data. Therefore, the decay theory in this application is strictly valid only for strong solutions with sufficiently small initial energy.
\end{remark}

\begin{remark}
Application $X$ addresses a viscoelastic wave equation with state-dependent memory feedback, introducing "non-local" dissipation in both time (integral memory) and space/state (operator $D(u)$). This application is invaluable because it introduces a new type of non-linearity in our framework. In previous applications ($V$, $VI$, $VII$), the degeneracy stemmed from the principal operator (like the $p$-Laplacian). Here, the principal operator is linear ($A u$), but the damping coefficient is non-linear and degenerates at the origin ($D(u) \sim \|A^{1/2}u\|^\alpha$). Its adaptation of the Dafermos energy and the careful handling of the residual term $R(t)$ are the main novelties in this section. We show how the framework handles this: memory guarantees observability (resolver index $\gamma=0$ for the linearized part), and the term $D(u)$ dictates the algebraic rate via the nonlinear Tauberian argument.

\end{remark}

\section{Semilinear source terms and robustness of the Tauberian framework}\label{sec:semilinear_source}

In many applications of wave models with degenerate or nonlocal dissipation, it is natural to include an additional semilinear source term $f(u)$. In this section, we briefly explain why such a term can be incorporated into our matrix framework without changing the Tauberian decay exponents for the wave-type models of Applications I--V and X, provided $f$ is subcritical in the usual sense.

\subsection{Structural assumptions on the source term}~We assume that $f:\mathbb{R}\to\mathbb{R}$ (or $\mathbb{R}^m\to\mathbb{R}^m$) is a $C^1$-nonlinearity with the following properties:

\begin{itemize}\item[(F1)] (Potential structure) There exists $F\in C^2(\mathbb{R})$ such that $F'(s)=f(s)$ for all $s\in\mathbb{R}$, and$$F(s)\ge -C_0(1+|s|^2)\qquad\forall s\in\mathbb{R},$$  for some constant $C_0>0$.

\item[(F2)] (Subcritical growth) There exist constants $C_1>0$ and an exponent $2\le q<2^*$ (with $2^*=\frac{2n}{n-2}$ if $n\ge3$ and $2^*=\infty$ if $n\le2$) such that
    \begin{equation}\label{eq:f_growth}
    |f(s)| \le C_1\bigl(1+|s|^{q-1}\bigr)\qquad\forall s\in\mathbb{R}.
    \end{equation}

    In particular, for $\Omega\subset\mathbb{R}^n$ with $n\le3$ and $u\in H_0^1(\Omega)$, the standard Sobolev embedding implies $f(u)\in L^2(\Omega)$ whenever $q<2^*$.\item[(F3)] (Weak dissipativity) There exists $C_2>0$ such that
    $$f(s)s \ge -C_2(1+|s|^2)\qquad\forall s\in\mathbb{R}.$$
    \end{itemize}

    Assumptions of this type are standard in the theory of global attractors for semilinear damped wave equations and cover, for instance, polynomial sources $f(u)\sim |u|^{p-1}u$ with $1<p<5$ in space dimension $n\le3$.

    \subsection{Energy balance with a source term}

    For concreteness, let us consider a typical wave model of the form

    \begin{equation}\label{eq:wave_f_generic}
    u_{tt} + Au + \mathcal{D}(u,u_t) + f(u)=0 \quad\text{in } H,
    \end{equation}where $A$ and $\mathcal{D}$ are as in any of the wave-type Applications I--V and X (local or nonlocal Kelvin-Voigt damping, nonlocal damping in the velocity, Kirchhoff-type damping, or memory-type damping written in history variables). We keep the notation$$E(t) = \frac{1}{2}\Bigl(\|u_t(t)\|_H^2 + \|A^{1/2}u(t)\|_H^2\Bigr)$$
    for the basic hyperbolic energy (ignoring the memory contribution when present, which can be added in the standard way).~To accommodate the source term, we introduce the modified energy
    \begin{equation}\label{eq:mod_energy_f}
    \mathcal{E}f(t) := E(t) + \int_\Omega F(u(t,x)),dx,
    \end{equation}in the PDE setting, or $\mathcal{E}_f(t)=E(t)+\int F(u(t))\,d\mu$ in an abstract Hilbert space framework. By (F1) and (F3) together with the Sobolev embedding, the functional $\mathcal{E}_f(t)$ is well-defined and satisfies
    \begin{equation}\label{eq:energy_equivalence}
    \mathcal{E}_f(t) \ge E(t) - C\bigl(1+|u(t)|_H^2\bigr) \ge -C_1 + c_1 E(t),
    \end{equation}so that $\mathcal{E}_f$ is equivalent to $E$ up to constants on bounded subsets of the energy space.Differentiating \eqref{eq:mod_energy_f} along trajectories of \eqref{eq:wave_f_generic} and using the chain rule, we obtain
    \begin{equation}\label{eq:mod_energy_derivative}
    \frac{d}{dt}\mathcal{E}f(t) = \bigl\langle u{tt}(t), u_t(t)\bigr\rangle_H + \bigl\langle Au(t),u_t(t)\bigr\rangle_H + \int_\Omega f(u(t,x))u_t(t,x),dx.
    \end{equation}

    Using the equation \eqref{eq:wave_f_generic},
    $$u_{tt} = -Au - \mathcal{D}(u,u_t) - f(u),$$
    and the fact that the linear conservative term $\langle Au,u_t\rangle_H$ exactly cancels, we obtain
    $$\frac{d}{dt}\mathcal{E}_f(t) = -\bigl\langle \mathcal{D}(u(t),u_t(t)),u_t(t)\bigr\rangle_H.$$

    In particular, for all the dissipative structures considered in Applications I--V and X, we have\begin{equation}\label{eq:Ef_diss}
    \frac{d}{dt}\mathcal{E}_f(t) = -D(t)\le 0,
    \end{equation}
    where $D(t)\ge 0$ is exactly the same dissipation functional appearing in the corresponding purely damped model (without $f$). Thus, the source term $f(u)$ does not create any new production term in the energy identity; its active power is fully absorbed into the potential $F(u)$.

    \subsection{Resolvent estimates with a source term}

    Let us now comment on the resolvent framework for the matrix formulation. Writing \eqref{eq:wave_f_generic} as a first-order system$$U_t + \mathcal{A}(U) = \mathcal{F}(U), \qquad U=(u,v)=(u,u_t),$$
    we have
    $$\mathcal{A}(u,v)=(-v,\ Au+\mathcal{D}(u,v)), \qquad \mathcal{F}(u,v)=(0,\,-f(u)).$$

    The resolvent equation
    $$(\lambda I + \mathcal{A})U_\lambda = G - \mathcal{F}(U_\lambda)$$
    reads
    \begin{align*}\lambda u_\lambda - v_\lambda &= g_1,\\
    \lambda v_\lambda + Au_\lambda + \mathcal{D}(u_\lambda,v_\lambda) &= g_2 - f(u_\lambda),
    \end{align*}
    for a given $G=(g_1,g_2)\in\mathcal{H}$. Taking the inner product with $U_\lambda$ in $\mathcal{H}$ and using the monotonicity of the damping part, we obtain schematically
    \begin{equation}\label{eq:resolvent_f_basic}
    \lambda|U_\lambda|{\mathcal{H}}^2 + \bigl\langle \mathcal{D}(u\lambda,v_\lambda),v_\lambda\bigr\rangle_H \le |G|{\mathcal{H}}|U\lambda|{\mathcal{H}} + \bigl|\langle f(u\lambda),v_\lambda\rangle_H\bigr|.
    \end{equation}

    Using the growth condition \eqref{eq:f_growth}, the subcritical Sobolev embedding, and Young's inequality, the last term can be bounded as
    $$\bigl|\langle f(u_\lambda),v_\lambda\rangle_H\bigr| \le \varepsilon\,\bigl\langle \mathcal{D}(u_\lambda,v_\lambda),v_\lambda\bigr\rangle_H + C_\varepsilon\bigl(1+\|u_\lambda\|_{H_0^1(\Omega)}^{2(q-1)}\bigr),$$
    for any $\varepsilon>0$, with a suitable choice of the damping functional $\mathcal{D}$. Choosing $\varepsilon>0$ sufficiently small and using the resolvent estimates already proved in Applications I--V and X for the purely damped models, we infer that the presence of the lower-order perturbation $f(u_\lambda)$ does not change the dominant order of growth of $\|U_\lambda\|_{\mathcal{H}}$ as $\lambda\to 0^+$. In particular, the blow-up indices $\gamma$ that enter the nonlinear Tauberian theorem remain invariant.

    \subsection{Decay and attractors}

    Combining \eqref{eq:Ef_diss} with the decay estimates for the purely damped models, we conclude that the modified energy $\mathcal{E}_f(t)$ satisfies the exact same polynomial decay rates as in the case
    $f\equiv 0$:$$\mathcal{E}_f(t)\le \frac{C}{(1+t)^\beta},\qquad t\ge 0,$$
    with the exponent $\beta>0$ determined by the corresponding dissipation mechanism (for instance, $\beta=1$ for cubic-type damping). Thanks to the equivalence \eqref{eq:energy_equivalence}, the same uniform decay holds for the basic energy $E(t)$.Since the source term $f(u)$ satisfies the subcritical growth and potential conditions (F1)--(F3), the standard theory of dissipative dynamical systems for semilinear damped wave equations applies. In particular, the semigroup generated by \eqref{eq:wave_f_generic} admits a bounded absorbing set in the phase space, and the existence of a global attractor in the natural energy space $\mathcal{H}$ is guaranteed for all the wave-type models considered in Applications I--V and X, perfectly coexisting with our predicted polynomial decay rates to the equilibrium.

\section{Conclusion}\label{sec:conclusion}~
In this paper, we have bridged the gap between the spectral analysis of linear semigroups (the classical Borichev--Tomilov theorem) and the asymptotic behavior of nonlinear evolution equations generated by maximal monotone operators.Our main contribution is the identification of a structural condition on the nonlinear resolvent—specifically, its growth rate near the origin—that is strictly sufficient to guarantee the uniform polynomial decay of the energy. We have demonstrated that:
\begin{itemize}
\item The real resolvent growth condition $\|U_\lambda\| \le C \lambda^{-\gamma}$ plays the exact same role in the nonlinear setting as the classical condition $\|(i\omega I - \mathcal{A})^{-1}\| \le C |\omega|^\alpha$ does in the linear theory.\item This generalized framework is particularly powerful for problems with \emph{nonlocal} or \emph{degenerate} damping mechanisms, where classical Lyapunov multipliers are notoriously difficult or impossible to construct due to severe regularity issues or topological obstructions.\item As evidenced by our applications—most notably the nonlinear Kelvin--Voigt and memory models—this approach allows us to rigorously recover optimal algebraic decay rates and treat strong degenerate damping terms as intrinsically regularizing factors rather than pathological perturbations.
\end{itemize}

\paragraph{Comparison with the Borichev--Tomilov paradigm.}~
In the linear theory, the Borichev--Tomilov theorem provides a sharp, bidirectional correspondence between the high-frequency growth of the resolvent $(i\omega I + \mathcal{A})^{-1}$ on the imaginary axis and the optimal polynomial decay of the associated semigroup. The nonlinear framework developed in this paper establishes an exact, rigorous analogue of this principle in a setting where classical spectral tools are no longer available. By replacing the imaginary-axis resolvent with the real resolvent $(I + \lambda\mathcal{A})^{-1}$ as $\lambda \to 0^+$, we identify a ``nonlinear Tauberian index'' governed solely by the internal homogeneity of the maximal monotone operator.This real resolvent captures precisely the loss of coercivity induced by the nonlocal and nonlinear terms appearing in Applications I--X, yielding polynomial decay rates that coincide perfectly with known (or conjectured) optimal bounds. Thus, our results may be viewed as the definitive nonlinear counterpart to the Borichev--Tomilov paradigm, extending the resolvent-to-decay philosophy to a vast class of degenerate evolution equations for which no linear spectral approach applies.

\medskip\noindent\textbf{Final remark.}
The nonlinear real-resolvent framework developed in this work provides a unified, highly robust mechanism to recover polynomial stability for a broad family of nonlinear evolution equations. While we focused on ten representative wave and parabolic models to illustrate the theory, the underlying methodology is versatile enough to be extended to more elaborate hyperbolic systems (such as Timoshenko or Bresse systems), mixed wave-plate structures, and complex state-dependent or memory-type feedback mechanisms. We believe that these directions lie naturally beyond the scope of the present article and are best pursued independently, using the present resolvent criterion as a guiding foundational principle for future developments.

\section{Final Conclusions}

\medskip
{\bf ~1. Summary of the Proposal: The Conceptual "Leap"}

\medskip

The central point of this article is the adaptation of the Borichev-Tomilov Theorem to the nonlinear context. In the linear world, this theorem is the "Rosetta Stone" for understanding polynomial decay, relating it to the growth of the resolvent on the imaginary axis.

Since nonlinear operators do not have a definable "spectrum" or "imaginary axis," we propose shifting the focus to the domain of regularization ($\lambda \to 0^+$ on the real axis). The idea is that the "blow-up" rate of the norm of the solution of the real resolvent equation ($\lambda x_\lambda + A(x_\lambda) \ni y$) reveals the effective nonlinear scaling of the system, which allows predicting the rate of energy decay.

\medskip
{\bf 2. Main Contributions}

\medskip
{\bf Nonlinear Tauberian Principle:}

\medskip
The article establishes a rigorous criterion (Theorems 3.2 and 3.3) that links the algebraic structure of nonlinearity (homogeneity) directly to the polynomial decay rate.

\medskip
{\bf Robustness and Applicability:}

\medskip

The theory is tested in a wide range of applications (ten in total), from the wave equation with nonlocal Kelvin-Voigt damping to the parabolic p-Laplacian.

\medskip
{\bf Weak Solutions:}

\medskip

A key differentiating factor is the method's ability to justify decay estimates for weak solutions, in situations where the classical "multiplier method" would fail by requiring a regularity that the system does not possess.

\medskip

{\bf Obstruction Diagnosis:}

\medskip

The article not only shows when decay occurs, but also uses the resolvent index to diagnose structural flaws in stabilization (as in the case of total internal degeneracy), which increases the conceptual rigor of the work.

\medskip

{\bf 3. Critical Opinion of the Authors}

\medskip

{\bf Theoretical Innovation:}

\medskip

The work proposes to transform a spectral analysis problem (impossible in the nonlinear case) into an asymptotic analysis problem of static equations. This simplifies the understanding of the dissipation mechanism: decay is not seen as a "magical" spectral property, but as a manifestation of nonlinear coercivity. \medskip

{\bf Practical Impact:}

\medskip

For researchers in the field of PDEs (Partial Differential Equations), this article provides a ready-to-use tool ("tool ready to handle real differential equations"). It avoids complex compactness arguments and focuses on the direct estimation of dissipation.

\medskip

{\bf Complexity and Limitations:}

\medskip

Although the method is effective, {\bf \em it does not eliminate the need for specific analysis of each PDE}. In cases where the "geometry" and "dissipation" are not aligned (dissipative misalignment), the resolvent detects the degeneracy, but additional time-domain or geometric arguments are still needed to ensure stability.

\medskip

{\bf 4. Conclusion}

\medskip

We believe this article is a contribution to modern functional analysis. It unifies several scattered results in the literature under a single Nonlinear Tauberian Principle. It is a fundamental text for those who work with the stabilization of dissipative systems, offering a new way to "read" the physics of the problem through the behavior of the solver near the origin. In short: Our purpose is to create a mathematically rigorous tool with a clear vision of how to simplify complex problems through changes in technical perspective.

\section*{Acknowledgments}
The authors were supported by CNPq.



\begin{thebibliography}{99}

\bibitem{Ammari2020}
K. Ammari, F. Hassine, and L. Robbiano,
\textit{Stabilization for the wave equation with singular Kelvin-Voigt damping},
Arch. Ration. Mech. Anal. \textbf{236} (2020), no. 2, 577--601.

\bibitem{ExamplesandCounter} Astudillo, María; Cavalcanti, Marcelo M.; Faria, Josiane C. O.; Webler, Claudete M.  Asymptotic behavior for parabolic equations with interior degeneracy. Ex. Counterex. 2 (2022), Paper No. 100065, 4 pp.

\bibitem{Balakrishnan-Taylor} A.V. Balakrishnan, L.W. Taylor, Distributed parameter nonlinear damping models for flight structures, in: Proceedings Damping 89, Flight Dynamics Lab and Air Force Wright Aeronautical Labs, WPAFB, 1989.

\bibitem{BT2010} A. Borichev and Y. Tomilov, Optimal polynomial decay of functions and operator semigroups, Math. Ann. 347, 455–478 (2010). https://doi.org/10.1007/s00208-009-0439-0

\bibitem{Burq} Burq, N. "Contrôle de l'équation des ondes dans des ouverts peu réguliers." Asymptotic Analysis, vol. 13, no. 1, pp. 9–47, 1997.

\bibitem{Burq2} Burq, N. "Mesures semi-classiques et mesures de défaut." Séminaire Équations aux dérivées partielles (Polytechnique), 1996-1997, exp. no 5, pp. 1–31.

\bibitem{Burq-Gerard} Burq, N.; Gérard, P. "Condition n\'ecessaire et suffisante pour la contr\^olabilité exacte des ondes." Comptes Rendus de l'Acad\'emie des Sciences - Series I - Mathematics, vol. 325, no. 7, pp. 749–752, 1997.

\bibitem {Dafermos1970} C. M. Dafermos, "Asymptotic stability in viscoelasticity", Arch. Rational Mech. Anal., 37 (1970), 297-308.


\bibitem{SC2025} M. Cavalcanti, V. Domingos Cavalcanti, J. Soriano, Linear and Nonlinear Semigroups. Monograph Series of the Parana’s Mathematical Society. Monograph 06 (2023). 1-474, doi:10.5269/bspm.81163.

\bibitem{SICON2003} Cavalcanti, Marcelo Moreira; Oquendo, Higidio Portillo, Frictional versus viscoelastic damping in a semilinear wave equation. SIAM J. Control Optim. 42 (2003), no. 4, 1310-1324.


\bibitem{Cavalcanti2025} M. M. Cavalcanti, V. N. Domingos Cavalcanti, J. C. Oliveira Faria, C. A. Okawa, Asymptotic behaviour of the wave equation subject to a Kelvin-Voigt nonlocal damping, (2025) (pre-print).

\bibitem{Tebou} Louis Tebou. Portugaliae Mathematica, Vol. 55, Fasc. 3, pp. 293-306, 1998.

\bibitem{Conrad-Rao} Francis Conrad and Bopeng Rao. Decay of solutions of the wave equation in a star-shaped domain with nonlinear boundary feedback, Asymptotic Analysis 7 (1993), pp. 159–177.

\bibitem{Nakao} Nakao, M. "Convergence of solutions of the wave equation with a nonlinear dissipative term to the steady state." Memoirs of the Faculty of Science, Kyushu University. Series A, Mathematics, vol. 30, no. 2, pp. 257–265, 1976.

\bibitem{Nakao2} Nakao, M. "A difference inequality and its application to nonlinear evolution equations." Journal of the Mathematical Society of Japan, vol. 30, no. 4, pp. 747–762, 1978.

\end{thebibliography}
\end{document}